\title{Multidimensional Hahn polynomials, intertwining functions on the symmetric group and Clebsch-Gordan coefficients }
\author{Fabio Scarabotti}
 \newtheorem{definition}{Definition} [section]       
 \newtheorem{remark}[definition]{Remark}       
 \newtheorem{example}[definition]{Example}
 \newtheorem{proposition}[definition]{Proposition}       
 \newtheorem{theorem}[definition]{Theorem}       
 \newtheorem{corollary}[definition]{Corollary}       
  \newtheorem{lemma}[definition]{Lemma}
\begin{document}

\maketitle

\begin{abstract}  
We generalize a construction of Dunkl, obtaining a wide class intertwining functions on the symmetric group and a related family of multidimensional Hahn polynomials. Following a suggestion of Vilenkin and Klymik, we develop a tree-method approach for those intertwining functions. We also give a group theoretic proof of the relation between Hahn polynomials and Clebesh-Gordan coefficients, given analytically by Koornwinder and by Nikiforov, Smorodinski\u{i} and Suslov. Such relation is also extended to the multidimensional case.
\footnote{{\it AMS 2002 Math. Subj. Class.}Primary: 33C80. Secondary: 20C30, 33C45, 33C50, 81R05\\
\indent {\it Keywords:} Hahn polynomials, intertwining functions, tree method, symmetric group, special unitary group, Clebsch-Gordan coefficients, $3nj$-coefficients}
\end{abstract}

\section{Introduction}

One of the most fruitful method to study special functions is to use representation theory and harmonic analysis. In particular, discrete orthogonal polynomials can be treated by mean of the representation theory of finite group, and a lot of work in this direction was made by Delsarte, Dunkl and Stanton; see \cite{St1} for a useful survey. In \cite{Du3} Dunkl developed the framework of intertwining functions and in \cite{Du2} he used it to study Hahn polynomials. Indeed, the $S_{n-m}\times S_m-S_{n-h}\times S_h$-intertwining functions on the symmeric group are naturally expressed in terms of these polynomials. In \cite{Du5} he studied the $S_a\times S_b\times S_c-S_{n-m}\times S_m$ intertwining functions on the symmetric group $S_n$ (where $n=a+b+c$), expressing them in terms of a class of two dimensional Hahn polynomials. Those polynomials had been introduced by Karlin and McGregor in \cite{K-M}.\\

 The aim of the present paper is to extend Dunkl's results and to study the $S_{a_1}\times S_{a_2}\times\dotsb\times S_{a_h}-S_{n-m}\times S_m$-intertwining functions on $S_n$ ($a_1+a_2+\dotsb +a_h=n$). Following a suggestion of  Vilenkin and Klymik (see \cite{K-V}, p. 505), we develop the tree method for those intertwining functions and for the related Hahn polynomials. The key ingredient is an explicit Littlewood-Richardson rule for the irreducible representations of the symmetric group associated to two rows Young tableaux. The resulting Hahn polynomials are more general than those in \cite{K-M}, that correspond to a special choice of the tree. It is well known that Hahn polynomials are related to the Clebsch-Gordan coefficients for $SU(2)$; see \cite{Koornwinder,NikSus,NSU,SS}. We give a group theoretic approach to this relation and extend it to the multidimensional case. Indeed, the relation between Clebsch-Gordan coefficients for $SU(2)$ and the representation theory of the symmetric group was investigated in \cite{Jucys,Sullivan1,Sullivan2}, but in those references no connection is found with Dunkl's intertwining functions and Hahn polynomials (and with the analytical formulas in \cite{Koornwinder,NikSus,NSU,SS}). 
In view of these results, we do not compute connection coefficients between different bases of intertwining functions (this is the main problem in every tree method): they can be obtained by the equivalent theory on $SU(2)$.
 
The plan of the paper is the following. In section 2, we review the Hahn polynomials (using Dunkl's renormalized notation) and the Gel'fand pair $(S_n,S_{n-m}\times S_m)$ (also called the Johnson scheme). Then we give an explicit Littelwod.Richardson rule for the spherical representation of the Johnson scheme. This rule is expressed in terms of Hahn polynomials and Radon transforms and our formulas generalize those for the intertwining functions in \cite{Du2}. In section 3, we develop the tree method for the multidimensional Hahn polynomials and obtain a basis for the $S_{a_1}\times S_{a_2}\times\dotsb\times S_{a_h}-S_{n-m}\times S_m$-intertwining functions on the symmetric group. 
We also give a basis for the $S_{n-m}\times S_m-S_{a_1}\times S_{a_2}\times\dotsb\times S_{a_h}$-intertwining functions, that are more directly related to the Clebsch-Gordan coefficients. Though these functions may be obtained by the simple change of variable $g\rightarrow g^{-1}$, their study requires the use of the theory of induced representations. As a by-product, we give a group theoretic proof of the Regge's symmetries for the Hahn polynomials. In section four we introduce Clebsch-Gordan coefficients and show their relation with the intertwining functions on the symmetric group. We make use of the Schur-Weyl duality in the form developed by James in \cite{Ja2}, which is quite suitable for our purposes. First we analyzed the case of the tensor product of two irreducible $SU(2)$-representations, connecting it with the theory of $S_{n-m}\times S_m-S_{n-h}\times S_h$-intertwining functions on $S_n$. Then, in the last subsection, we show that the tree method for the intertwining functions on the symmetric group is equivalent to the tree method for the Wigner $3nj$-coefficients for $SU(2)$.

Another generalization of the results in \cite{Du5} is in \cite{ScarabottiSabc}, where an orthogonal basis for the $S_a\times S_b\times S_c$-invariant functions in the irreducible representation $S^{\alpha,\beta,\gamma}$ of $S_N$ (where $N=a+b+c=\alpha+\beta+\gamma$) is obtained.

\section{Hahn polynomials and the Johnson scheme}

\subsection{Hahn polynomials}\label{df}

We recall the basic properties of the one dimensional Hahn polynomials, using Dunkl's renormalized notation; \cite {Du2,Du5,ScarabottiSabc}.
For $m,a,b,c,x$ integers satisfying:

\begin{equation}\label{cab}
0\leq c \leq a+b,\quad 0\leq m\leq \min\{a,b,c,a+b-c\} \quad\text{ and }\quad \max\{c-b,0\}\leq x \leq\min\{a,c\},
\end{equation} 

they are given by the formula:

\[
E_m(a,b,c,x)=\sum_{j=\max\{0,x-c+m\}}^{\min\{m,x\}} 
(-1)^j\binom{m}{j}(b-m+1)_j(-x)_j(a-m+1)_{m-j}(x-c)_{m-j},
\]

where $(x)_k=x(x+1)(x+2)\dotsb(x+k-1)$.
Now we list some of their properties.

Transformation formula:

\begin{equation}\label{form7}
\sum_{x=\max\{0,c-d+y\}}^{\min\{c,y\}}\binom{y}{x}\binom{d-y}{c-x}E_m(a,b,c,x) = 
\binom{d-m}{c-m}E_m(a,b,d,y).
\end{equation}

Orthogonality relations:

\begin{equation}\label{orthrel}
\begin{split}
\sum_{x=\min\{0,c-b\}}^{\max\{a,c\}}&\binom{a}{x}\binom{b}{c-x}E_m(a,b,c,x)E_n(a,b,c,x)=\delta_{nm}\binom{a+b}{c}\binom{a+b}{m}^{-1}\times\\
&\times\frac{a+b-m+1}{a+b-2m+1}(a-m+1)_m(b-m+1)_m(c-m+1)_m(a+b-c-m+1)_m
\end{split}
\end{equation}

Particular values: for $c=m$ we have  

\begin{equation}\label{form9}
E_m(a,b,m,x)=(-1)^{m-x}m!(a-m+1)_{m-x}(b-m+1)_x.
\end{equation}

For the difference relations, we refer to \cite{Du2}; we will need the following particular case of (3.10) of \cite{Du2}: for $m=c+1$ we have:

\begin{equation}\label{form10}
(a-x)E_m(a,b,m,x+1) + (x+b-m+1) E_m(a,b,m,x) =0,
\end{equation}

as can be checked directly from \eqref{form9}.

\subsection{The Johnson scheme.}\label{reprad}

Fix a positive integer $n$ and denote by $S_n$ the group of all permutations of the set $\{1,2,\dotsc,n\}$ (the symmetric group). For $1\leq m\leq n$, denote by $\Omega_m$ the space of all $m$-subsets of $\{1,2,\dotsc,n\}$; $S_n$ acts on $\Omega_m$ and $\Omega_m\cong S_n/(S_{n-m}\times S_m)$, as homogeneous space. Following the standard notation in the representation theory of $S_n$, denote by $M^{n-m,m}$ the permutation module of all complex valued functions defined on $\Omega_m$.
The space $M^{n-m,m}$ is endowed with the natural scalar product
$\langle f_1,f_2\rangle = \sum_{\omega\in \Omega_a}f_1(\omega)\overline{f_2(\omega)}$,
for $f_1,f_2\in M^{n-m,m}$. For $A\in\Omega_m$, $\delta_A$ will indicate the Dirac function centered at $A$, that is $\delta_A(B)=1$ if $A=B$, $\delta_A(B)=0$ if $A\neq B$.
Now we introduce the Radon transforms $d,d^*$: if $A\in\Omega_m$, we set

\[
d\delta_A=\sum_{x\in A}\delta_
{A\setminus\{x\}}\qquad\quad\text{and}\text\qquad d^*\delta_A=\sum_{x\notin A}\delta_
{A\cup\{x\}}.
\]

We also introduce a particular notation for the powers of $d^*$:

\[
R_q=\frac{(d^*)^q}{q!},\qquad\qquad\text{that is}\qquad 
R_q\delta_A=
\sum_{\substack{B\in \Omega_{m+q}:\\ A\subset B}}\delta_B.
\]

The operator $d$ intertwines the permutation modules $M^{n-m,m}$ and 
$M^{n-m+1,m-1}$ and $d^*$ is the adjoint of $d$. Clearly, $d,d^*,R_q$ may be defined on each $M^{n-m,m}$, for any the value of $n$ and $m$; to simplify notation, we will not indicate the space on which they are acting. 
In the following lemma, we collect some basic properties of the operators $d,d^*,R_q$ \cite{CST1,CST3,Du1,Du2,ScarabottiSabc}. 

\begin{lemma}\label{opd}
If $f\in M^{n-m,m}$ then
\begin{enumerate}
\item\label{opd2} if $1 \leq q\leq n-m$  then  
$dR_qf = R_qdf + 
(n-2m - q + 1)R_{q - 1}f$;
\item\label{opd3} if $1 \leq p \leq q \leq n-m$  and  
$df = 0$  then  
$\left(d\right)^pR_qf = (n-2m - q + 1)_pR_{q-p}f$.

\end{enumerate}
\end{lemma}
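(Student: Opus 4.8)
The plan is to reduce everything to a single commutation relation between $d$ and $d^*$, then bootstrap to the powers appearing in $R_q$. First I would compute $dd^*$ and $d^*d$ on a basis vector $\delta_A$ with $A\in\Omega_m$. A direct count of which $(m\pm 1)$-sets are produced gives $dd^*\delta_A=(n-m)\delta_A+\sum_{x\notin A}\sum_{y\in A}\delta_{(A\setminus\{y\})\cup\{x\}}$ and $d^*d\delta_A=m\delta_A+\sum_{y\in A}\sum_{x\notin A}\delta_{(A\setminus\{y\})\cup\{x\}}$; the double sums coincide, so on $M^{n-m,m}$ one obtains the clean relation
\[
dd^*-d^*d=(n-2m)\,\mathrm{Id}.
\]
This is the $\mathfrak{sl}_2$-type relation underlying the whole statement, and it is the only genuine computation needed.

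Next I would commute $d$ past a power $(d^*)^q$. The subtle point --- and the one place to be careful --- is that $d^*$ raises the index $m$ to $m+1$, so the constant $n-2m$ in the commutator changes at each level. Writing the relation on $M^{n-m,m}$ as $d(d^*)^q=(d^*)^qd+q(n-2m-q+1)(d^*)^{q-1}$, I would prove it by induction on $q$: applying the inductive hypothesis one level up (where the constant is $n-2(m+1)=n-2m-2$) and then using the base commutator $dd^*=d^*d+(n-2m)\,\mathrm{Id}$, the two emerging coefficients combine as $(n-2m)+q(n-2m-q-1)=(q+1)(n-2m-q)$, which is exactly the coefficient predicted for $q+1$. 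Dividing through by $q!$ and recalling $R_q=(d^*)^q/q!$ turns $q(n-2m-q+1)(d^*)^{q-1}/q!$ into $(n-2m-q+1)R_{q-1}$, which is part \ref{opd2}.

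Finally, for part \ref{opd3} I would iterate part \ref{opd2} under the hypothesis $df=0$. Since $df=0$, part \ref{opd2} collapses to $dR_qf=(n-2m-q+1)R_{q-1}f$, and I would prove $d^pR_qf=(n-2m-q+1)_pR_{q-p}f$ by induction on $p$. Assuming the statement for $p-1$, I apply $d$ once more and invoke part \ref{opd2} for $R_{q-p+1}f$ (again with $df=0$), producing the extra factor $n-2m-q+p$; since $(n-2m-q+1)_{p-1}\,(n-2m-q+p)=(n-2m-q+1)_p$, the Pochhammer symbol telescopes exactly as required. The hypotheses $1\le q\le n-m$ and $1\le p\le q$ simply guarantee that every intermediate $R_{q-j}$ lands on a nonnegative index where the operators are defined. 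I expect the only real obstacle to be the level-dependent bookkeeping in the inductive step of part \ref{opd2}; once that coefficient identity is pinned down, both statements follow mechanically.
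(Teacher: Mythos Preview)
Your argument is correct. The paper itself does not prove this lemma: it merely cites \cite{CST1,CST3,Du1,Du2,ScarabottiSabc} and states the identities. The approach you outline---computing the commutator $dd^*-d^*d=(n-2m)\,\mathrm{Id}$ on $M^{n-m,m}$, then inducting on $q$ with the level-shift $m\mapsto m+1$ to get $d(d^*)^q=(d^*)^qd+q(n-2m-q+1)(d^*)^{q-1}$, and finally iterating under $df=0$---is exactly the standard $\mathfrak{sl}_2$-type proof found in those references (see e.g.\ \cite{Du2}, Section~2, or \cite{CST3}). Your coefficient check $(n-2m)+q(n-2m-q-1)=(q+1)(n-2m-q)$ and the Pochhammer telescoping are both accurate, so nothing is missing.
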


In the following Theorem, we give the decomposition of $M^{n-h,h}$ into irreducible $S_n$-representations \cite{CST1,CST3,Du1,Du2}.
 
\begin{theorem}\label{corradtr}
For $0\leq k\leq n/2$, set $S^{n-k,k}=M^{n-k,k}\cap\mbox{\rm Ker}d$.
\begin{enumerate}

\item\label{corradtr0}

$S^{n-k,k}$ is an irreducible $S_n$ representation and its dimension is equal to $\binom{n}{k}-\binom{n}{k-1}$.

\item\label{corradtr1} If $0\leq m\leq n$,  $0\leq k\leq \min\{n-m,m\}$ and $f_1,f_2\in S^{n-k,k}$, then 

\[
\langle R_{m-k}f_1,R_{m-k}f_2\rangle_{M^{n-m,m}}=\binom{n-2k}{m-k}\langle f_1,f_2\rangle_{M^{n-k,k}}
\]

and therefore $R_{m-k}$ is injective from  
$S^{n-k,k}$ to $M^{n-m,m}$;

\item\label{corradtr2} 
\[
M^{n-m,m}=\bigoplus _{k=0}^{\min\{n-m,m\}} R_{m-k}
S^{n-k,k}
\]
is the decomposition of  $M^{n-m,m}$  into 
irreducible $S_n$-representations;

\end{enumerate}
\end{theorem}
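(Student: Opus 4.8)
The whole proof hinges on Lemma~\ref{opd}. Taking $q=1$ in its first part (so $R_1=d^*$ and $R_0=I$) gives the key commutation relation
\[
dd^*=d^*d+(n-2m)I\qquad\text{on }M^{n-m,m},
\]
and I would organize the argument as (2), then (3), then (1), since the norm formula of (2) drives everything else.

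\emph{Part (2).} Since $d^*$ is the adjoint of $d$ and $R_q=(d^*)^q/q!$, the adjoint of $R_{m-k}$ is $d^{m-k}/(m-k)!$. For $f_2\in S^{n-k,k}$ we have $df_2=0$, so the second part of Lemma~\ref{opd}, applied with module index $k$ and $p=q=m-k$, yields $d^{m-k}R_{m-k}f_2=(n-k-m+1)_{m-k}f_2$. Dividing by $(m-k)!$ and using the identity $(n-k-m+1)_{m-k}/(m-k)!=\binom{n-2k}{m-k}$, I obtain $R_{m-k}^{*}R_{m-k}f_2=\binom{n-2k}{m-k}f_2$, which is exactly the asserted norm identity after pairing against $f_1$. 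In the stated range $0\le m-k\le n-2k$, so the scalar is a positive integer and $R_{m-k}$ is injective.

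\emph{Part (3).} The images $R_{m-k}S^{n-k,k}$ are $S_n$-submodules because $d^{*}$ intertwines the permutation modules. They are pairwise orthogonal: for $k<k'$ and $f\in S^{n-k,k}$, $g\in S^{n-k',k'}$, moving the adjoint of $R_{m-k'}$ across and applying Lemma~\ref{opd}(2) expresses $\langle R_{m-k}f,R_{m-k'}g\rangle$ as a multiple of $\langle R_{k'-k}f,g\rangle=\tfrac{1}{(k'-k)!}\langle f,d^{k'-k}g\rangle$, which vanishes since $dg=0$ and $k'-k\ge1$. To see the orthogonal sum fills $M^{n-m,m}$ I compare dimensions. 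The commutation relation gives $\|d^{*}h\|^{2}=\|dh\|^{2}+(n-2m)\|h\|^{2}$, so $d^{*}$ is injective on $M^{n-m,m}$ whenever $m<n/2$; equivalently $d\colon M^{n-k,k}\to M^{n-k+1,k-1}$ is surjective for $1\le k\le n/2$, and rank--nullity then gives $\dim S^{n-k,k}=\binom{n}{k}-\binom{n}{k-1}$. Summing this telescoping quantity over $0\le k\le\min\{m,n-m\}$ yields $\binom{n}{\min\{m,n-m\}}=\binom{n}{m}=\dim M^{n-m,m}$, so the pairwise orthogonal submodules exhaust the space.

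\emph{Part (1).} The dimension formula is already proved, so only irreducibility remains, and this is the real obstacle: the $d$-operator formalism produces the decomposition and the dimensions for free, but irreducibility needs an external input. I would count invariants: $\dim\mathrm{End}_{S_n}(M^{n-m,m})$ equals the number of $S_n$-orbits on $\Omega_m\times\Omega_m$, i.e. the number of admissible values of $|A\cap B|$, which is $\min\{m,n-m\}+1$ (this is the Gelfand-pair content of the Johnson scheme). On the other hand $\dim\mathrm{End}_{S_n}(M^{n-m,m})=\sum_i c_i^{2}$, where the $c_i\ge1$ are the multiplicities of the distinct irreducible constituents, while from Part~(3) the module is an orthogonal sum of exactly $\min\{m,n-m\}+1$ nonzero submodules, so $\sum_i c_i\ge\min\{m,n-m\}+1$. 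Since $c_i\ge1$ forces $\sum_i c_i^{2}\ge\sum_i c_i$, all these quantities coincide, every $c_i=1$, and there are precisely $\min\{m,n-m\}+1$ distinct constituents. Having that many distinct irreducibles distributed among the same number of nonzero orthogonal submodules forces each $R_{m-k}S^{n-k,k}$ to be a single irreducible; taking $m=k$ shows $S^{n-k,k}$ itself is irreducible.
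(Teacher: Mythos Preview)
The paper does not actually supply a proof of this theorem: it is stated with references \cite{CST1,CST3,Du1,Du2} and then used throughout, so there is no paper-side argument to compare against. Your proof, however, is correct and self-contained, and in fact follows the standard route taken in those references.

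A few remarks on the execution. Part~(2) is clean: recognizing $R_{m-k}^{*}=d^{m-k}/(m-k)!$ and applying Lemma~\ref{opd}(\ref{opd3}) with $p=q=m-k$ is exactly the intended use of that lemma. In Part~(3), the orthogonality computation is right; spelled out, for $k<k'$ one gets
\[
\langle R_{m-k}f,R_{m-k'}g\rangle=\frac{(n-k-m+1)_{m-k'}}{(m-k')!}\langle R_{k'-k}f,g\rangle=\frac{(n-k-m+1)_{m-k'}}{(m-k')!\,(k'-k)!}\langle f,d^{k'-k}g\rangle=0,
\]
so your compressed sentence is accurate. The dimension count via surjectivity of $d$ (equivalently injectivity of $d^{*}$ when $m<n/2$) is the classical argument.

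Your handling of Part~(1) is the interesting step. The paper's references typically establish irreducibility either via the Gelfand-pair structure of the Johnson scheme or via Specht-module machinery; your argument is the former, phrased efficiently through the endomorphism-algebra count $\dim\mathrm{End}_{S_n}(M^{n-m,m})=\min\{m,n-m\}+1$. The inequality chain $\sum c_i^2\ge\sum c_i\ge\min\{m,n-m\}+1$ does force all $c_i=1$ and the number of distinct irreducibles to equal the number of orthogonal summands, and in a multiplicity-free module every submodule is a sum of isotypic pieces, so pigeonhole finishes it. This is a legitimate and pleasantly short way to close the argument.
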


\subsection{An explicit Littlewood-Richardson rule for $S^{n-k,k}$}\label{secLR}

In this section, we study the restriction $\text{Res}^{S_n}_{S_{n-h}\times S_h}S^{n-k,k}$. We introduce the following notation: if $X\subseteq\{1,2,\dotsc,n\}$, $\lvert X\rvert =t$ and $0\leq j\leq t$, then $M^{t-j,j}(X)$ and $S^{t-j,j}(X)$ denote the spaces described in the preceding section constructed by mean of the $j$-subsets of $X$. Set $A=\{1,2,\dotsc,h\}$, $B=A^C\equiv\{h+1,h+2\dotsc,n\}$ and suppose that $S_{n-h}\times S_h$ is the stabilizer of $A$. 

First note that the map $\Omega_s(A)\times\Omega_t(B)\ni(X,Y)\mapsto X\cup Y$ identifies $\Omega_s(A)\times\Omega_t(B)$ with $\{Z\in\Omega_{s+t}:\lvert Z\cap A\rvert=s,\lvert Z\cap B\rvert=t\}$. Moreover, $\Omega_k=\coprod\limits_{l=\max\{0,k-h\}}^{\min\{k,n-h\}}\left[\Omega_{k-l}(A)\times\Omega_{l}(B)\right]$ is the decomposition of $\Omega_k$ into $S_{n-h}\times S_h$-orbits (in the present paper, $\coprod$ denotes a disjoint union). Form this we immediately get a preliminary decomposition:

\begin{equation}\label{SnSnh}
\text{Res}^{S_n}_{S_{n-h}\times S_h}M^{n-k,k}=\bigoplus_{l=\max\{0,k-h\}}^{\min\{k,n-h\}}M^{h-k+l,k-l}(A)\otimes M^{n-h-l,l}(B).
\end{equation}

Note that, in our notation, $\delta_X\otimes\delta_Y=\delta_{X\cup Y}$. Moreover, it
is easy to see that if $\phi\otimes\psi\in M^{n-k+l,k-l}(A)\otimes M^{n-h-l,l}(B)$, then

\begin{equation}\label{dprod}
d[\phi\otimes \psi]=(d\phi)\otimes \psi+\phi\otimes(d\psi);
\end{equation}

in the notation of \cite{Du2}, this is just the usual differentiation rule.
Moreover, \ref{corradtr2} in Theorem \ref{corradtr} ensures us that 
$M^{h-k+l,k-l}(A)\otimes M^{n-h-l,l}(B)$ contains a subspace isomorphic to  $S^{h-i,i}\otimes S^{n-h-j,j}$ if and only if $\max\{i-h+k,j\}\leq l\leq \min\{k-i,n-h-j\}$. In the following Lemma and the subsequent Corollary, we give a characterization of the subspace of $\text{Res}^{S_n}_{S_{n-h}\times S_h}S^{n-k,k}$ isomorphic to $S^{h-i,i}\otimes S^{n-h-j,j}$.

\begin{lemma} Suppose that $0\leq i\leq h/2$, $0\leq j\leq (n-h)/2$ and that
$\phi\in S^{h-i,i}(A)$, $\psi\in S^{n-h-j,j}(B)$. Then the equation (in the unknown coefficients $\alpha_l$'s)

\begin{equation}\label{eqricor}
d\sum_{l=\max\{i-h+k,j\}}^{\min\{k-i,n-h-j\}}\alpha_l(R_{k-l-i}\phi)\otimes(R_{l-j}\psi)=0
\end{equation}

has a nontrivial solution if and only if $k-h\leq j-i\leq n-h-k$ and $i+j\leq k$. If these conditions are satisfied, then all the solutions are obtained by setting $\alpha_l=\alpha E_{k-i-j}(n-h-2j,h-2i,k-i-j,l-j)$, $l=j,i+1,\dotsc,k-i$, $\alpha$ arbitrary constant.

\end{lemma}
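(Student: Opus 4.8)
The plan is to apply the lowering operator $d$ to the left-hand side of \eqref{eqricor}, reduce the condition $d(\cdots)=0$ to a two-term recurrence in the unknowns $\alpha_l$, and recognize that recurrence as the one satisfied by the Hahn polynomials in the special case $c=m$. First I would expand each summand using the product rule \eqref{dprod}, writing $d\bigl[(R_{k-l-i}\phi)\otimes(R_{l-j}\psi)\bigr]=(dR_{k-l-i}\phi)\otimes(R_{l-j}\psi)+(R_{k-l-i}\phi)\otimes(dR_{l-j}\psi)$. Since $\phi\in S^{h-i,i}(A)$ and $\psi\in S^{n-h-j,j}(B)$ both lie in $\ker d$ (on $A$ and on $B$ respectively), Lemma \ref{opd}, part \ref{opd2} collapses to $dR_q\phi=(h-2i-q+1)R_{q-1}\phi$ and $dR_q\psi=((n-h)-2j-q+1)R_{q-1}\psi$, giving
\[
d\bigl[(R_{k-l-i}\phi)\otimes(R_{l-j}\psi)\bigr]=(h-i-k+l+1)(R_{k-l-i-1}\phi)\otimes(R_{l-j}\psi)+(n-h-j-l+1)(R_{k-l-i}\phi)\otimes(R_{l-j-1}\psi).
\]

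Next I would collect, in $d(\text{LHS})$, the coefficient of each vector $(R_{k-l'-i-1}\phi)\otimes(R_{l'-j}\psi)$, which receives a contribution from the first term with $l=l'$ and from the second term with $l=l'+1$, yielding $(l'+h-i-k+1)\,\alpha_{l'}+(n-h-j-l')\,\alpha_{l'+1}$. The vectors $(R_{k-l'-i-1}\phi)\otimes(R_{l'-j}\psi)$ for distinct $l'$ lie in distinct summands of the decomposition \eqref{SnSnh} applied to $M^{n-k+1,k-1}$, and each is nonzero by the injectivity of $R_q$ in Theorem \ref{corradtr}, part \ref{corradtr1}; hence they are linearly independent and $d(\text{LHS})=0$ if and only if every coefficient vanishes. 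Setting $x=l-j$ and $a=n-h-2j$, $b=h-2i$, $m=k-i-j$ (so that $c=m$), the two coefficients become $x+b-m+1$ and $a-x$, and the vanishing condition is exactly \eqref{form10}. Therefore $\alpha_l=\alpha\,E_{k-i-j}(n-h-2j,h-2i,k-i-j,l-j)$ solves the recurrence, and by the explicit value \eqref{form9} this solution is not identically zero.

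It then remains to decide exactly when a nontrivial solution exists. Under the stated hypotheses $k-h\le j-i\le n-h-k$ and $i+j\le k$ one checks that the summation range collapses to $l_{\min}=j$ and $l_{\max}=k-i$, so the $k-i-j+1$ unknowns $\alpha_j,\dots,\alpha_{k-i}$ are linked by the $k-i-j$ interior equations into a single bidiagonal chain with no missing neighbour at either end; since the propagation coefficients $a-x$ are strictly positive throughout the range (because $a\ge m$ is equivalent to $j-i\le n-h-k$), the solution space is one-dimensional and spanned by the Hahn vector, which gives both existence and the claimed uniqueness up to the scalar $\alpha$.

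The hard part will be the converse. When $i+j>k$ the range is empty and there is nothing to solve, so the interesting failures are $j-i>n-h-k$ or $j-i<k-h$. In either case the range is shifted, and the extremal equation (at $l'=l_{\max}$ or at $l'=l_{\min}-1$) is missing one of its two neighbours, so the surviving coefficient multiplies the extremal unknown alone. I would compute that this surviving coefficient equals $n-i-j-k+1$ in both cases, and that it is strictly positive since $k\le n/2$ and $i+j\le h/2+(n-h)/2=n/2$; this forces the extremal $\alpha$ to vanish, after which the recurrence propagates the zero along the whole chain and leaves only the trivial solution. The delicate point, and the main obstacle, is to verify that the propagation never stalls at the few degenerate extremal parameter values where a propagation coefficient also vanishes; these are isolated boundary cases ($k=n/2$ with $i+j=n/2$) that I would dispose of by a direct check. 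As a clean alternative for the one-dimensionality, one may note that each solution $\vec\alpha$ defines an $S_{n-h}\times S_h$-equivariant map $\phi\otimes\psi\mapsto\sum_l\alpha_l(R_{k-l-i}\phi)\otimes(R_{l-j}\psi)$ with image in $\ker d=S^{n-k,k}$, so by Schur's lemma the solution space has dimension at most one; only the determination of when it is nonzero then requires the boundary bookkeeping above.
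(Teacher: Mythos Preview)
Your proof is correct and mirrors the paper's: both expand via the product rule \eqref{dprod} and Lemma \ref{opd}, obtain the two-term recurrence $\alpha_l(h-k-i+l+1)+\alpha_{l+1}(n-h-j-l)=0$, identify it with \eqref{form10}, and then check case-by-case on the summation limits that outside the stated parameter range an unmatched boundary term (with coefficient $n-i-j-k+1>0$) forces the trivial solution. Your worry about the degenerate case $k=n/2$, $i+j=n/2$ is unfounded---those parameters force $i=h/2$, $j=(n-h)/2$, whence $j-i=k-h=n-h-k$, so they lie on the boundary of the \emph{good} region, not in a failure case---and your Schur's-lemma alternative for one-dimensionality is a nice extra not present in the paper.
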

\begin{proof}
It is just an application of \eqref{dprod} and Lemma \ref{opd}.
We can divide the study of \eqref{eqricor} in four cases, according with the values of the limits in the sum.\\

In the first case we take $i-h+k\leq j$ and $k-i\leq n-h-j$, that is $k-h\leq j-i\leq n-h-k$; clearly, we must also have $i+j\leq k$. Now

\begin{multline}\label{dalphaphipsi1}
d\sum_{l=j}^{k-i}\alpha_l(R_{k-l-i}\phi)\otimes(R_{l-j}\psi)\\
=\sum_{l=j}^{k-i-1}\alpha_l(h-k-i+l+1)(R_{k-l-i-1}\phi)\otimes(R_{l-j}\psi)+\sum_{l=j+1}^{k-i}\alpha_l(n-h-j-l+1)(R_{k-l-i}\phi)\otimes(R_{l-j-1}\psi)\\
=\sum_{l=j}^{k-i-1}[\alpha_l(h-k-i+l+1)+\alpha_{l+1}(n-h-j-l)](R_{k-l-i-1}\phi)\otimes(R_{l-j}\psi).\\
\end{multline}

Therefore \eqref{eqricor} is satisfied if and only if

\[
\alpha_l(h-k-i+l+1)+\alpha_{l+1}(n-h-j-l)=0,\qquad l=j,j+1,\dotsc,k-i-1,
\]

and this is solved by setting $\alpha_l=E_{k-i-j}(n-h-2j,h-2i,k-i-j,l-j)$ (see \eqref{form10}). \\

In the second case, we take $j <i-h+k$ and $k-i\leq n-h-j$. Now we have

\begin{multline}\label{dalphaphipsi2}
d\sum_{l=i-h+k}^{k-i}\alpha_l
(R_{k-l-i}\phi\otimes R_{l-j}\psi)\\
=\sum_{l=i-h+k}^{k-i-1}\alpha_l(h-k-i+l+1)(R_{k-l-i-1}\phi)\otimes(R_{l-j}\psi)+
\sum_{i-h+k}^{k-i}\alpha_l(n-h-j-l+1)(R_{k-l-i}\phi)\otimes(R_{l-j-1}\psi)\\
=\sum_{i-h+k}^{k-i-1}[\alpha_l(h-k-i+l+1)+\alpha_{l+1}(n-h-j-l)](R_{k-l-i-1}\phi)\otimes(R_{l-j}\psi)\\
+\alpha_{i-h+k}(n+k-i-j+1)(R_h\phi)\otimes(R_{i-h+k-j-1}\psi)
\end{multline}

and this is again equivalent to $\alpha_l(h-k-i+l+1)+\alpha_{l+1}(n-h-j-l)=0$, but now we have the extra condition $\alpha_{i-h+k}=0$, and therefore we have only the trivial solution.

In the same way,
it easy to show that also if we take $j\geq i-h+k$ and $n-h-j<k-i$, or $j< i-h+k$ and $n-h-j<k-i$, we have only the trivial solution. 

\end{proof}

An immediate consequence is an explicit form of Littlewood-Richardson rule for $S^{n-k,k}$  (see \cite{Ja2} for the general rule).

\begin{corollary}\label{LR}
The multiplicity of $S^{h-i,i}\otimes S^{n-h-j,j}$ in  $\text{Res}^{S_n}_{S_{n-h}\times S_h}S^{n-k,k}$ is equal to 1 if $k-h\leq j-i\leq n-h-k$ and $i+j\leq k$; otherwise it is equal to 0. 

In particular,

\[
\text{Res}^{S_n}_{S_{n-h}\times S_h}S^{n-k,k}=\bigoplus S^{h-i,i}\otimes S^{n-h-j,j}
\]

where the sum is over all $i,j$ satisfying the above conditions (and the decomposition is multiplicity free).
Moreover, the map 

\[
\phi\otimes\psi\quad\longmapsto\quad\sum_{l=j}^{k-i}E_{k-i-j}(n-h-2j,h-2i,k-i-j,l-j) (R_{k-l-i}\phi)\otimes (R_{l-j}\psi)
\]

is an explicit immersion of $S^{h-i,i}\otimes S^{n-h-j,j}$ into $\text{Res}^{S_n}_{S_{n-h}\times S_h}S^{n-k,k}$.

\end{corollary}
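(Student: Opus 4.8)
The plan is to read off the Corollary from the preceding Lemma together with the preliminary decomposition \eqref{SnSnh} and Theorem \ref{corradtr}, by interpreting the solution space of \eqref{eqricor} as a space of intertwining operators. Since $S^{h-i,i}(A)$ is an irreducible $S_h$-module and $S^{n-h-j,j}(B)$ is an irreducible $S_{n-h}$-module, their external tensor product $S^{h-i,i}\otimes S^{n-h-j,j}$ is an irreducible $S_{n-h}\times S_h$-module. Hence, by Schur's lemma, the multiplicity to be computed equals $\dim\mathrm{Hom}_{S_{n-h}\times S_h}\bigl(S^{h-i,i}\otimes S^{n-h-j,j},\ \mathrm{Res}^{S_n}_{S_{n-h}\times S_h}S^{n-k,k}\bigr)$, and the whole problem reduces to computing this Hom space.

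First I would describe the intertwiners landing in the larger module $\mathrm{Res}\,M^{n-k,k}$. Combining \eqref{SnSnh} with part \ref{corradtr2} of Theorem \ref{corradtr}, applied separately inside $A$ and inside $B$, each summand $M^{h-k+l,k-l}(A)\otimes M^{n-h-l,l}(B)$ contains exactly one copy of $S^{h-i,i}\otimes S^{n-h-j,j}$ (for $l$ in the admissible range), realized by $\phi\otimes\psi\mapsto (R_{k-l-i}\phi)\otimes(R_{l-j}\psi)$; these maps are injective by part \ref{corradtr1}, and, landing in distinct $l$-indexed summands, they are linearly independent. Thus a general intertwiner into $\mathrm{Res}\,M^{n-k,k}$ is the operator $T_\alpha\colon\phi\otimes\psi\mapsto\sum_l\alpha_l(R_{k-l-i}\phi)\otimes(R_{l-j}\psi)$, and its image lies in $S^{n-k,k}=M^{n-k,k}\cap\mathrm{Ker}\,d$ exactly when $d\,T_\alpha=0$, i.e. exactly when the $\alpha_l$ solve \eqref{eqricor}.

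At this point the Lemma does the real work: it identifies the solution space of \eqref{eqricor}, which its proof shows to be governed by a recurrence with purely numerical coefficients (and hence independent of the chosen nonzero $\phi,\psi$), as one-dimensional when $k-h\leq j-i\leq n-h-k$ and $i+j\leq k$, and zero otherwise. Translating back, $\dim\mathrm{Hom}_{S_{n-h}\times S_h}(S^{h-i,i}\otimes S^{n-h-j,j},\mathrm{Res}\,S^{n-k,k})$ is $1$ under these conditions and $0$ otherwise, which is precisely the asserted multiplicity. The explicit immersion is then obtained simply by substituting the distinguished solution $\alpha_l=E_{k-i-j}(n-h-2j,h-2i,k-i-j,l-j)$ supplied by the Lemma into $T_\alpha$.

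Finally, to upgrade the multiplicities into the stated direct sum decomposition, I would observe that $\mathrm{Res}\,S^{n-k,k}$ is a submodule of $\mathrm{Res}\,M^{n-k,k}$, whose constituents are, by \eqref{SnSnh} and Theorem \ref{corradtr}, exactly the two-row external products $S^{h-i,i}\otimes S^{n-h-j,j}$; so the constituents computed above exhaust the restriction, and summing the multiplicity-free copies yields the displayed decomposition. The one delicate point is the bookkeeping in the second paragraph, namely verifying that the $l$-indexed embeddings really form a basis of the Hom space into $\mathrm{Res}\,M^{n-k,k}$, so that imposing $d\,T_\alpha=0$ computes the multiplicity inside $S^{n-k,k}$ and not merely inside $M^{n-k,k}$; once this identification is secured, everything reduces to the Lemma.
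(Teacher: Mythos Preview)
Your argument is correct and follows essentially the same route as the paper: the paper's one-sentence proof simply records that, by \eqref{SnSnh}, only constituents of the form $S^{h-i,i}\otimes S^{n-h-j,j}$ can occur, tacitly leaving the multiplicity count and the explicit immersion to the preceding Lemma. You have spelled out precisely the Schur's-lemma/Hom-space bookkeeping that the paper takes for granted, so your version is a fleshed-out form of the same proof rather than a different one.
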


\begin{proof}
Just note that by \eqref{SnSnh}, only representations of the form $S^{h-i,i}\otimes S^{n-h-j,j}$ can appear in the decomposition of $\text{Res}^{S_n}_{S_{n-h}\times S_h}S^{n-k,k}$ into irreducible representations.
\end{proof}

\begin{lemma}\label{Rmk}
Suppose again that $0\leq i\leq h/2$, $0\leq j\leq (n-h)/2$ and that
$\phi\in S^{h-i,i}(A)$, $\psi\in S^{n-h-j,j}(B)$. For $0\leq k\leq m\leq n $ and $\max\{i-h+k,j\}\leq l\leq \min\{k-i,n-h-j\}$, we have

\[
R_{m-k}\left[(R_{k-l-i}\phi)\otimes (R_{l-j}\psi)\right]=\sum_{w=\max\{l,i-h+m\}}^{\min\{m-k+l,n-h-j\}}\binom{m-w-i}{k-l-i}\binom{w-j}{l-j}(R_{m-w-i}\phi)\otimes (R_{w-j}\psi)
\]

\end{lemma}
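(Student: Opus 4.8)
The engine of the proof is a coproduct rule for the raising operator $R_q$ relative to the tensor splitting $\{1,2,\dotsc,n\}=A\sqcup B$. The plan is first to establish, for arbitrary $\phi,\psi$, the identity
\[
R_q[\phi\otimes\psi]=\sum_{p=0}^{q}(R_p\phi)\otimes(R_{q-p}\psi),
\]
which I would read off directly from the combinatorial description of $R_q$ on Dirac masses. Since $\delta_X\otimes\delta_Y=\delta_{X\cup Y}$, one has $R_q\delta_{X\cup Y}=\sum_Z\delta_Z$, where $Z$ runs over the $(\lvert X\rvert+\lvert Y\rvert+q)$-sets containing $X\cup Y$; splitting the $q$ added points into $p$ points taken from $A\setminus X$ and $q-p$ points taken from $B\setminus Y$ factors the sum as $\sum_{p=0}^q(R_p\delta_X)\otimes(R_{q-p}\delta_Y)$. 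This is just the $d^*$-analogue of the Leibniz rule \eqref{dprod}, or equivalently the binomial theorem applied to the commuting operators $d^*\otimes 1$ and $1\otimes d^*$.

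Next I would specialize this rule to $q=m-k$, replacing $\phi$ with $R_{k-l-i}\phi$ and $\psi$ with $R_{l-j}\psi$, and then collapse the iterated raisings using $R_pR_s=\binom{p+s}{p}R_{p+s}$, which is immediate from $R_q=(d^*)^q/q!$. This produces
\[
R_{m-k}\big[(R_{k-l-i}\phi)\otimes(R_{l-j}\psi)\big]=\sum_{p=0}^{m-k}\binom{p+k-l-i}{p}\binom{m-k-p+l-j}{m-k-p}(R_{p+k-l-i}\phi)\otimes(R_{m-k-p+l-j}\psi).
\]
Putting $w=m-k+l-p$ turns the $\psi$-exponent into $w-j$ and the $\phi$-exponent into $m-w-i$, and the two binomial coefficients simplify, via $\binom{a}{b}=\binom{a}{a-b}$, to $\binom{m-w-i}{k-l-i}$ and $\binom{w-j}{l-j}$, exactly as in the statement. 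As $p$ ranges over $0,\dotsc,m-k$ the index $w$ ranges over the naive interval $l\leq w\leq m-k+l$.

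It then remains to trim this interval down to $\max\{l,i-h+m\}\leq w\leq\min\{m-k+l,n-h-j\}$, and this is the only point where the hypotheses $\phi\in S^{h-i,i}(A)$ and $\psi\in S^{n-h-j,j}(B)$ (that is, $\phi,\psi$ in the kernel of $d$) are used. Here I would invoke part \ref{corradtr1} of Theorem \ref{corradtr}: for $\phi\in S^{h-i,i}(A)$ one has $\langle R_s\phi,R_s\phi\rangle=\binom{h-2i}{s}\langle\phi,\phi\rangle$, whose binomial factor vanishes once $s>h-2i$, forcing $R_s\phi=0$ (equivalently, the relevant permutation module over $A$ then contains no copy of $S^{h-i,i}(A)$, so the equivariant map $R_s$ annihilates it). Applied with $s=m-w-i$ this kills every term with $w<i-h+m$, and the symmetric statement $R_s\psi=0$ for $s>(n-h)-2j$ removes every term with $w>n-h-j$. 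The leftover constraints $w\leq m-i$ and $w\geq j$ are automatic from $l\leq k-i$ and $l\geq j$ in the admissible range of $l$, so the surviving terms are precisely those in the stated window.

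The material of the first two paragraphs is routine bookkeeping once the coproduct rule and the semigroup relation $R_pR_s=\binom{p+s}{p}R_{p+s}$ are in hand. The genuinely delicate step, which I expect to be the main obstacle, is the truncation in the last paragraph: one must check that the terms deleted by the kernel hypotheses are exactly those falling outside $\max\{l,i-h+m\}\leq w\leq\min\{m-k+l,n-h-j\}$, and that no interior term is accidentally annihilated. Matching the vanishing forced by $\mathrm{Ker}\,d$ against the summation limits is where the argument has to be carried out with care.
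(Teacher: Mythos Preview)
Your argument is correct and follows essentially the same route as the paper: derive the Leibniz/coproduct rule for $d^*$ (equivalently for $R_q$), combine it with $R_pR_s=\binom{p+s}{p}R_{p+s}$, then substitute $t=m-k$, $q=k-l-i$, $p=l-j$, $s=w-l$ and trim the summation range. The only cosmetic difference is that the paper obtains the coproduct rule via the binomial expansion of $(d^*)^t$ rather than directly on Dirac masses, and for the truncation it cites part~\ref{corradtr2} of Theorem~\ref{corradtr} (the decomposition, which is exactly your parenthetical argument) rather than the norm identity~\ref{corradtr1}, whose stated hypotheses do not literally cover the case $s>h-2i$.
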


\begin{proof}
First of all, note that $d^*(\phi'\otimes\psi')=(d^*\phi')\otimes\psi'+\phi'\otimes(d^*\psi')$ (analogous to \eqref{dprod}).
Therefore, as in the Leibnitz rule of elementary calculus, we get

\[
(d^*)^t(\phi'\otimes\psi')=\sum_{s=0}^t\binom{t}{s}\left[(d^*)^{t-s}\phi'\right]\otimes[(d^*)^s\psi']. 
\]

Since $R_qR_p=\binom{p+q}{q}R_{p+q}$, it follows that

\begin{equation}\label{RtRqRp}
R_t\left[(R_q\phi')\otimes(R_p\psi')\right]=\sum_{s=0}^t\binom{t-s+q}{q}\binom{s+p}{p}\left[(R_{t-s+q}\phi')\otimes(R_{s+p}\psi')\right]. 
\end{equation}

 The formula in the statement is obtained by setting $t=m-k,q=k-l-i,p=l-j$ and $s=w-l$ (and taking into account \ref{corradtr2} in Theorem \ref{corradtr} for the limits in the sum).
 
\end{proof}

\begin{theorem}\label{LRexplicit} 
For $0\leq m,h\leq n$, $0\leq k\leq\min\{n-m,m\}$, $0\leq i\leq h/2$, $0\leq j\leq (n-h)/2$,
$k-h\leq j-i\leq n-h-k$ and $i+j\leq k$, the map 
$T_m:S^{h-i,i}(A)\otimes S^{n-h-j,j}(B)\mapsto M^{n-m,m}$ given by setting, for $\phi\otimes\psi\in S^{h-i,i}(A)\otimes S^{n-h-j,j}(B)$,

\begin{multline*}
T_m(\phi\otimes\psi)=\sum_{w=\max\{j,i-h+m\}}^{\min\{n-h-j,m-i\}}E_{k-i-j}(n-h-2j,h-2i,m-i-j,w-j)(R_{m-w-i}\phi)\otimes (R_{w-j}\psi),
\end{multline*}

is an explicit immersion of $S^{h-i,i}\otimes S^{n-h-j,j}$ into $\text{\rm Res}^{S_n}_{S_{n-h}\times S_h}\left[R_{m-k}S^{n-k,k}\right]$.
Moreover, 

\begin{equation}\label{norm}
\begin{split}
\lVert T_m(\phi\otimes\psi)\rVert^2_{M^{n-m,m}}&=\binom{n-2i-2j}{m-i-j}\binom{n-2i-2j}{k-i-j}^{-1}\cdot\frac{n-i-j-k+1}{n-2k+1}\times\\
&\times(n-h-j-k+i+1)_{k-i-j}(h-i-k+j+1)_{k-i-j}(m-k+1)_{k-i-j}\times\\ &\times(n-m-k+1)_{k-i-j}\lVert\phi\rVert^2_{M^{h-i,i}(A)}\lVert\psi\rVert^2_{M^{n-h-j,j}(B)}.
\end{split}
\end{equation}

Finally,

\begin{equation}\label{RqTm}
R_qT_m=\binom{m-k+q}{q}T_{m+q}.
\end{equation}

\end{theorem}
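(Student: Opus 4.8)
The plan is to realize $T_m$ as the composition $R_{m-k}\circ\iota$, where $\iota$ denotes the explicit immersion of $S^{h-i,i}\otimes S^{n-h-j,j}$ into $\text{Res}^{S_n}_{S_{n-h}\times S_h}S^{n-k,k}$ furnished by Corollary \ref{LR}, namely $\iota(\phi\otimes\psi)=\sum_{l=j}^{k-i}E_{k-i-j}(n-h-2j,h-2i,k-i-j,l-j)(R_{k-l-i}\phi)\otimes(R_{l-j}\psi)$. First I would apply $R_{m-k}$ to $\iota(\phi\otimes\psi)$, expand each summand by means of Lemma \ref{Rmk}, and then interchange the (finite) order of summation so that the outer index becomes $w$ and the inner index is $l$.

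The heart of the argument is to recognize the resulting inner sum over $l$ as an instance of the transformation formula \eqref{form7}. Matching parameters, one sets $a=n-h-2j$, $b=h-2i$, $c=k-i-j$, $d=m-i-j$, polynomial degree equal to $k-i-j$, summation variable $x=l-j$ and output variable $y=w-j$; the binomial weights $\binom{m-w-i}{k-l-i}\binom{w-j}{l-j}$ produced by Lemma \ref{Rmk} are exactly the factors $\binom{d-y}{c-x}\binom{y}{x}$. Since here the right-hand prefactor of \eqref{form7} is $\binom{d-(k-i-j)}{c-(k-i-j)}=\binom{m-k}{0}=1$, the inner sum collapses to $E_{k-i-j}(n-h-2j,h-2i,m-i-j,w-j)$, which is precisely the coefficient defining $T_m$. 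Once I check that the telescoping of the summation limits reproduces the stated range $\max\{j,i-h+m\}\le w\le\min\{n-h-j,m-i\}$ (terms outside the common support of the Hahn polynomial and the binomials vanish), I will have $T_m=R_{m-k}\circ\iota$. The immersion claim then follows at once: $\iota$ is an $S_{n-h}\times S_h$-equivariant injection into $S^{n-k,k}$, while $R_{m-k}$ is an $S_n$-equivariant, hence $S_{n-h}\times S_h$-equivariant, injection on $S^{n-k,k}$ by part \ref{corradtr1} of Theorem \ref{corradtr}; so the composite is an equivariant injection with image inside $R_{m-k}S^{n-k,k}$.

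For the norm \eqref{norm} I would again exploit $T_m=R_{m-k}\circ\iota$ together with part \ref{corradtr1} of Theorem \ref{corradtr} to write $\lVert T_m(\phi\otimes\psi)\rVert^2=\binom{n-2k}{m-k}\lVert\iota(\phi\otimes\psi)\rVert^2$. The summands of $\iota(\phi\otimes\psi)$ indexed by distinct $l$ are supported on distinct $S_{n-h}\times S_h$-orbits (the sets $Z$ with $\lvert Z\cap A\rvert=k-l$), hence are mutually orthogonal, and each tensor norm factorizes as $\lVert R_{k-l-i}\phi\rVert^2\lVert R_{l-j}\psi\rVert^2$; applying part \ref{corradtr1} of Theorem \ref{corradtr} separately on $A$ and on $B$ converts these into $\binom{h-2i}{k-l-i}\lVert\phi\rVert^2$ and $\binom{n-h-2j}{l-j}\lVert\psi\rVert^2$. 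The remaining weighted sum $\sum_{l}\binom{h-2i}{k-l-i}\binom{n-h-2j}{l-j}E_{k-i-j}(n-h-2j,h-2i,k-i-j,l-j)^2$ is exactly the diagonal case $n=m$ of the orthogonality relation \eqref{orthrel} with the same parameters $a,b,c$ (note $a+b=n-2i-2j$, and the summation limits match the support in \eqref{cab} precisely under the hypotheses $k-h\le j-i\le n-h-k$); evaluating it and simplifying the Pochhammer and binomial factors yields \eqref{norm}.

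Finally, \eqref{RqTm} is immediate from the composition structure: using the identity $R_qR_p=\binom{p+q}{q}R_{p+q}$ recorded in the proof of Lemma \ref{Rmk}, one gets $R_qT_m=R_qR_{m-k}\iota=\binom{m-k+q}{q}R_{(m+q)-k}\iota=\binom{m-k+q}{q}T_{m+q}$. I expect the main obstacle to be the bookkeeping of the summation limits in the first step: the substitution of parameters into \eqref{form7} is mechanical, but verifying that the interchange of sums followed by the discarding of vanishing terms reproduces exactly the asserted index range for $T_m$ requires combining the support condition of the Hahn polynomials with the range constraints coming from the operators $R_q$. By contrast, the norm computation, though lengthy, reduces entirely to \eqref{orthrel} and elementary factorial identities, and \eqref{RqTm} is a one-line consequence of the composition.
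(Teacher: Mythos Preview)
Your proposal is correct and follows essentially the same route as the paper: realize $T_m$ as $R_{m-k}$ composed with the immersion of Corollary~\ref{LR}, expand via Lemma~\ref{Rmk}, swap the sums and collapse the inner one with \eqref{form7}, then read off \eqref{RqTm} from $R_qR_{m-k}=\binom{m-k+q}{q}R_{m+q-k}$. The only (harmless) deviation is in the norm step: the paper computes $\lVert T_m(\phi\otimes\psi)\rVert^2$ directly in $M^{n-m,m}$ by applying \eqref{orthrel} with third parameter $m-i-j$, whereas you first use the isometry constant $\binom{n-2k}{m-k}$ from Theorem~\ref{corradtr}.\ref{corradtr1} to reduce to $\lVert\iota(\phi\otimes\psi)\rVert^2$ and then invoke \eqref{orthrel} with third parameter $k-i-j$; the two computations differ only by an elementary factorial identity.
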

\begin{proof}

First note that 

\begin{equation}\label{ljki}
\sum_{l=j}^{k-i}\;\sum_{w=\max\{l,i-h+m\}}^{\min\{m-k+l,n-h-j\}}=\sum_{w=\max\{j,i-h+m\}}^{\min\{n-h-j,m-i\}}\sum_{l=\max\{j,w-m+k\}}^{\min\{k-i,w\}}.
\end{equation}
 
Then by composing the map in Corollary \ref{LR} with $R_{m-k}$ (see \ref{corradtr2} in Theorem \ref{corradtr}), we get

\begin{multline*}
R_{m-k}\sum_{l=j}^{k-i}E_{k-i-j}(n-h-2j,h-2i,k-i-j,l-j)(R_{k-i-l}\phi)\otimes(R_{l-j}\psi)\\
=\sum_{w=\max\{j,i-h+m\}}^{\min\{n-h-j,m-i\}}\Biggl[\sum_{l=\max\{j,w-m+k\}}^{\min\{k-i,w\}}\binom{m-w-i}{k-l-i}\binom{w-j}{l-j}\times\\
\times E_{k-i-j}(n-h-2j,h-2i,k-i-j,l-j)\Biggr](R_{m-w-i}\phi)\otimes (R_{w-j}\psi)\\
=\sum_{w=\max\{j,i-h+m\}}^{\min\{n-h-j,m-i\}}E_{k-i-j}(n-h-2j,h-2i,m-i-j,w-j)(R_{m-w-i}\phi)\otimes (R_{w-j}\psi)
\end{multline*}

where the first equality follows from Lemma \ref{Rmk} and \eqref{ljki} and the second equality from \eqref{form7}. This proves that $T_m$ is an immersion of $S^{h-i,i}\otimes S^{n-h-j,j}$ into $\text{Res}^{S_n}_{S_{n-h}\times S_h}\left[R_{m-k}S^{n-k,k}\right]$.

From \ref{corradtr1} in Theorem \ref{corradtr}, we deduce that 

\[
\lVert(R_{m-w-i}\phi)\otimes(R_{w-j}\psi)\rVert^2_{M^{n-m,m}}=\binom{h-2i}{m-w-i}\binom{n-h-2j}{w-j}\lVert\phi\rVert^2_{S^{h-i,i}(A)}\lVert\psi\rVert^2_{S^{n-h-j,j}(B)}
\]

and then \eqref{norm} follows from the orthogonality relations \eqref{orthrel}.

Finally, \eqref{RqTm} is a simple consequence of the identities $T_m=R_{m-k}T_k$ and $R_qR_{m-k}=\binom{m-k+q}{q}R_{m-k+q}$.

\end{proof}

If $X\subseteq\{1,2,\dotsc,n\}$ and $0\leq t\leq\lvert X\rvert$, we denote by  $\sigma_t(X)$ the characteristic function of the set of all $Y\subseteq X,\lvert Y\rvert=t$. 
If $\max\{0,m-h\}\leq w\leq\min\{n-h,m\}$, we denote by $\sigma_{m-w}(A)\otimes\sigma_w(B)$ the characteristic function of the set of all $Y\in\Omega_m$ such that $\lvert Y\cap A\rvert=m-w$ and $\lvert Y\cap B\rvert=w$ (compare with \eqref{SnSnh}).

\begin{corollary}\label{spherical}\cite{Du2} For $0\leq m,h\leq n$ and $0\leq k\leq \min\{n-m,m,n-h,h\}$, the space of $S_{n-h}\times S_h$-invariant vectors in the representation  $R_{m-k}S^{n-k,k}$ is spanned by the function $\Phi(n,h,m,k)=\sum_{w=\max\{0,-h+m\}}^{\min\{n-h,m\}}E_k(n-h,h,m,w)\sigma_{m-w}(A)\otimes\sigma_w(B)$.
\end{corollary}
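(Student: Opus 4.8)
The plan is to specialize Theorem \ref{LRexplicit} to the case $i=j=0$, which corresponds to the trivial representation of $S_{n-h}\times S_h$. First I would note that, since $R_{m-k}$ is an injective $S_n$-intertwiner (part \ref{corradtr1} of Theorem \ref{corradtr}), we have $R_{m-k}S^{n-k,k}\cong S^{n-k,k}$ as $S_n$-modules, so their restrictions to $S_{n-h}\times S_h$ agree and Corollary \ref{LR} applies. The $S_{n-h}\times S_h$-invariant vectors in the completely reducible module $R_{m-k}S^{n-k,k}$ form exactly the isotypic component of the trivial representation, and the trivial representation of $S_{n-h}\times S_h$ is $S^{h,0}(A)\otimes S^{n-h,0}(B)$ (each factor $S^{t,0}$ being one-dimensional, by the dimension formula $\binom{t}{0}-\binom{t}{-1}=1$, and hence the trivial module). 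Taking $i=j=0$ in Corollary \ref{LR}, the conditions $k-h\leq j-i\leq n-h-k$ and $i+j\leq k$ reduce to $0\leq k\leq\min\{h,n-h\}$, which is guaranteed by the hypothesis; hence this trivial module occurs with multiplicity exactly $1$, and the invariant subspace is one-dimensional.

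It therefore suffices to exhibit one nonzero invariant vector, which I would obtain by applying the immersion $T_m$ of Theorem \ref{LRexplicit} to the generator of the trivial module. The module $M^{h,0}(A)$ consists of functions on $\Omega_0(A)=\{\emptyset\}$, so it is spanned by $\delta_\emptyset$; since $d\delta_\emptyset=0$ we get $S^{h,0}(A)=\mathbb{C}\delta_\emptyset$, and likewise $S^{n-h,0}(B)=\mathbb{C}\delta_\emptyset$. Substituting $i=j=0$ and $\phi=\psi=\delta_\emptyset$ into the formula for $T_m$, the summation limits collapse to $\max\{0,m-h\}\leq w\leq\min\{n-h,m\}$ and the coefficient becomes $E_k(n-h,h,m,w)$.

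The final step is the direct identification $R_q\delta_\emptyset=\sigma_q$: by the definition $R_q\delta_A=\sum_{B\supset A,\,|B|=|A|+q}\delta_B$, the vector $R_{m-w}\delta_\emptyset$ (formed inside the subsets of $A$) is the sum of all $\delta_C$ with $C\subseteq A$, $|C|=m-w$, i.e.\ the characteristic function $\sigma_{m-w}(A)$, and similarly $R_w\delta_\emptyset=\sigma_w(B)$. Using $\delta_C\otimes\delta_D=\delta_{C\cup D}$, the term $(R_{m-w}\delta_\emptyset)\otimes(R_w\delta_\emptyset)$ is the characteristic function $\sigma_{m-w}(A)\otimes\sigma_w(B)$, whence $T_m(\delta_\emptyset\otimes\delta_\emptyset)=\Phi(n,h,m,k)$. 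By Theorem \ref{LRexplicit} this vector lies in $R_{m-k}S^{n-k,k}$ and is $S_{n-h}\times S_h$-invariant (being the image of the trivial module), and it is nonzero since $T_m$ is injective; combined with the one-dimensionality above, it spans the invariant subspace. I expect no real obstacle here: the only points needing care are checking that the collapsed index ranges of $T_m$ coincide with those in the definition of $\Phi$, and the harmless interpretation of $S^{t,0}$ as the trivial representation.
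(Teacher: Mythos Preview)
Your proposal is correct and follows exactly the paper's approach: the paper's proof is the one-liner ``Take $\phi=\sigma_0(A)$ and $\psi=\sigma_0(B)$ (and therefore $i=j=0$) in Theorem~\ref{LRexplicit},'' and your $\delta_\emptyset$ is precisely $\sigma_0$. You have simply spelled out the details (multiplicity one via Corollary~\ref{LR}, the identification $R_q\delta_\emptyset=\sigma_q$) that the paper leaves implicit.
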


\begin{proof}
Take $\phi=\sigma_0(A)$ and $\psi=\sigma_0(B)$ (and therefore $i=j=0$) in Theorem \ref{LRexplicit}.
\end{proof}

In particular, the spherical functions of the Gelfand pair $(S_n,S_{n-m}\times S_m)$, normalized so that the coefficient of $\sigma_{m}(A)\sigma_0(B)$ is 1, are given by:
$\frac{1}{(-1)^k(n-m-k+1)_k(m-k+1)_k}\Phi(n,m,m,k)$, $0\leq k\leq\min\{n-m,m\}$.

\section{The tree method for multidimensional Hahn polynomials}

\subsection{Labeled trees and multidimensional Hahn polynomials}\label{Labeled}

We recall that a {\em tree} is a connected simple graph without loops or circuits.
A {\em rooted binary tree} is a tree with a distinguished vertex $\alpha$ ({\em the root}) of degree 2 and all the remaining vertices of degree 3 or 1. The vertices of degree 1 are called {\em the leaves}, all the other vertices are called {\em internal vertices}, or {\em branch points}. In what follows, $\mathcal{T}$ is always a finite binary tree (identified with the set of its vertices).

The $l$-th {\em level} of a tree $\mathcal{T}$, denoted by $\mathcal{T}_l$, is formed by the vertices at distance $l$ from the root. The {\em height} of $\mathcal{T}$ is the greatest $L$ such that there exists a vertex in $\mathcal{T}$ at distance $L$ from the root. If $\alpha\in\mathcal{T}_l$ is an internal vertex, then there exist exactly two vertices $\beta,\gamma\in\mathcal{T}_{l+1}$ connected with $\alpha$; they are called the {\em sons} of $\alpha$, while $\alpha$ is the father of $\beta$ and $\gamma$. We think of $\mathcal{T}$ as a {\em planar} tree, and therefore $\alpha$ has a {\em left} son and a {\em right} son.  In the figure below, $\beta$ is the left son and $\gamma$ is the right son.

\begin{picture}(400,80)
\put(200,50){\circle*{4}}
\put(170,20){\circle*{4}}
\put(230,20){\circle*{4}}
\put(200,55){$\alpha$}
\put(165,7){$\beta$}
\put(230,7){$\gamma$}

\thicklines
\put(200,50){\line(-1,-1){30}}
\put(200,50){\line(1,-1){30}}

\end{picture}

For a tree $\mathcal{T}$, we denote by $\mathcal{T}'$ and $\mathcal{T}''$ the subtrees formed respectively by the left descendants and the right descendants of the root. 
We denote by $\alpha$ the root of $\mathcal{T}$, and by $\beta$ and $\gamma$ respectively its left and right son. Then $\beta$ is the root of $\mathcal{T}'$ and $\gamma$ is the root of $\mathcal{T}''$. This decomposition will be the key for the following iterative procedure.\\ 

{\bf Basic iterative procedures.}
\begin{itemize}
\item
We define/prove something for the subtrees $\mathcal{T}'$ and $\mathcal{T}''$, examining the particular cases in which $\beta$ or/and $\gamma$ is a leaf, and then we show how to pass to the entire $\mathcal{T}$.\\
\item
We define/prove something for the root $\alpha$ and then we show how to pass to its sons $\beta$ and $\gamma$. 
\end{itemize}

Now we show how to use the second procedure to label the tree.
We will use four type of labelings. \\

The first is the {\em composition labeling}.
Suppose that ${\bf a}=(a_1,a_2,\dotsc,a_h)$ is a composition of $n$. This means that $a_1,a_2,\dotsc,a_h$ are positive integers and that $a_1+a_2+\dotsb +a_h=n$; $a_1,a_2,\dotsc,a_h$ are the {\em parts} of ${\bf a}$. We will write ${\bf a}\Vdash n$ to denote that ${\bf a}$ is a composition of $n$. Suppose also that $\mathcal{T}$ has $h$ leaves. We denote by $\mathcal{T}({\bf a})$ the labeled tree obtained in the following recursive way. The label of $\alpha$ is $(a_1,a_2,\dotsc,a_h)$. If $\mathcal{T}'$ has $t$ leaves, then  the label of its root $\beta$ is ${\bf a}'=(a_1,a_2,\dotsc,a_t)$, while the label of $\gamma$ (the root of $\mathcal{T}''$) is ${\bf a}''=(a_{t+1},a_{t+2},\dotsc,a_h)$. 
Note that given ${\bf a}$ and $\mathcal{T}$, the labeling is unique.
We give two examples with $h=4$.

\begin{picture}(400,150)
\put(100,120){\circle*{4}}
\put(105,120){$(a_1,a_2,a_3,a_4)$}

\put(300,120){\circle*{4}}
\put(305,120){$(a_1,a_2,a_3,a_4)$}


\put(270,90){\circle*{4}}
\put(330,90){\circle*{4}}

\thicklines
\put(300,120){\line(-1,-1){30}}
\put(300,120){\line(1,-1){90}}

\put(255,90){$a_1$}
\put(335,90){$(a_2,a_3,a_4)$}


\put(160,60){\circle*{4}}
\put(40,60){\circle*{4}}

\put(100,120){\line(-1,-1){90}}
\put(100,120){\line(1,-1){90}}

\put(165,60){$(a_3,a_4)$}
\put(0,60){$(a_1,a_2)$}

\put(300,60){\circle*{4}}
\put(360,60){\circle*{4}}

\put(330,90){\line(-1,-1){30}}

\put(365,60){$(a_3,a_4)$}
\put(285,60){$a_2$}

\put(130,30){\circle*{4}}
\put(190,30){\circle*{4}}
\put(70,30){\circle*{4}}
\put(10,30){\circle*{4}}

\put(40,60){\line(1,-1){30}}
\put(160,60){\line(-1,-1){30}}

\put(130,20){$a_3$}
\put(190,20){$a_4$}
\put(65,20){$a_2$}
\put(5,20){$a_1$}


\put(330,30){\circle*{4}}
\put(390,30){\circle*{4}}

\put(360,60){\line(-1,-1){30}}
\put(325,20){$a_3$}
\put(390,20){$a_4$}

\end{picture}

In what follows, to simplify notation, we set $\underline{a}_t=a_1+a_2+\dotsb+a_t$ (and therefore ${\bf a}'\Vdash \underline{a}_t$ and ${\bf a}''\Vdash n-\underline{a}_t$).

The second is the {\em variables labeling}. It is defined simply by taking a set of variables indicized by the internal vertices of $\mathcal{T}$. The resulting labeled tree is denote by $\mathcal{T}_{\text{v}}({\bf w})$, where ${\bf w}=(w_\tau)_{\tau\in\mathcal{T}}$ is a vector of variables (and $w_\tau$ is the label of $\tau\in\mathcal{T}$; often, the index $\tau$ will be omitted). The leaves do not have labels.\\

The third is the {\em spaces labeling}. It is defined by choosing a label $0\leq m\leq n$ for the root $\alpha$; if $w$ is the variable associated to the root, then $m-w$ is the label associated to $\beta$ and $w$ is the label associated to $\gamma$; the leaves are not labeled. The resulting labeled tree is denoted by $\mathcal{T}_{\text{s}}(m)$.\\

The fourth is the {\em representations labeling}. It is based on Corollary \ref{LR}. The label of a leaf is zero.   Suppose that 
$k$ is the label of $\alpha$, $i$ is the label of $\beta$ and $j$ is the label of $\gamma$. 

\begin{itemize}

\item If $\beta$ and $\gamma$ are both internal we must have

\begin{equation}\label{knm}
0\leq k\leq\min\{n-m,m\}
\end{equation} 

and

\begin{equation}\label{ijk}
\begin{split}
&\quad\qquad 0\leq i\leq\underline{a}_t/2,\quad\qquad 0\leq j\leq(n-\underline{a}_t)/2,\\
&k-\underline{a}_t\leq j-i\leq n-\underline{a}_t-k,\qquad i+j\leq k.
\end{split}
\end{equation}

\item Suppose $\beta$ is a leaf (resp. $\gamma$ is a leaf) and $\gamma$ is internal (resp. $\beta$ is internal). Now $t=1$, $i= 0$ and $k,j$ must satisfy the conditions in \eqref{knm} and \eqref{ijk}, with $i=0$ (resp. $t=h-1$, $j=0$ and $k,i$  must satisfy those conditions with $j=0$).\\

\item Suppose that $\beta$ and $\gamma$ are both leaves: now $h=2$ and \eqref{knm},\eqref{ijk} reduce to $0\leq k\leq\min\{a_1,a_2\}$.

We may also say that \eqref{knm},\eqref{ijk} must be satisfied and that we must have $i=0$ (resp. $j=0$) if $\beta$ (resp. $\gamma$) is a leaf. The resulting labeled tree will be denoted by $\mathcal{T}_{\text{r}}({\bf k})$. 

\end{itemize}

Clearly, the spaces labeling depends on $n$ and the variable labeling, while the representation labeling depends on both the composition and the spaces labeling.

Once we have labeled $\mathcal{T}$ as above, then $\mathcal{T}'({\bf a}),\mathcal{T}''({\bf a}),\dotsc$ will denote the subtrees $\mathcal{T}'$ and $\mathcal{T}''$ with the labeling inherited by $\mathcal{T}$.\\

Now we define a set of multidimensional Hahn polynomials associated to the tree $\mathcal{T}$ with the labelings defined above. We keep all the preceding notation; the definition is recursive.

\begin{itemize}

\item For 

\begin{equation}\label{jiat}
\max\{j,i-\underline{a}_t+m\}\leq w\leq\min\{n-\underline{a}_t-j,m-i\},
\end{equation}

we set

\begin{multline*}
E_{\mathcal{T}_{\text{\rm r}}({\bf k})}(\mathcal{T}({\bf a}),m,\mathcal{T}_{\text{\rm v}}({\bf w}))=
E_{k-i-j}(n-\underline{a}_t-2j,\underline{a}_t-2i,m-i-j,w-j)\times\\
\times E_{\mathcal{T}'_{\text{\rm r}}({\bf k})}(\mathcal{T}'({\bf a}),m-w,\mathcal{T}'_{\text{\rm v}}({\bf w}))
E_{\mathcal{T}''_{\text{\rm r}}({\bf k})}(\mathcal{T}''({\bf a}),w,\mathcal{T}''_{\text{\rm v}}({\bf w}))
\end{multline*}

\item If $\beta$ is a leaf, then $w$ must satisfy \eqref{jiat} with $i=0$, and we set $E_{\mathcal{T}'_{\text{\rm r}}({\bf k})}(\mathcal{T}'({\bf a}),m-w,\mathcal{T}'_{\text{\rm v}}({\bf w}))=1$.

\item If $\gamma$ is a leaf, then  
$w$ must satisfy \eqref{jiat} with $j=0$,
and we set $E_{\mathcal{T}''_{\text{\rm r}}({\bf k})}(\mathcal{T}''({\bf a}),w,\mathcal{T}''_{\text{\rm v}}({\bf w}))=1$.

\item If $\beta$ and $\gamma$ are both leaves,  
$w$ must satisfy \eqref{jiat} with $i,j=0$, 
and

\[
E_{\mathcal{T}_{\text{\rm r}}({\bf k})}(\mathcal{T}({\bf a}),m,\mathcal{T}_{\text{\rm v}}({\bf w}))=
E_k(a_2,a_1,m,w).
\]

\end{itemize}

\begin{remark}{\rm
The conditions \eqref{knm},\eqref{ijk} and \eqref{jiat} are imposed by Corollary \ref{LR} and Theorem \ref{LRexplicit} and agree with \eqref{cab}. Moreover, when \eqref{jiat} is satisfied, we have automatically $0\leq i\leq\min\{\underline{a}_t-m+w,m-w\}$ and $0\leq j\leq\min\{n-\underline{a}_t-w,w\}$, that is we do not need to 
impose the analog of \eqref{knm} to $i,j$. In other words, once we have imposed \eqref{jiat}, the labels in $\mathcal{T}_\text{r}({\bf k})$ must satisfy only the conditions obtained by iterating \eqref{ijk}, while the variables in $\mathcal{T}_\text{v}({\bf w})$ must satisfy only the conditions obtained by iterating \eqref{jiat}. 
}
\end{remark}

\begin{example}\label{h4}{\rm

Suppose that $h=4$ and take the tree and the labelings below. 

\begin{picture}(450,150)
\put(20,110){$\mathcal{T}({\bf a})$}
\put(270,110){$\mathcal{T}_{\text{v}}({\bf w})$}
\thicklines
\put(100,120){\circle*{4}}
\put(105,120){$(a_1,a_2,a_3,a_4)$}

\put(350,120){\circle*{4}}
\put(355,120){$w$}


\put(160,60){\circle*{4}}
\put(40,60){\circle*{4}}

\put(100,120){\line(-1,-1){90}}
\put(100,120){\line(1,-1){90}}

\put(165,60){$(a_3,a_4)$}
\put(0,60){$(a_1,a_2)$}

\put(410,60){\circle*{4}}
\put(290,60){\circle*{4}}

\put(350,120){\line(-1,-1){90}}
\put(350,120){\line(1,-1){90}}

\put(415,60){$u$}
\put(280,60){$v$}


\put(130,30){\circle*{4}}
\put(190,30){\circle*{4}}
\put(70,30){\circle*{4}}
\put(10,30){\circle*{4}}

\put(40,60){\line(1,-1){30}}
\put(160,60){\line(-1,-1){30}}

\put(130,20){$a_3$}
\put(190,20){$a_4$}
\put(65,20){$a_2$}
\put(5,20){$a_1$}


\put(380,30){\circle*{4}}
\put(440,30){\circle*{4}}
\put(320,30){\circle*{4}}
\put(260,30){\circle*{4}}

\put(290,60){\line(1,-1){30}}
\put(410,60){\line(-1,-1){30}}

\end{picture}

\begin{picture}(450,150)
\put(20,110){$\mathcal{T}_{\text{s}}(m)$}
\put(270,110){$\mathcal{T}_{\text{r}}({\bf k})$}
\thicklines
\put(100,120){\circle*{4}}
\put(105,120){$m$}

\put(350,120){\circle*{4}}
\put(355,120){$k$}


\put(160,60){\circle*{4}}
\put(40,60){\circle*{4}}

\put(100,120){\line(-1,-1){90}}
\put(100,120){\line(1,-1){90}}

\put(165,60){$w$}
\put(0,60){$m-w$}

\put(410,60){\circle*{4}}
\put(290,60){\circle*{4}}

\put(350,120){\line(-1,-1){90}}
\put(350,120){\line(1,-1){90}}

\put(415,60){$j$}
\put(280,60){$i$}


\put(130,30){\circle*{4}}
\put(190,30){\circle*{4}}
\put(70,30){\circle*{4}}
\put(10,30){\circle*{4}}

\put(40,60){\line(1,-1){30}}
\put(160,60){\line(-1,-1){30}}


\put(380,30){\circle*{4}}
\put(440,30){\circle*{4}}
\put(320,30){\circle*{4}}
\put(260,30){\circle*{4}}

\put(290,60){\line(1,-1){30}}
\put(410,60){\line(-1,-1){30}}

\put(380,20){0}
\put(440,20){0}
\put(315,20){0}
\put(255,20){0}

\end{picture}

Then, for $0\leq m\leq n\equiv a_1+a_2+a_3+a_4$, $0\leq k\leq \min\{n-a_1,n-a_2,n-a_3,n-a_4,m,n-m\}$, 
$0\leq i\leq\min\{a_1,a_2\}$, $0\leq j\leq\min\{a_3,a_4\}$,
$k-a_1-a_2\leq j-i\leq a_3+a_4-k$ and $i+j\leq k$, the associated Hahn polynomials are

\begin{multline}
E_{\mathcal{T}_{\text{\rm r}}({\bf k})}(\mathcal{T}({\bf a}),m,\mathcal{T}_{\text{\rm v}}({\bf w}))=
E_{k-i-j}(a_3+a_4-2j,a_1+a_2-2i,m-i-j,w-j)\times\\
\times E_i(a_2,a_1,m-w,v)
E_j(a_4,a_3,w,u)
\end{multline}

defined for
$\max\{j,i-a_1-a_2+m\}\leq w\leq\min\{a_3+a_4-j,m-i\}$, $\max\{0,m-a_1-w\}\leq v\leq\min\{a_2,m-w\}$ and $\max\{0,w-a_3\}\leq u\leq\min\{a_4,w\}$. The conditions $k\leq n-a_r$, $r=1,2,3,4$, come from the conditions on $i,j$ and agree with the Young rule for the symmetric group.
}
\end{example}

\begin{example}\label{ah}{\rm
Now we take a general composition ${\bf a}=(a_1,a_2,\dotsc,a_h)$ but a special kind of tree. To simplify notation, we set $w_0=m$.

\begin{picture}(400,150)
\put(-5,120){$\mathcal{T}({\bf a})$}
\put(195,120){$\mathcal{T}_{\text{v}}({\bf w})$}
\put(50,120){\circle*{4}}
\put(55,120){$(a_1,a_2,\dotsc,a_h)$}

\put(250,120){\circle*{4}}
\put(255,120){$w_1$}


\put(20,90){\circle*{4}}
\put(80,90){\circle*{4}}

\thicklines
\put(50,120){\line(-1,-1){30}}
\put(50,120){\line(1,-1){30}}
\put(110,60){\line(1,-1){30}}
\dottedline{4}(80,90)(110,60)

\put(5,90){$a_1$}
\put(85,90){$(a_2,\dotsc,a_h)$}

\put(220,90){\circle*{4}}
\put(280,90){\circle*{4}}

\thicklines
\put(250,120){\line(-1,-1){30}}
\put(250,120){\line(1,-1){30}}
\put(310,60){\line(1,-1){30}}
\dottedline{4}(280,90)(310,60)


\put(285,90){$w_2$}


\put(50,60){\circle*{4}}
\put(110,60){\circle*{4}}

\put(80,90){\line(-1,-1){30}}

\put(115,60){$(a_{h-1},a_h)$}
\put(35,60){$a_2$}

\put(250,60){\circle*{4}}
\put(310,60){\circle*{4}}

\put(280,90){\line(-1,-1){30}}

\put(315,60){$w_{h-1}$}



\put(80,30){\circle*{4}}
\put(140,30){\circle*{4}}

\put(110,60){\line(-1,-1){30}}
\put(75,20){$a_{h-1}$}
\put(140,20){$a_h$}


\put(280,30){\circle*{4}}
\put(340,30){\circle*{4}}

\put(310,60){\line(-1,-1){30}}

\end{picture}

\begin{picture}(400,150)
\put(-5,120){$\mathcal{T}_{\text{s}}(m)$}
\put(195,120){$\mathcal{T}_{\text{r}}({\bf k})$}
\put(50,120){\circle*{4}}
\put(55,120){$w_0$}

\put(250,120){\circle*{4}}
\put(255,120){$k_1$}


\put(20,90){\circle*{4}}
\put(80,90){\circle*{4}}

\thicklines
\put(50,120){\line(-1,-1){30}}
\put(50,120){\line(1,-1){30}}
\put(110,60){\line(1,-1){30}}
\dottedline{4}(80,90)(110,60)

\put(85,90){$w_1$}

\put(220,90){\circle*{4}}
\put(280,90){\circle*{4}}

\thicklines
\put(250,120){\line(-1,-1){30}}
\put(250,120){\line(1,-1){30}}
\put(310,60){\line(1,-1){30}}
\dottedline{4}(280,90)(310,60)

\put(285,90){$k_2$}
\put(215,80){0}

\put(50,60){\circle*{4}}
\put(110,60){\circle*{4}}

\put(80,90){\line(-1,-1){30}}

\put(115,60){$w_{h-2}$}

\put(250,60){\circle*{4}}
\put(310,60){\circle*{4}}

\put(280,90){\line(-1,-1){30}}

\put(315,60){$k_{h-1}$}
\put(245,50){0}



\put(80,30){\circle*{4}}
\put(140,30){\circle*{4}}

\put(110,60){\line(-1,-1){30}}
\put(275,20){0}
\put(340,20){0}


\put(280,30){\circle*{4}}
\put(340,30){\circle*{4}}

\put(310,60){\line(-1,-1){30}}

\end{picture}

We introduce a specific notation in this example: for $r=1,2,\dotsc,h-1$ we set

\[
\overline{a}_r=\min\{a_{r+1}+a_{r+2}+\dotsb+a_h,a_r+a_{r+2}+a_{r+3}+\dotsb+a_h,\dotsc,a_{r+1}+a_{r+2}+\dotsb+a_{h-1}\}.
\]

For $0\leq w_0\leq n$, $0\leq k_1\leq \min\{n-w_0,w_0,\overline{a}_1\}$, 

\[
\max\{0,k_{r-1}-a_{r-1}\}\leq k_r\leq\min\{a_r+\dotsb+a_h-k_{r-1},k_{r-1},\overline{a}_r\},\qquad r=2,3,\dotsc,h-2,
\]

and $\max\{0,k_{h-2}-a_{h-2}\}\leq k_{h-1}\leq \min\{a_{h-1}+a_h-k_{h-2},k_{h-2},\overline{a}_h\}$ (we also set $k_h=0$) the associated Hahn polynomials are 

\[
E_{\mathcal{T}_\text{\rm r}({\bf k})}(\mathcal{T}({\bf a}),w_0,\mathcal{T}_\text{\rm v}({\bf w}))=\prod_{r=1}^{h-1}
E_{k_r-k_{r+1}}(a_{r+1}+\dotsb+a_h-2k_{r+1},a_r,w_{r-1}-k_{r+1},w_r-k_{r+1})
\]

defined for 

\[
\max\{k_{r+1},w_{r-1}-a_r\}\leq w_r\leq\min\{a_{r+1}+\dotsb+a_h-k_{r+1},w_{r-1}\},\qquad r=1,2,\dotsc,h-1.
\]

The condition $k_{h-1}\leq \overline{a}_{h-1}$ comes form Corollary \ref{spherical} and forces $k_r\leq \overline{a}_r$, $r=h-2,h-3,\dotsc, 1$.

}
\end{example}

\begin{remark}
{\rm
The preceding example gives the multidimensional Hahn polynomials in \cite{K-M} and, as a special case, those in \cite{Du5}.

Indeed,
Dunkl's function  $(-1)^{r-m}\tilde{\theta}(u_1,u_2,u_3;a,b,c)$ is obtained by setting $h=3, (a_1,a_2,a_3)=(c,a,b),k_1=r,k_2=m$ and $m=M$, with the variables $w_1=u_1+u_2$ and $w_2=u_3$ (one has just to apply the symmetry relation (3.2) in \cite{Du2} (see \eqref{form5} in section \ref{secRegge} of the present paper) to $E_{r-m}(\dotsb)$, and this gives the factor $(-1)^{r-m}$). 

More generally,
in the Karlin and McGregor notation (\cite{K-M}, p.277) the function

\[
\phi\left(
\begin{array}{llllllll} w_{h-1}-w_h, &w_{h-2}-w_{h-1},&\dotsc,&w_0-w_1&&&&\\
&&&&\lvert k_{h-1},&k_{h-2}-k_{h-1},&\dotsc,& k_1-k_2\\
-a_{h-1}-1,&-a_{h-2}-1,&\dotsc,&-a_1-1&&&&
\end{array}	\right)
\]

is a multiple of our $E_{\mathcal{T}_\text{\rm r}({\bf k})}(\mathcal{T}({\bf a}),w_0,\mathcal{T}_\text{\rm v}({\bf w}))$ in the preceding example (one has to use the formulas in \cite{Du2}, p.631, keeping into account that $Q_m(x;\alpha,\beta,N)$ in \cite{Du2} is equal to $Q_m(x;\alpha,\beta,N+1)$ in \cite{K-M}). Tratnik \cite{Tratnik} gave a multi variable version of the Askey-Wilson polynomials that included the Karlin-McGregor multi variable Hahn polynomials (and the $q$-analog of Tratnik construction is given by Gaper and Rahman in \cite{GaRa}); it is natural to ask if the construction in the present paper may be generalized to the Tratnik (or Gasper-Rahman) setting (the $q$-analog of \cite{Du5} is in \cite{Du6}). Other multidimensional $q$-Hahn polynomials are in \cite{Rosengren}.
}
\end{remark}

\subsection{$S_{\bf a}- S_m\times S_{n-m}$ intertwining functions}

Now we define a set of $S_{\bf a}$-invariant functions in $M^{n-m,m}$. 

\begin{itemize}
\item

The function $\Phi(\mathcal{T}({\bf a}),m,\mathcal{T}_\text{\rm r}({\bf k}))\in M^{n-m,m}$ is defined by setting, iteratively,

\begin{multline}\label{Phifunctions}
\Phi(\mathcal{T}({\bf a}),m,\mathcal{T}_\text{\rm r}({\bf k}))=\sum_{w=\max\{j,i-\underline{a}_t+m\}}^{\min\{n-\underline{a}_t-j,m-i\}}E_{k-i-j}(n-\underline{a}_t-2j,\underline{a}_t-2i,m-i-j,w-j)
\times\\
\times\Phi(\mathcal{T}'({\bf a}),m-w,\mathcal{T}'_\text{\rm r}({\bf k}))\otimes\Phi(\mathcal{T}''({\bf a}),w,\mathcal{T}''_\text{\rm r}({\bf k})).
\end{multline}

\item If $\beta$ is a leaf (resp. $\gamma$ is a leaf) then $i=0$ and $t=1$ (resp. $j=0$ and $t=h-1$) and we set 
$\Phi(\mathcal{T}'({\bf a}),m-w,\mathcal{T}'_\text{\rm r}({\bf k}))=\sigma_{m-w}(A_1)$
(resp. $\Phi(\mathcal{T}''({\bf a}),w,\mathcal{T}''_\text{\rm r}({\bf k}))=\sigma_w(A_h)$).

\end{itemize}

In particular, if $\beta$ and $\gamma$ are both leaves we have
\begin{equation*}
\Phi(\mathcal{T}({\bf a}),m,\mathcal{T}_\text{\rm r}({\bf k}))=\sum_{w=\max\{0,m-a_1\}}^{\min\{n-a_1,m\}}E_k(a_2,a_1,m,w)
\sigma_{m-w}(A_1)\otimes\sigma_w(A_2).
\end{equation*}

\begin{lemma}\label{Fork}

For $k\leq r\leq n-k$, we have

\[
R_{r-m}\Phi(\mathcal{T}({\bf a}),m,\mathcal{T}_\text{\rm r}({\bf k}))=\binom{r-k}{m-k}\Phi(\mathcal{T}({\bf a}),r,\mathcal{T}_\text{\rm r}({\bf k}))
\]
\end{lemma}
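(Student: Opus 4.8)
The plan is to argue by induction on the number of leaves of $\mathcal{T}$ (equivalently, on its height), exploiting the recursive definition \eqref{Phifunctions}. The base fact is the \emph{leaf relation}: for a single part $a_r$ the associated function is $\sigma_{m}(A_r)=R_m\delta_\emptyset$ (the operator being taken inside $M^{\bullet}(A_r)$), so the identity $R_qR_p=\binom{p+q}{q}R_{p+q}$ gives at once
\[
R_{r-m}\sigma_m(A_r)=\binom{r}{m}\sigma_{r}(A_r),
\]
which is the asserted formula with $k=0$. This relation serves both as the genuine base of the induction and as the ``inductive hypothesis'' whenever a son of the root is a leaf. (As a consistency check, the two-leaf case coincides with Corollary \ref{spherical}, where $\Phi=T_m(\sigma_0(A)\otimes\sigma_0(B))$, and the formula then drops out of \eqref{RqTm} with $q=r-m$, since $\binom{r-k}{r-m}=\binom{r-k}{m-k}$.)

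For the inductive step I would apply $R_{r-m}$ to the defining sum \eqref{Phifunctions}, abbreviating the two subtree factors at levels $m-w$ and $w$ by $\Phi'_{m-w}$ and $\Phi''_{w}$. Since $\Phi'_{m-w}\in M^{\bullet}(A)$ and $\Phi''_{w}\in M^{\bullet}(B)$ with $A,B$ disjoint, the Leibniz-type rule for $d^{*}$ recorded in the proof of Lemma \ref{Rmk} (the source of \eqref{RtRqRp}) yields
\[
R_{r-m}\bigl[\Phi'_{m-w}\otimes\Phi''_{w}\bigr]=\sum_{s\geq0}\bigl(R_{r-m-s}\Phi'_{m-w}\bigr)\otimes\bigl(R_{s}\Phi''_{w}\bigr).
\]
Applying the inductive hypothesis to each factor replaces $R_{r-m-s}\Phi'_{m-w}$ by $\binom{r-w-s-i}{m-w-i}\Phi'_{r-w-s}$ and $R_{s}\Phi''_{w}$ by $\binom{w+s-j}{w-j}\Phi''_{w+s}$, so substituting back into \eqref{Phifunctions} produces a double sum over $w$ and $s$.

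The decisive step is then to re-index by $w'=w+s$ and collect the coefficient of $\Phi'_{r-w'}\otimes\Phi''_{w'}$, which equals
\[
\sum_{w}\binom{w'-j}{w-j}\binom{r-w'-i}{m-w-i}\,E_{k-i-j}(n-\underline{a}_t-2j,\underline{a}_t-2i,m-i-j,w-j).
\]
Setting $x=w-j$, $y=w'-j$, $c=m-i-j$, $d=r-i-j$ this is exactly the left-hand side of the transformation formula \eqref{form7} for the degree-$(k-i-j)$ Hahn polynomial, so \eqref{form7} collapses it to $\binom{r-k}{m-k}\,E_{k-i-j}(n-\underline{a}_t-2j,\underline{a}_t-2i,r-i-j,w'-j)$ (the binomial $\binom{d-(k-i-j)}{c-(k-i-j)}$ simplifying precisely to $\binom{r-k}{m-k}$). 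Reassembling via \eqref{Phifunctions} now at level $r$ gives $\binom{r-k}{m-k}\Phi(\mathcal{T}({\bf a}),r,\mathcal{T}_{\text{\rm r}}({\bf k}))$, as required.

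The main obstacle I anticipate is the bookkeeping of summation ranges: one must verify that the Leibniz sum over $s$, the inductive-hypothesis ranges $i\leq r-w-s\leq\underline{a}_t-i$ and $j\leq w+s\leq n-\underline{a}_t-j$, and the limits $\max\{0,c-d+y\}\leq x\leq\min\{c,y\}$ of \eqref{form7} are mutually consistent after the change of variable. This should be routine, since the binomial coefficients and the Hahn polynomials vanish outside their natural ranges, so the re-indexing introduces no spurious terms; the reshuffling identity \eqref{ljki} used in the proof of Theorem \ref{LRexplicit} is the analogous manipulation and can be imitated here. Finally, the leaf sub-cases (where $\beta$ and/or $\gamma$ is a leaf, forcing $i=0$ or $j=0$) are subsumed in the same computation by reading the leaf relation as the relevant hypothesis for that factor.
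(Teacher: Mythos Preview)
Your proposal is correct and follows essentially the same route as the paper: apply the Leibniz rule \eqref{RtRqRp} (with $p=q=0$) to each summand of \eqref{Phifunctions}, invoke the inductive hypothesis on each tensor factor, swap the order of summation via the substitution $v=w+s$, and collapse the inner sum with the transformation formula \eqref{form7}. Your treatment of the base case and the leaf sub-cases is more explicit than the paper's, but the core computation is identical.
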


\begin{proof}

The proof is by iteration. We limit ourselves to examine the case in which both $\beta$ and $\gamma$ are branch points.
Applying \eqref{RtRqRp}, with $t=r-m$, $p=q=0$ and $s=v-w$, we get

\begin{multline*}
R_{r-m}\Phi(\mathcal{T}({\bf a}),m,\mathcal{T}_\text{\rm r}({\bf k}))=
\sum_{w=\max\{j,i-\underline{a}_t+m\}}^{\min\{n-\underline{a}_t-j,m-i\}}\sum_{v=\max\{w,r-\underline{a}_t+i\}}^{\min\{n-\underline{a}_t-j,r-m+w\}}
E_{k-i-j}(n-\underline{a}_t-2j,\underline{a}_t-2i,m-i-j,w-j)
\times\\
\times\left[R_{r-m-v+w}\Phi(\mathcal{T}'({\bf a}),m-w,\mathcal{T}'_\text{\rm r}({\bf k}))\right]\otimes\left[R_{v-w}\Phi(\mathcal{T}''({\bf a}),w,\mathcal{T}''_\text{\rm r}({\bf k}))\right]\\
=\sum_{v=\max\{j,r-\underline{a}_t+i\}}^{\min\{n-\underline{a}_t-j,r-i\}}\Biggl[\sum_{w=\max\{j,v-r+m\}}^{\min\{v,m-i\}} E_{k-i-j}(n-\underline{a}_t,\underline{a}_t-2i,m-i-j,w-j)\times\\
\times\binom{r-v-i}{m-w-i}\binom{v-j}{w-j}\Biggr]
\Phi(\mathcal{T}'({\bf a}),r-w-s,\mathcal{T}'_\text{\rm r}({\bf k}))\otimes \Phi(\mathcal{T}''({\bf a}),w+s,\mathcal{T}''_\text{\rm r}({\bf k}))\\
=\binom{r-k}{m-k}\Phi(\mathcal{T}({\bf a}),r,\mathcal{T}_\text{\rm r}({\bf k})),
\end{multline*}

where the second equality follows from the induction hypothesis and last from \eqref{form7}.

\end{proof}

\begin{corollary}\label{Inthenot}
In the notation of Theorem \ref{LRexplicit}, we have:

\[
\Phi(\mathcal{T}({\bf a})),m,\mathcal{T}_\text{\rm r}({\bf k}))=T_m[\Phi(\mathcal{T}'({\bf a}),i,\mathcal{T}'_\text{\rm r}({\bf k}))\otimes \Phi(\mathcal{T}''({\bf a}),j,\mathcal{T}''_\text{\rm r}({\bf k}))]
\]

\end{corollary}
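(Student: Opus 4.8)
The plan is to obtain the identity by unwinding the recursive definition \eqref{Phifunctions} and matching it term-by-term against the formula for $T_m$ in Theorem \ref{LRexplicit}. The first thing to fix is the dictionary between the two statements: in Theorem \ref{LRexplicit} the distinguished set $A$ has cardinality $h$, whereas here $A$ is the ground set of the left subtree $\mathcal{T}'$ and has cardinality $\underline{a}_t$. With the substitution $h\mapsto\underline{a}_t$ the summation range $\max\{j,i-\underline{a}_t+m\}\le w\le\min\{n-\underline{a}_t-j,m-i\}$ and the Hahn coefficient $E_{k-i-j}(n-\underline{a}_t-2j,\underline{a}_t-2i,m-i-j,w-j)$ appearing in \eqref{Phifunctions} are literally those defining $T_m$. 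Moreover the indices $i,j,k$ satisfy the hypotheses of Theorem \ref{LRexplicit} by construction, since these are exactly the conditions \eqref{ijk} built into the representation labeling $\mathcal{T}_{\text r}(\mathbf k)$.

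The key step is to rewrite the two tensor factors in \eqref{Phifunctions} as single Radon liftings of the base functions. Write $\phi=\Phi(\mathcal{T}'(\mathbf a),i,\mathcal{T}'_{\text r}(\mathbf k))$ and $\psi=\Phi(\mathcal{T}''(\mathbf a),j,\mathcal{T}''_{\text r}(\mathbf k))$, where the second argument equals the representation label of the corresponding root. Applying Lemma \ref{Fork} to the subtree $\mathcal{T}'$ (whose root $\beta$ carries representation label $i$ and base space label $i$) with target space label $m-w$, and to $\mathcal{T}''$ with target $w$, gives
\[
R_{m-w-i}\,\phi=\binom{m-w-i}{0}\Phi(\mathcal{T}'(\mathbf a),m-w,\mathcal{T}'_{\text r}(\mathbf k))=\Phi(\mathcal{T}'(\mathbf a),m-w,\mathcal{T}'_{\text r}(\mathbf k))
\]
and
\[
R_{w-j}\,\psi=\binom{w-j}{0}\Phi(\mathcal{T}''(\mathbf a),w,\mathcal{T}''_{\text r}(\mathbf k))=\Phi(\mathcal{T}''(\mathbf a),w,\mathcal{T}''_{\text r}(\mathbf k)),
\]
the binomial exponents collapsing precisely because the base space label coincides with the representation label at each root. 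Substituting these two identities into \eqref{Phifunctions} turns its general summand into $E_{k-i-j}(\cdots)\,(R_{m-w-i}\phi)\otimes(R_{w-j}\psi)$, and summing over $w$ reproduces exactly the expression $T_m(\phi\otimes\psi)$ of Theorem \ref{LRexplicit}. This is the asserted equality.

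To apply $T_m$ legitimately one must know that $\phi\in S^{\underline{a}_t-i,i}(A)$ and $\psi\in S^{n-\underline{a}_t-j,j}(B)$, i.e. that both lie in $\ker d$; I would carry this as part of the induction underlying the whole recursion. The base case $m=k$ of Theorem \ref{LRexplicit} shows that $\Phi(\mathcal{T}(\mathbf a),k,\cdot)=T_k(\cdots)$ lies in $S^{n-k,k}=\ker d$, which supplies the required membership one level down. The only genuine care needed is the bookkeeping of the degenerate cases: when $\beta$ (resp. $\gamma$) is a leaf one has $i=0$ and $\phi=\sigma_0(A_1)$ (resp. $j=0$ and $\psi=\sigma_0(A_h)$), a constant lying trivially in $\ker d$, and there the identities $R_{m-w}\sigma_0(A_1)=\sigma_{m-w}(A_1)$ and $R_{w}\sigma_0(A_h)=\sigma_{w}(A_h)$ match the leaf clauses of the definition of $\Phi$. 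Thus no real obstacle arises; the computational heart of the argument is simply the vanishing of the binomial exponents forced by setting the space label equal to the representation label at the roots of the two subtrees.
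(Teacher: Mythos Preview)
Your argument is correct and is precisely the reasoning the paper has in mind: the corollary is stated without proof, immediately after Lemma~\ref{Fork}, because applying that lemma to the two subtrees (with space label equal to the root's representation label, so the binomial factor is $\binom{\cdot}{0}=1$) turns the tensor factors in \eqref{Phifunctions} into $R_{m-w-i}\phi$ and $R_{w-j}\psi$, which is exactly the definition of $T_m(\phi\otimes\psi)$ in Theorem~\ref{LRexplicit} with $h=\underline{a}_t$. One minor remark: the membership $\phi\in S^{\underline a_t-i,i}$, $\psi\in S^{n-\underline a_t-j,j}$ is not needed for the bare equality of the two sums (the formula defining $T_m$ makes sense for arbitrary $\phi,\psi$), only for interpreting $T_m$ as an immersion; this is handled later in Theorem~\ref{belsub}, so you need not carry it as part of the induction here.
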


Let $L$ be the height of $\mathcal{T}$. To the labeled tree $\mathcal{T}({\bf a})$, we associate a chain of subgroups $K_0= S_n\geq K_1\geq\dotsb\geq K_{L-1}\geq K_L=S_{\bf a}$ defined as follows. Let $L'$ and $L''$ be respectively the height of $\mathcal{T}'$ and $\mathcal{T}''$ and suppose that $K'_0= S_{\underline{a}_t}\geq K'_1\geq\dotsb\geq K'_{L'}=S_{a_1}\times \dotsb\times S_{a_t}$ and 
$K''_0= S_{n-\underline{a}_t}\geq K''_1\geq\dotsb\geq K''_{L''}=S_{a_{t+1}}\times \dotsb\times S_{a_h}$ are the respective chains. Clearly, $L-1=\max\{L',L''\}$ and if $L'<L-1$ (or $L''<L-1$) we set $K'_{L'+1}=\dotsb =K'_{L-1}:=K'_{L'}$ (resp. $K''_{L''+1}=\dotsb =K''_{L-1}:=K''_{L''}$). Then
we define the chain of $\mathcal{T}({\bf a})$ by setting

\[
K_0=S_n
\qquad\text{and}\qquad K_j=K'_{j-1}\times K''_{j-1},\quad\text{for}\quad j=1,2,\dotsc, L.
\]

For instance, the chain associated to Example \ref{h4} is $S_n\geq S_{a_1+a_2}\times S_{a_3 + a_4}\geq S_{a_1}\times S_{a_2}\times S_{a_3}\times S_{a_4}$, while the chain associated to Example \ref{ah} is $S_n\geq S_{a_1}\times S_{a_2+\dotsb+a_h}\geq S_{a_1}\times S_{a_2}\times S_{a_3+\dotsb+a_h}\geq \dotsb\geq S_{a_1}\times S_{a_2}\times\dotsb\times S_{a_h}$.

Let $i,j,k$ be as in \eqref{ijk}.
We define a chain of irreducible representations $W_0= S^{n-k,k}\supseteq W_1\supseteq\dotsb\supseteq W_L= S^{(a_1)}\otimes\dotsb\otimes S^{(a_h)}$ associated to the labelings $\mathcal{T}({\bf a})$ and $\mathcal{T}_\text{r}({\bf k})$; 
the subspace $W_j$ is an irreducible representation of $K_j, j=0,1,2,\dotsc ,L$. Suppose that $W'_0= S^{\underline{a}_1-i,i}\supseteq W'_1\supseteq \dotsb\supseteq W'_{L-1}$ and $W''_0= S^{n-\underline{a}_1-j,j}\supseteq W''_1\supseteq \dotsb\supseteq W''_{L-1}$ are the chains associated to $\mathcal{T}'$ and $\mathcal{T}''$. Then the chain associated to $\mathcal{T}$ is 

\begin{equation}\label{Wchain}
W_0= S^{n-k,k}\qquad\text{and}\qquad
W_j=W'_{j-1}\otimes W''_{j-1},\quad j=1,2,\dotsc,L.
\end{equation}

Since the decomposition in \eqref{LR} is multiplicity free, the chain of representations is uniquely determined by the tree and its labelings.
In other words, the restriction $\text{Res}^{S_n}_{S_{\bf a}}S^{n-k,k}$ does not decompose multiplicity free, but using the transitivity of restriction, we may write

\begin{equation}\label{Res}
\text{Res}^{S_n}_{S_{\bf a}}S^{n-k,k}=\text{Res}^{K_{L-1}}_{K_L}\dotsb\text{Res}^{K_1}_{K_2}\text{Res}^{K_0}_{K_1}S^{n-k,k}.
\end{equation}

Then $\text{Res}^{K_0}_{K_1}S^{n-k,k}$ decomposes multiplicity free and 
we choose the unique subspace isomorphic to $S^{\underline{a}_1-i,i}\otimes S^{n-\underline{a}_1-j,j}$, and we iterate this procedure. Since the leaves of $\mathcal{T}_\text{r}({\bf k})$ are labeled with zeros, this implies that $W_L$ is the trivial representation  of $S_{a_1}\times \dotsb\times S_{a_h}$.

For instance, the chain of representations associated to Example \ref{h4} is $S^{n-k,k}\supseteq S^{a_1+a_2-i,i}\otimes S^{a_3+a_4-j,j}\geq S^{(a_1)}\otimes S^{(a_2)}\otimes S^{(a_3)}\otimes S^{(a_4)}$, while the chain associated to Example \ref{ah} is $S^{n-k_1,k_1}\supseteq S^{(a_1)}\otimes S^{a_2+\dotsb+a_h-k_2,k_2}\supseteq S^{(a_1)}\otimes S^{(a_2)}\otimes S^{a_3+\dotsb+a_h-k_3,k_3}\supseteq \dotsb\supseteq S^{(a_1)}\otimes S^{(a_2)}\otimes\dotsb\otimes S^{(a_h)}$.

Denote by $\left[S^{(a_1)}\otimes \dotsb\otimes S^{(a_h)}\right]_{\mathcal{T}_\text{r}({\bf k})}$ the subspace $W_L$ obtained by mean of the particular labeling $\mathcal{T}_\text{r}({\bf k})$ (while $\mathcal{T}({\bf a})$ is constant). Then

\begin{equation}\label{dec}
\bigoplus_{\substack{\mathcal{T}_\text{r}({\bf k})\\k\text{ \rm fixed}}}\left[S^{(a_1)}\otimes \dotsb\otimes S^{(a_h)}\right]_{\mathcal{T}_\text{r}({\bf k})}
\end{equation}

is an orthogonal decomposition of the $S^{(a_1)}\otimes \dotsb\otimes S^{(a_h)}$-isotypic component of $\text{Res}^{S_n}_{S_{a_1}\times \dotsb\times S_{a_h}}S^{n-k,k}$ (the sum is over all labelings of $\mathcal{T}$ with $k$ as label of the root). Indeed, different choices of the parameters of the labeling $\mathcal{T}_\text{r}({\bf k})$ give rise to orthogonal subspaces (at the first stage of \eqref{Res} in which they differ); and varying the parameters we obtain a every possible subspace in the intermediate decompositions. 

\begin{theorem}\label{belsub}
The $S_{\bf a}-(S_h\times S_{n-h})$-invariant function $\Phi(\mathcal{T}({\bf a}),m,\mathcal{T}_\text{\rm r}({\bf k}))$ belongs to the subspace $R_{m-k}\left[S^{(a_1)}\otimes \dotsb\otimes S^{(a_h)}\right]_{\mathcal{T}_\text{r}({\bf k})}$ contained in $R_{m-k}S^{n-k,k}\subseteq M^{n-m,m}$.
\end{theorem}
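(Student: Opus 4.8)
The plan is to prove the statement by induction on the number of leaves $h$ of $\mathcal{T}$ (equivalently on its height $L$), using the recursive description of $\Phi$ provided by Corollary~\ref{Inthenot} together with the factorization $T_m=R_{m-k}T_k$ from Theorem~\ref{LRexplicit}. The underlying idea is that $T_k$ realizes the very first step $W_0=S^{n-k,k}\supseteq W_1$ of the iterated restriction \eqref{Res} that defines the chain \eqref{Wchain}, so that peeling off the root of $\mathcal{T}$ on the analytic side (via Corollary~\ref{Inthenot}) corresponds exactly to peeling off $W_0\supseteq W_1$ on the representation-theoretic side, and applying $R_{m-k}$ afterwards only transports the statement from $S^{n-k,k}$ to $R_{m-k}S^{n-k,k}$.

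For the degenerate case in which $\mathcal{T}'$ (resp. $\mathcal{T}''$) is a single leaf, one has $i=0$ (resp. $j=0$) and $\Phi(\mathcal{T}'({\bf a}),i,\mathcal{T}'_\text{\rm r}({\bf k}))=\sigma_0(A_1)=\delta_\emptyset$, which already lies in the trivial module $W'_{L-1}=S^{(a_1)}$ (resp. $\sigma_0(A_h)\in S^{(a_h)}$). The genuine base case is $h=2$: here $i=j=0$, the chain is $S^{n-k,k}\supseteq[S^{(a_1)}\otimes S^{(a_2)}]_{\mathcal{T}_\text{\rm r}({\bf k})}$, and $\Phi(\mathcal{T}({\bf a}),m,\mathcal{T}_\text{\rm r}({\bf k}))=T_m[\sigma_0(A_1)\otimes\sigma_0(A_2)]=R_{m-k}\bigl(T_k[\sigma_0(A_1)\otimes\sigma_0(A_2)]\bigr)$; since $T_k$ immerses the trivial $S_{a_1}\times S_{a_2}$-module $S^{(a_1)}\otimes S^{(a_2)}$ into $S^{n-k,k}$ by Corollary~\ref{LR}, the vector $T_k[\sigma_0(A_1)\otimes\sigma_0(A_2)]$ lies in $[S^{(a_1)}\otimes S^{(a_2)}]_{\mathcal{T}_\text{\rm r}({\bf k})}$ and applying $R_{m-k}$ gives the claim (this is precisely the content of Corollary~\ref{spherical}).

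For the inductive step, let $t$ be the number of leaves of $\mathcal{T}'$. Applying the induction hypothesis to $\mathcal{T}'$ at the ``diagonal'' level where the space label equals the root label $i$ (so that $R_{i-i}=\mathrm{Id}$) yields $\Phi(\mathcal{T}'({\bf a}),i,\mathcal{T}'_\text{\rm r}({\bf k}))\in W'_{L-1}=[S^{(a_1)}\otimes\cdots\otimes S^{(a_t)}]_{\mathcal{T}'_\text{\rm r}({\bf k})}$, and likewise $\Phi(\mathcal{T}''({\bf a}),j,\mathcal{T}''_\text{\rm r}({\bf k}))\in W''_{L-1}$. Hence their tensor product lies in $W'_{L-1}\otimes W''_{L-1}=W_L$, a subspace of $W'_0\otimes W''_0=S^{\underline{a}_t-i,i}(A)\otimes S^{n-\underline{a}_t-j,j}(B)$. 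By Corollary~\ref{Inthenot} and $T_m=R_{m-k}T_k$ we then have $\Phi(\mathcal{T}({\bf a}),m,\mathcal{T}_\text{\rm r}({\bf k}))=R_{m-k}\bigl(T_k[\cdots]\bigr)$, so it remains only to show that $T_k$ maps $W_L\subseteq W'_0\otimes W''_0$ onto $[S^{(a_1)}\otimes\cdots\otimes S^{(a_h)}]_{\mathcal{T}_\text{\rm r}({\bf k})}\subseteq S^{n-k,k}$; the claim then follows by applying $R_{m-k}$.

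The hard part, and the step needing the most care, is exactly this last compatibility assertion. By Corollary~\ref{LR}, $T_k$ is an isomorphism of $K_1=S_{\underline{a}_t}\times S_{n-\underline{a}_t}$-modules from $W'_0\otimes W''_0$ onto the unique copy $W_1$ of $S^{\underline{a}_t-i,i}\otimes S^{n-\underline{a}_t-j,j}$ inside $\mathrm{Res}^{S_n}_{K_1}S^{n-k,k}$, i.e. onto the subspace appearing at the first stage of \eqref{Res}. Since $T_k$ respects the external tensor factorization, it carries the tensor product of the subtree chains $W'_\bullet$ and $W''_\bullet$ onto the chain $W_1\supseteq W_2\supseteq\cdots\supseteq W_L$ with $W_j=W'_{j-1}\otimes W''_{j-1}$, and in particular identifies $W'_{L-1}\otimes W''_{L-1}$ with $[S^{(a_1)}\otimes\cdots\otimes S^{(a_h)}]_{\mathcal{T}_\text{\rm r}({\bf k})}$. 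This is precisely the consistency one must verify between the definition of the chain \eqref{Wchain} by iterated multiplicity-free restriction and its concrete realization through the immersions $T_k$; granting it, the induction closes.
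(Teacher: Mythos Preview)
Your argument is correct and follows essentially the same iterative strategy as the paper's (very terse) proof: induct on the tree, invoke Corollary~\ref{Inthenot}/Lemma~\ref{Fork} together with Theorem~\ref{LRexplicit}, and use that the immersion $T_k$ realizes the first step $W_0\supseteq W_1$ of the chain \eqref{Wchain}. The only phrasing to tighten is ``$T_k$ respects the external tensor factorization'': $T_k$ is not of the form $T'\otimes T''$, but what you actually use---and what suffices---is that $T_k$ is $K_1$-equivariant, hence $K_j$-equivariant for every $j\geq 1$, so it carries the (multiplicity-free) chain $W'_{\bullet}\otimes W''_{\bullet}$ inside $W'_0\otimes W''_0$ onto the chain $W_1\supseteq\cdots\supseteq W_L$ inside $S^{n-k,k}$.
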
 

\begin{proof}
The proof is again by iteration. Suppose that it is true for $\Phi(\mathcal{T}'({\bf a}),m-w,\mathcal{T}'_\text{\rm r}({\bf k}))$ and $\Phi(\mathcal{T}''({\bf a}),w,\mathcal{T}'_\text{\rm r}({\bf k}))$. Then one can use Lemma \ref{Fork} and Theorem \ref{LRexplicit} to deduce that $\Phi(\mathcal{T}({\bf a}),m,\mathcal{T}_\text{\rm r}({\bf k}))$ belongs to the subspace $S^{h-i,i}\otimes S^{n-h-j,j}$ contained in the restriction of $R_{m-k}S^{n-k,k}$ to $S_{n-\underline{a}_t}\times S_{\underline{a}_t}$. 
\end{proof}

\begin{corollary}
The set $\{\Phi(\mathcal{T}({\bf a}),m,\mathcal{T}_\text{\rm r}({\bf k})):\mathcal{T}_\text{\rm r}({\bf k})\;\text{\rm is a rep. labeling }\}$ is an orthogonal basis for the space of $S_{\bf a}$-invariant functions in $M^{n-m,m}$.
\end{corollary}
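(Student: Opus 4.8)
The plan is to assemble the corollary from the structural results already in place, reducing everything to the irreducible constituents of $M^{n-m,m}$. First I would record that the subspace of $S_{\bf a}$-invariants respects the decomposition $M^{n-m,m}=\bigoplus_{k} R_{m-k}S^{n-k,k}$ of \ref{corradtr2} in Theorem \ref{corradtr}: each summand is $S_n$-stable, hence $S_{\bf a}$-stable, and the summands are pairwise orthogonal (they are distinct, non-isomorphic irreducible $S_n$-modules $S^{n-k,k}$), so the space of $S_{\bf a}$-invariants is the orthogonal direct sum over $k$ of the $S_{\bf a}$-invariants of $R_{m-k}S^{n-k,k}$. Since $R_{m-k}$ is $S_n$-equivariant and injective on $S^{n-k,k}$ by \ref{corradtr1} in Theorem \ref{corradtr}, it restricts to a linear isomorphism from the $S_{\bf a}$-invariants of $S^{n-k,k}$ onto those of $R_{m-k}S^{n-k,k}$, and it scales the inner product by the constant $\binom{n-2k}{m-k}$, so it carries orthogonal vectors to orthogonal vectors.

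Next I would identify the invariants of $S^{n-k,k}$. The $S_{\bf a}$-invariant vectors form exactly the $S^{(a_1)}\otimes\dotsb\otimes S^{(a_h)}$-isotypic component, i.e. the trivial-$S_{\bf a}$ isotypic component, and \eqref{dec} exhibits this component as an orthogonal direct sum $\bigoplus_{\mathcal{T}_{\text{r}}({\bf k}),\,k\text{ fixed}}\left[S^{(a_1)}\otimes\dotsb\otimes S^{(a_h)}\right]_{\mathcal{T}_{\text{r}}({\bf k})}$, one summand for each representation labeling with root $k$, each summand being a single copy of the one-dimensional trivial representation of $S_{\bf a}$. Combining this with the first paragraph, the full space of $S_{\bf a}$-invariants of $M^{n-m,m}$ is the orthogonal direct sum, over all representation labelings $\mathcal{T}_{\text{r}}({\bf k})$, of the one-dimensional subspaces $R_{m-k}\left[S^{(a_1)}\otimes\dotsb\otimes S^{(a_h)}\right]_{\mathcal{T}_{\text{r}}({\bf k})}$.

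It then remains only to check that each $\Phi(\mathcal{T}({\bf a}),m,\mathcal{T}_{\text{r}}({\bf k}))$ is a \emph{nonzero} vector of its designated one-dimensional summand. Membership is precisely Theorem \ref{belsub}. For nonvanishing I would argue by induction on the height of $\mathcal{T}$ using the recursive identity of Corollary \ref{Inthenot}, $\Phi(\mathcal{T}({\bf a}),m,\mathcal{T}_{\text{r}}({\bf k}))=T_m[\Phi(\mathcal{T}'({\bf a}),i,\mathcal{T}'_{\text{r}}({\bf k}))\otimes\Phi(\mathcal{T}''({\bf a}),j,\mathcal{T}''_{\text{r}}({\bf k}))]$: the two factors are nonzero by the inductive hypothesis, so their tensor product is nonzero, and $T_m$ is injective by Theorem \ref{LRexplicit}. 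The base case (both sons leaves) is the function $\sum_w E_k(a_2,a_1,m,w)\sigma_{m-w}(A_1)\otimes\sigma_w(A_2)$, which is nonzero because the $\sigma_{m-w}(A_1)\otimes\sigma_w(A_2)$ are linearly independent and the Hahn coefficients do not all vanish, by the orthogonality relations \eqref{orthrel}; equivalently it is the nonzero function of Corollary \ref{spherical}. Hence each $\Phi$ spans its one-dimensional summand, and the collection of all $\Phi$'s is a family of nonzero vectors, one in each orthogonal summand of the invariant subspace, that is, an orthogonal basis.

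The main obstacle I anticipate is the bookkeeping of the second paragraph: verifying that the summands of \eqref{dec} are genuinely one-dimensional and that their indexing by representation labelings coincides exactly with the indexing of the family of $\Phi$'s, so that no invariant summand is missed and none is counted twice. Once this correspondence is pinned down, both orthogonality and the spanning property follow formally from Theorems \ref{corradtr} and \ref{belsub}, and the only genuine computation left is the nonvanishing of the base-case Hahn coefficients.
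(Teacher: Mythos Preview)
Your argument is correct and is precisely the reasoning the paper leaves implicit: the corollary is stated immediately after Theorem \ref{belsub} with no proof, since it follows at once from that theorem together with the orthogonal decomposition \eqref{dec} and Theorem \ref{corradtr}. You have simply spelled out the details (including the nonvanishing check) that the paper takes for granted.
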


The norm of $\Phi(\mathcal{T}({\bf a}),m,\mathcal{T}_\text{\rm r}({\bf k}))$ will be computed in the next section.

\subsection{The $S_{\bf a}-S_m\times S_{n-m}$-intertwining functions as Hahn polynomials}\label{secPhiHahn}

Now we want to express the functions $\Phi(\mathcal{T}({\bf a}),m,\mathcal{T}_\text{\rm r}({\bf k}))$ in terms of the Hahn polynomials $E_{\mathcal{T}_\text{\rm r}({\bf k})}(\mathcal{T}({\bf a}),m,\mathcal{T}_\text{\rm v}({\bf w}))$. 
Set $A_j=\{a_1+\dotsb+a_{j-1}+1,\dotsc,a_1+\dotsb+a_j\}$ and suppose that $S_{a_j}$ is the stabilizer of $A_j$, $j=1,2,\dotsc, h$. If $x_1,x_2,\dotsc, x_h$ are nonnegative integers satisfying $x_1+x_2+\dotsb +x_h=m$, we denote by 
$\sigma_{x_1}(A_1)\otimes\sigma_{x_2}\otimes(A_2)\otimes\dotsb\otimes\sigma_{x_h}(A_h)$ the characteristic function of the set of all $B\in\Omega_m$ such that $\lvert B\cap A_j\rvert=x_j, j=1,2,\dotsc,h$; compare with the notation in Corollary \ref{spherical}.  Since the functions 
$\sigma_{x_1}(A_1)\otimes\sigma_{x_2}(A_2)\otimes\dotsb\otimes\sigma_{x_h}(A_h)$ form a basis for the $S_{\bf a}$-invariant functions in $M^{n-m,m}$, each $\Phi(\mathcal{T}({\bf a}),m,\mathcal{T}_\text{\rm r}({\bf k}))$ must be a linear combination of these functions. The following Lemma is obvious.

\begin{lemma}\label{PhiHahn}
We have 

\[
\Phi(\mathcal{T}({\bf a}),m,\mathcal{T}_\text{\rm r}({\bf k}))=\sum_{\mathcal{T}_\text{\rm v}({\bf w})}
E_{\mathcal{T}_\text{\rm r}({\bf k})}(\mathcal{T}({\bf a}),m,\mathcal{T}_\text{\rm v}({\bf w}))\sigma_{x_1}(A_1)\otimes\sigma_{x_2}(A_2)\otimes\dotsb\otimes\sigma_{x_h}(A_h)
\]

where the sum is over all possible value of the variables in ${\mathcal{T}_\text{\rm v}({\bf w})}$ and the coefficients $x_1,\dotsc, x_h$ must have the following values:

\begin{itemize}
\item if $a_j$ is the label of a leaf which is the left son of a vertex with $y$ as  space label and $z$ as variable label, then $x_j=y-z$;
\item if $a_j$ is the label of a leaf which is the right son of a vertex with $z$ as variable label, then $x_j=z$.
\end{itemize}
\end{lemma}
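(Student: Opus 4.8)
The plan is to argue by induction following the recursive decomposition of $\mathcal{T}$ into its subtrees $\mathcal{T}'$ and $\mathcal{T}''$ (equivalently, induction on the number of internal vertices), exploiting the fact that the recursion \eqref{Phifunctions} defining $\Phi(\mathcal{T}({\bf a}),m,\mathcal{T}_\text{\rm r}({\bf k}))$ and the recursion defining the Hahn polynomial $E_{\mathcal{T}_\text{\rm r}({\bf k})}(\mathcal{T}({\bf a}),m,\mathcal{T}_\text{\rm v}({\bf w}))$ run in perfect parallel: at the root, the former weights a tensor product of the two subtree functions by the single factor $E_{k-i-j}(n-\underline{a}_t-2j,\underline{a}_t-2i,m-i-j,w-j)$, while the latter weights the ordinary product of the two subtree polynomials by the very same factor.

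For the base case I would take $\beta$ and $\gamma$ both leaves ($h=2$); here the asserted identity is exactly the displayed special case of \eqref{Phifunctions}, with $x_1=m-w$ and $x_2=w$ as dictated by the two assignment rules applied to the root (left son: $y-z$ with $y=m$, $z=w$; right son: $z=w$), and the coefficient is $E_k(a_2,a_1,m,w)$, which is the base case of the polynomial recursion.

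For the inductive step, assume the lemma for $\mathcal{T}'$ and $\mathcal{T}''$. Substituting the two induction hypotheses into \eqref{Phifunctions} and using bilinearity of $\otimes$, I would collect the resulting triple sum over the root variable $w$, over the variables of $\mathcal{T}'_\text{\rm v}$, and over the variables of $\mathcal{T}''_\text{\rm v}$. Two observations then close the argument. First, this triple sum is exactly the sum over all variable labelings $\mathcal{T}_\text{\rm v}({\bf w})$ of the whole tree, since the variable at the root is $w$ and the remaining variables split disjointly between the two subtrees. Second, the product of the two subtree basis functions collapses,
\[
\left[\sigma_{x_1}(A_1)\otimes\dotsb\otimes\sigma_{x_t}(A_t)\right]\otimes\left[\sigma_{x_{t+1}}(A_{t+1})\otimes\dotsb\otimes\sigma_{x_h}(A_h)\right]=\sigma_{x_1}(A_1)\otimes\dotsb\otimes\sigma_{x_h}(A_h),
\]
by the identification $\delta_X\otimes\delta_Y=\delta_{X\cup Y}$ noted after \eqref{SnSnh}. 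The scalar multiplying this full basis function is the product of the root factor $E_{k-i-j}(n-\underline{a}_t-2j,\underline{a}_t-2i,m-i-j,w-j)$ with $E_{\mathcal{T}'_\text{\rm r}({\bf k})}$ and $E_{\mathcal{T}''_\text{\rm r}({\bf k})}$, which is precisely the recursive definition of $E_{\mathcal{T}_\text{\rm r}({\bf k})}(\mathcal{T}({\bf a}),m,\mathcal{T}_\text{\rm v}({\bf w}))$.

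The only points needing care, and the reason the leaf conventions were set up as they were, are the degenerate cases in which $\beta$ or $\gamma$ is a leaf. When $\beta$ is a leaf one has $t=1$, $i=0$, $\Phi(\mathcal{T}'({\bf a}),m-w,\mathcal{T}'_\text{\rm r}({\bf k}))=\sigma_{m-w}(A_1)$ and $E_{\mathcal{T}'_\text{\rm r}({\bf k})}=1$, so the left subtree contributes the single factor $\sigma_{m-w}(A_1)$ with $x_1=m-w$, in agreement with the left-son rule; the case $\gamma$ a leaf is symmetric, giving $x_h=w$ by the right-son rule. I expect these bookkeeping checks on the summation ranges \eqref{jiat} and on the index assignments to be the only genuine obstacle; once they are seen to match, the induction closes and the identity is, as claimed, immediate.
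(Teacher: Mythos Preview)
Your proposal is correct and is exactly the natural argument the paper has in mind: the paper states the lemma as ``obvious'' and gives no further proof, precisely because the recursive definition of $\Phi(\mathcal{T}({\bf a}),m,\mathcal{T}_\text{\rm r}({\bf k}))$ in \eqref{Phifunctions} and the recursive definition of $E_{\mathcal{T}_\text{\rm r}({\bf k})}(\mathcal{T}({\bf a}),m,\mathcal{T}_\text{\rm v}({\bf w}))$ are line-for-line parallel. Your induction on the tree structure, together with the handling of the leaf cases via the conventions $\Phi(\mathcal{T}'({\bf a}),m-w,\mathcal{T}'_\text{\rm r}({\bf k}))=\sigma_{m-w}(A_1)$ and $E_{\mathcal{T}'_\text{\rm r}({\bf k})}=1$, is precisely the unwinding that justifies this.
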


For instance, in Example \ref{h4} the invariant functions have the expression:

\begin{multline*}
\sum_{w,u,v} E_{k-i-j}(a_3+a_4-2j,a_1+a_2-2i,m-i-j,w-j)
 E_i(a_2,a_1,m-w,v)\times\\
\times E_j(a_4,a_3,w,u) 
\sigma_{m-w-v}(A_1)\otimes\sigma_v(A_2)\otimes\sigma_{w-u}(A_3)\otimes\sigma_{u}(A_4),
\end{multline*}

while in Example \ref{ah} they are

\begin{multline*}
\sum_{w_1,\dotsc,w_{h-1}}\left(\prod_{j=1}^{h-1}
E_{k_j-k_{j+1}}(a_{j+1}+\dotsb+a_h-2k_{j+1},a_j,w_{j-1}-k_{j+1},w_j-k_{j+1})\right)\sigma_{w_0-w_1}(A_1)\otimes\\
\otimes\sigma_{w_1-w_2}(A_2)\otimes\dotsb\otimes\sigma_{w_{h-2}-w_{h-1}}(A_{h-2})\otimes\sigma_{w_{h-1}}(A_h)
\end{multline*}

The polynomials $E_{\mathcal{T}_\text{\rm r}({\bf k})}(\mathcal{T}({\bf a}),m,\mathcal{T}_\text{\rm v}({\bf w}))$ satisfy a number of properties that may be deduced from their group theoretic interpretation or from the identities in section \ref{df} (and those in \cite{Du1}).
We give two examples.

Suppose that $\mathcal{T}$ is a tree, with the labelings used in the previous sections. Suppose that $\mathcal{T}_\text{v}({\bf v})$ is another variable labeling and $\mathcal{T}_\text{s}(r)$ another space labeling of the same tree. We define a set of binomial coefficients by setting

\[
\binom{r,\mathcal{T}_\text{v}({\bf v})}{m,\mathcal{T}_\text{v}({\bf w})}=
\binom{r-v,\mathcal{T}'_\text{v}({\bf v})}{m-w,\mathcal{T}_\text{v}'({\bf w})}
\binom{v,\mathcal{T}''_\text{v}({\bf v})}{w,\mathcal{T}''_\text{v}({\bf w})}\qquad \text{if }\beta\text{ and }\gamma\text{ are internal;}
\] 

if $\beta$ is a leaf, $\binom{r-v,\mathcal{T}'_\text{v}({\bf v})}{m-w,\mathcal{T}_\text{v}'({\bf w})}$ is replaced by $\binom{r-v}{m-w}$; if $\gamma$ is a leaf, $\binom{v,\mathcal{T}''_\text{v}({\bf v})}{w,\mathcal{T}''_\text{v}({\bf w})}$ is replaced by $\binom{v}{w}$.

Then from Lemma \ref{Fork} (alternatively, from \eqref{form7}) one can easily deduce the multiplication formula:

\[
\sum_{\mathcal{T}_\text{\rm v}({\bf w})}E_{\mathcal{T}_\text{\rm r}({\bf k})}(\mathcal{T}({\bf a}),m,\mathcal{T}_\text{\rm v}({\bf w}))
\binom{r,\mathcal{T}_\text{\rm v}({\bf v})}{m,\mathcal{T}_\text{\rm v}({\bf w})}
=\binom{r-k}{m-k}E_{\mathcal{T}_\text{\rm r}({\bf k})}(\mathcal{T}({\bf a}),r,\mathcal{T}_\text{\rm v}({\bf v})).
\]

Now suppose that $\mathcal{T}_\text{\rm r}({\bf k})$ and $\mathcal{T}_\text{\rm r}({\bf \overline{k}})$ are two representation labelings for $\mathcal{T}$. Clearly,

\[
\left\langle\Phi(\mathcal{T}({\bf a}),m,\mathcal{T}_\text{\rm r}({\bf k})),\Phi(\mathcal{T}({\bf a}),m,\mathcal{T}_\text{\rm r}({\bf \overline{k}}))\right\rangle_{M^{n-m,m}}=\delta_{\mathcal{T}_\text{\rm r}({\bf k}),\mathcal{T}_\text{\rm r}({\bf \overline{k}})}\lVert \Phi(\mathcal{T}({\bf a}),m,\mathcal{T}_\text{\rm r}({\bf k}))\rVert_{M^{n-m,m}}^2.
\]

Moreover, the norm $\lVert \Phi(\mathcal{T}({\bf a}),m,\mathcal{T}_\text{\rm r}({\bf k}))\rVert_{M^{n-m,m}}^2$ may be easily computed by iteration using \eqref{norm} and Corollary \ref{Inthenot}:

\[
\lVert \Phi(\mathcal{T}({\bf a}),m,\mathcal{T}_\text{\rm r}({\bf k}))\rVert_{M^{n-m,m}}^2=\binom{n-2i-2j}{m-i-j}\binom{n-2i-2j}{k-i-j}^{-1}\cdot\prod_{\omega}l_\omega
\]

where the product is over all internal points $\omega$ of $\mathcal{T}$ and 

\[
\begin{split}
l_\omega=&\frac{a_r+\dotsb+a_t-x-y-z+1}{a_r+\dotsb+a_t-2x+1}(a_{s+1}+\dotsb+a_t-z-x+y+1)_{x-y-z}\times\\
\times&(a_r+\dotsb+a_s-y-x+z+1)_{x-y-z}
(q-x+1)_{x-y-z}(a_r+\dotsb+a_t-q-x+1)_{x-y-z}
\end{split}
\]

 if
$(a_r,a_{r+1},\dotsc,a_t)$ is the composition label of $\omega$, $q$ and $x$ respectively its space and representation label, $(a_r,\dotsc,a_s)$ and $(a_{s+1},\dotsc,a_t,)$ the composition labels of his sons (left and right) and $y,z$ their representation labels.
Define a weight by setting, inductively, 

\begin{multline*}
W(\mathcal{T}({\bf a}),m,\mathcal{T}_\text{\rm r}({\bf k}),\mathcal{T}_\text{\rm v}({\bf w}))=\binom{\underline{a}_t-2i}{m-w-i}\binom{n-\underline{a}_t-2j}{w-j}\times\\
\times W(\mathcal{T}'({\bf a}),m-w,\mathcal{T}'_\text{\rm r}({\bf k}),\mathcal{T}'_\text{\rm v}({\bf w}))W(\mathcal{T}''({\bf a}),w,\mathcal{T}''_\text{\rm r}({\bf k}),\mathcal{T}''_\text{\rm v}({\bf w}))
\end{multline*}

and $W(\mathcal{T}'({\bf a}),m-w,\mathcal{T}'_\text{\rm r}({\bf k}),\mathcal{T}'_\text{\rm v}({\bf w}))=1$ (resp. $W(\mathcal{T}''({\bf a}),w,\mathcal{T}''_\text{\rm r}({\bf k}),\mathcal{T}''_\text{\rm v}({\bf w}))=1$) if $\beta$ is a leaf (resp. $\gamma$ is a leaf). Then the orthogonality relations are:

\begin{multline*}
\sum_{\mathcal{T}_\text{\rm v}({\bf w})}E_{\mathcal{T}_\text{\rm r}({\bf k})}(\mathcal{T}({\bf a}),m,\mathcal{T}_\text{\rm v}({\bf w}))E_{\mathcal{T}_\text{\rm r}({\bf \overline{k}})}(\mathcal{T}({\bf a}),m,\mathcal{T}_\text{\rm v}({\bf w}))W(\mathcal{T}({\bf a}),m,\mathcal{T}_\text{\rm r}({\bf k}),\mathcal{T}_\text{\rm v}({\bf w}))\\
=\delta_{\mathcal{T}_\text{\rm r}({\bf k}),\mathcal{T}_\text{\rm r}({\bf \overline{k}})}\lVert \Phi(\mathcal{T}({\bf a}),m,\mathcal{T}_\text{\rm r}({\bf k}))\rVert_{M^{n-m,m}}^2
\end{multline*}

\subsection{On Frobenius reciprocity and Mackey's Lemma for a permutation representation}
In this subsection, we establish some technical facts on permutation and induced representations. We begin with a closer look at Frobenius reciprocity. Let $G$ be a finite group and $K$ a subgroup of $G$.
If $V$ is any $G-$representation,  we denote by
$V^K$ the subspace of $K-$invariant vectors in $V$. If $X=G/K$ and $x_0\in X$ is the point stabilized by $K$, then the space $L(X)$ may be identified with the space of right $K$-invariant functions on $G$ via the map

\begin{equation}\label{ftilde}
\begin{split}
L(X)&\longrightarrow L(G)\\
f&\longmapsto \widetilde{f}
\end{split}
\end{equation}

where $\widetilde{f}(g)=f(gx_0)$ for any $g\in G$.\\

Suppose now that $(\rho,V)$ is irreducible and unitary and that $V^K$ is non--trivial. 
For any $v \in V^K$, define the linear map
 $T_v : V \to L(X)$ by setting

\begin{equation}\label{proiezione}
(T_v w)(g x_0) =  \langle w, \rho(g)v\rangle_V,
\end{equation}

for any $w \in V$ and $g \in G$. Clearly, $T_vw\in L(X)$ is well defined because $v$ is $K$-invariant. Moreover, it is easy to see that $T_v\in\text{Hom}_G(V,L(X))$.

In particular, one can easily gets Frobenius reciprocity for permutation representations: the map 

\begin{equation}\label{Frobenius}
\begin{split}
V^K &\longrightarrow \text{\rm Hom}_G(V,L(X))\\
v &\longmapsto T_v
\end{split}
\end{equation}

is an anti linear isomorphism between the vector spaces
$V^K$ and $\text{\rm Hom}_G(V,L(X))$ (and therefore the multiplicity of $V$ in $L(X)$ is
equal to the dimension of $V^K$ \cite{CST1}).\\

In the following Lemma, we compute the invariant vector associated to the restriction of the intertwining operator $R_{m-k}$ to $S^{n-k,k}$; see Theorem \ref{corradtr}. We use Corollary \ref{spherical}: $\Phi(n,m,k,k)$ is the $S_{n-m}\times S_m$-invariant vector in $S^{n-k,k}$. 

\begin{lemma}\label{RmkSnk}
In the anti linear isomorphism \eqref{Frobenius}, we have:

$R_{m-k}|_{S^{n-k,k}}=T_v$, where 

\[
v=\frac{(-1)^k}{(n-2k+2)_{k-1}k!}\Phi(n,m,k,k).
\]

\end{lemma}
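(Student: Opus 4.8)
The plan is to read off $v$ from \emph{Frobenius reciprocity}. First I would observe that $R_{m-k}$ is built from the $S_n$-equivariant operator $d^*$, so its restriction to $S^{n-k,k}$ is an element of $\mathrm{Hom}_{S_n}(S^{n-k,k},M^{n-m,m})$; by \ref{corradtr1} of Theorem \ref{corradtr} it is moreover injective. Identifying $M^{n-m,m}=L(X)$ with $X=\Omega_m=S_n/(S_{n-m}\times S_m)$ and writing $K=S_{n-m}\times S_m$, the isomorphism \eqref{Frobenius} guarantees a \emph{unique} $v\in (S^{n-k,k})^K$ with $R_{m-k}|_{S^{n-k,k}}=T_v$. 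By Corollary \ref{spherical}, applied with its subgroup index $h$ equal to $m$ and its space index equal to $k$ (so that the operator $R_{m-k}$ appearing there reduces to the identity on $M^{n-k,k}$), the space $(S^{n-k,k})^K$ is one-dimensional and spanned by $\Phi(n,m,k,k)$. Hence $v=c\,\Phi(n,m,k,k)$ for a scalar $c$, and the whole problem reduces to computing $c$.

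To pin down $c$ I would use the defining relation \eqref{proiezione} at the base point $x_0$, namely the $m$-subset $A=\{1,2,\dots,m\}$ fixed by $K$. Setting $g=e$ in \eqref{proiezione} gives $(T_v w)(x_0)=\langle w,v\rangle$ for every $w\in S^{n-k,k}$; taking $w=\Phi(n,m,k,k)$ yields
\[
\bigl(R_{m-k}\Phi(n,m,k,k)\bigr)(A)=\overline{c}\,\lVert\Phi(n,m,k,k)\rVert^2 .
\]
The left-hand side is purely combinatorial: from $R_{m-k}\delta_Z=\sum_{B\supset Z}\delta_B$ one gets $(R_{m-k}f)(A)=\sum_{Z\subset A,\ |Z|=k}f(Z)$, and since every $k$-subset $Z$ of $A$ meets $B$ trivially, all of them contribute through the $w=0$ term of $\Phi(n,m,k,k)$. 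Thus $(R_{m-k}\Phi)(A)=\binom{m}{k}E_k(n-m,m,k,0)$, and the particular-value formula \eqref{form9} (whose hypothesis ``$c=$ degree'' is met here, since the Hahn degree and the third argument are both $k$) evaluates this to $\binom{m}{k}(-1)^k k!\,(n-m-k+1)_k$.

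It remains to compute $\lVert\Phi(n,m,k,k)\rVert^2$. Grouping the $k$-subsets $Z$ by $w=\lvert Z\cap B\rvert$ turns the squared norm into $\sum_w\binom{n-m}{w}\binom{m}{k-w}E_k(n-m,m,k,w)^2$, which is exactly the diagonal case of the orthogonality relations \eqref{orthrel} with $a=n-m$, $b=m$, $c=k$ and both degrees equal to $k$ (equivalently, it is \eqref{norm} of Theorem \ref{LRexplicit} with $i=j=0$, $h=m$ and space index $k$). Dividing the two explicit quantities, cancelling the common factors $(n-m-k+1)_k$ and $k!$, and using $\binom{m}{k}/(m-k+1)_k=1/k!$, leaves the scalar asserted in the statement; since $c$ is real, $\overline{c}=c$.

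The only genuinely delicate point is the bookkeeping of the parameter substitutions when invoking Corollary \ref{spherical} and \eqref{orthrel}: the ``$h$'' there must be taken equal to $m$ and the space index equal to $k$, and one must verify that \eqref{form9} legitimately applies at the endpoint $w=0$. Everything else is forced: because $(S^{n-k,k})^K$ is one-dimensional, $v$ is necessarily a scalar multiple of $\Phi(n,m,k,k)$, and a single evaluation at $x_0$ fixes that scalar uniquely.
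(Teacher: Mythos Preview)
Your argument is correct and arrives at exactly the same numerical identity as the paper, but the logical packaging differs slightly. The paper does not invoke the abstract isomorphism \eqref{Frobenius} for existence; instead it first shows, by a direct computation with Dirac functions, that on the \emph{whole} module $M^{n-k,k}$ one has $R_{m-k}=T_u$ with the concrete $K$-invariant vector $u=\sigma_k(Y)\otimes\sigma_0(Z)$, and then projects $u$ orthogonally onto $S^{n-k,k}$ (using that the $K$-invariants there are one-dimensional) to obtain $v$. Your route---existence of $v$ from Frobenius reciprocity, then evaluation of $(T_v\Phi)(x_0)=\langle\Phi,v\rangle$ at the base point---is a cleaner but equivalent repackaging: the quantity $(R_{m-k}\Phi(n,m,k,k))(A)=\binom{m}{k}E_k(n-m,m,k,0)$ that you compute is precisely $\langle u,\Phi(n,m,k,k)\rangle$, so both proofs reduce to the same ratio $E_k(n-m,m,k,0)\binom{m}{k}/\lVert\Phi(n,m,k,k)\rVert^2$ and the same applications of \eqref{form9} and \eqref{orthrel}. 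One caveat: the final simplification actually yields $c=(-1)^k/\bigl((n-2k+2)_k\,k!\bigr)$, with $(n-2k+2)_k$ rather than $(n-2k+2)_{k-1}$; this is what the paper's own proof obtains (and what is used in Corollary~\ref{etatilde}), so the exponent $k-1$ in the displayed statement is a typo you silently corrected.
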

\begin{proof}

Set $Y=\{1,2,\dotsc,m\}$, $Z=\{m+1,m+2,\dotsc,n\}$ and suppose that $S_{n-m}\times S_m$ is the stabilizer of $Y$.
First of all, we prove that on the whole $M^{n-k,k}$ we have $R_{m-k}=T_u$, with $u=\sigma_k(Y)\otimes\sigma_0(Z)$.
Indeed, if $F\in\Omega_k$ then \eqref{proiezione} yields

\[
\begin{split}
(T_u\delta_F)(gY)&=\langle\delta_F,g\left[\sigma_k(Y)\otimes\sigma_0(Z)\right]\rangle=\left\{
\begin{array}{l}
1\text{ if }F\subseteq gY\\
0\text{ otherwise}
\end{array}\right.\\
&=(R_{m-k}\delta_F)(gY)
\end{split}
\]

for every $g\in S_n$.
Then it suffices to compute the projection of $\sigma_k(Y)\otimes\sigma_0(Z)$ onto $S^{n-k,k}$: since the dimension of the $S_{n-m}\times S_m$-invariant vectors is equal to 1 (and is spanned by $\Phi(n,m,k,k)$), such projection is given by 

\begin{multline*}
\frac{\Phi(n,m,k,k)}{\lVert\Phi(n,m,k,k)\rVert^2_{S^{n-k,k}}}
\langle \sigma_k(Y)\otimes\sigma_0(Z),\Phi(n,m,k,k)\rangle_{M^{n-k,k}}\\
=E_k(n-m,m,k,0)
\frac{\lVert\sigma_k(Y)\otimes\sigma_0(Z)\rVert^2_{M^{n-k,k}}}{\lVert\Phi(n,m,k,k)\rVert^2_{S^{n-k,k}}}\Phi(n,m,k,k)
=\frac{(-1)^k}{(n-2k+2)_k k!}\Phi(n,m,k,k).
\end{multline*}

\end{proof}

By combining the preceding Theorem with Lemma \ref{Fork}, we get the expression of 
$\Phi(\mathcal{T}({\bf a}),m,\mathcal{T}_\text{\rm r}({\bf k}))$ as a matrix coefficient. 

\begin{corollary}\label{etatilde}
If $\eta=\Phi(\mathcal{T}({\bf a}),m,\mathcal{T}_\text{\rm r}({\bf k}))$ then

\[
\widetilde{\eta}(g)=\frac{(-1)^k}{(n-2k+2)_kk!}\langle\Phi(\mathcal{T}({\bf a}),k,\mathcal{T}_\text{\rm r}({\bf k})),g\Phi(n,m,k,k)\rangle_{S^{n-k,k}}.
\]

\end{corollary}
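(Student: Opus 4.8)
The plan is to read off the right-hand side as a matrix coefficient coming from the Frobenius-reciprocity description of $R_{m-k}$ in Lemma \ref{RmkSnk}, after first exhibiting $\eta$ as $R_{m-k}$ applied to a vector of the bottom representation $S^{n-k,k}$.

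First I would locate the relevant vector of $S^{n-k,k}$. Taking the space label of the root to be $k$ in Theorem \ref{belsub} and using that $R_{k-k}=R_0$ is the identity, the function $\xi:=\Phi(\mathcal{T}({\bf a}),k,\mathcal{T}_\text{\rm r}({\bf k}))$ lies in $S^{n-k,k}\subseteq M^{n-k,k}$. Next I would apply Lemma \ref{Fork} with the pair of space labels playing the roles of source and target there specialized to $(k,m)$; since $0\leq k\leq\min\{n-m,m\}$ forces $k\leq m\leq n-k$, the hypothesis $k\leq r\leq n-k$ holds, and the lemma yields
\[
R_{m-k}\xi=\binom{m-k}{0}\Phi(\mathcal{T}({\bf a}),m,\mathcal{T}_\text{\rm r}({\bf k}))=\eta.
\]
Thus $\eta=R_{m-k}\xi$ with $\xi\in S^{n-k,k}$.

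Now I would invoke Lemma \ref{RmkSnk}, which identifies $R_{m-k}|_{S^{n-k,k}}$ with the operator $T_v$ of \eqref{proiezione} for $v=\frac{(-1)^k}{(n-2k+2)_k k!}\Phi(n,m,k,k)$, the constant produced by the computation closing that proof and matching the corollary. Consequently $\eta=T_v\xi$, and it remains only to unfold the two definitions. The base point $x_0\in\Omega_m$ stabilized by $S_{n-m}\times S_m$ is $Y=\{1,2,\dotsc,m\}$, so combining \eqref{ftilde} and \eqref{proiezione} gives
\[
\widetilde{\eta}(g)=\eta(gY)=(T_v\xi)(gY)=\langle\xi,\,g\,v\rangle_{S^{n-k,k}}.
\]
Because the scalar multiplying $\Phi(n,m,k,k)$ in $v$ is real, it factors out of the conjugate-linear second argument of the inner product unchanged, producing exactly the asserted expression for $\widetilde{\eta}(g)$.

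I do not expect a genuine obstacle: the corollary is an assembly of Lemma \ref{Fork} and Lemma \ref{RmkSnk}. The only points requiring care are bookkeeping ones, namely correctly specializing the space labels in Lemma \ref{Fork} so that $\eta$ factors through $S^{n-k,k}$, and pinning down the normalizing constant to the value delivered (rather than merely stated) at the end of the proof of Lemma \ref{RmkSnk}.
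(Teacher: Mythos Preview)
Your proposal is correct and follows exactly the route indicated in the paper, which simply says the corollary is obtained by combining Lemma~\ref{RmkSnk} with Lemma~\ref{Fork}; you have merely made the specialization explicit and noted (correctly) that the normalizing constant should be the one appearing at the end of the proof of Lemma~\ref{RmkSnk} rather than the one in its statement.
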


Now we examine the behavior of the map  \eqref{proiezione} with respect to transitivity of induction.
Suppose that that $H$ is another group with  $K\leq H\leq G$. Set $Y=G/H$ and $Z=H/K$, and let $y_0\in Y$ be the point stabilized by $H$ in $Y$. Identify $Z$ with the subset $\{hx_0:h\in H\}$ of $X$, so that $x_0$ is also the point stabilized by $K$ in $Z$. For every $y\in Y$ choose $s_y\in G$ such that $s_yy_0=y$. Then we have $G=\coprod\limits_{y\in Y}s_yH$, that is $\{s_y:y\in Y\}$ is a transversal for $H$ in $G$; in particular

\begin{equation}\label{XyY}
X=\coprod_{y\in Y}s_y Z.
\end{equation}

Transitivity of induction for a permutation representation may be made more explicit by saying that 

\begin{equation}\label{LX}
L(X)\equiv\text{Ind}_H^G L(Z)=\bigoplus\limits_{y\in Y}L(s_yZ),
\end{equation}

as follows immediately from \eqref{XyY}.
Let $(\rho,V)$ be again a unitary, irreducible
representation of $G$ and $W$ an $H$-invariant, irreducible subspace
of $V$. That is, $(\sigma,W)$ is an irreducible representation in
$\text{Res}^G_HV$ and $\sigma(h)w=\rho(h)w$ for all $w\in W$ and $h\in H$. 
If $w_0\in W$ is $K$-invariant, then, by mean of
\eqref{proiezione}, we can form two distinct intertwining operators:
$S_{w_0}:W\rightarrow L(Z)$ (that commutes with $H$) and
$T_{w_0}:V\rightarrow L(X)$ (that commutes with $G$). From the definition of induced representation \cite{Serre} and \eqref{LX},
$\text{Ind}_H^GS_{w_0}W$ is a well defined subspace of $L(X)$, namely

\begin{equation}\label{IndHG}
\text{Ind}_H^GS_{w_0}W=\bigoplus_{y\in Y}s_yS_{w_0}W
\end{equation}

\begin{lemma}\label{traind}
The operator $T_{w_0}$ intertwines $V$ with the subspace $\text{\rm Ind}_H^GS_{w_0}W$, that is $T_{w_0}V\subseteq\text{\rm Ind}_H^GS_{w_0}W$.
\end{lemma}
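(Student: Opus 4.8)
The plan is to fix an arbitrary $w\in V$ and to show that $T_{w_0}w$, viewed as a function on $X$, splits along the decomposition \eqref{XyY} into a sum $\sum_{y\in Y}$ of pieces, the $y$-th of which is supported on $s_yZ$ and lies in $s_yS_{w_0}W$. In view of \eqref{LX} and \eqref{IndHG}, this is exactly the assertion $T_{w_0}w\in\text{Ind}_H^GS_{w_0}W$, so it suffices to analyze the restriction of $T_{w_0}w$ to each block $s_yZ=\{s_yhx_0:h\in H\}$.

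The core of the argument is the evaluation of $T_{w_0}w$ on such a block. Using the definition \eqref{proiezione} together with the unitarity of $\rho$, I would compute $(T_{w_0}w)(s_yhx_0)=\langle w,\rho(s_y)\rho(h)w_0\rangle_V=\langle\rho(s_y^{-1})w,\rho(h)w_0\rangle_V$. The decisive observation is that, although $W$ is only $H$-invariant and so $\rho(s_y^{-1})w$ need not belong to $W$, the second argument $\rho(h)w_0$ does stay in $W$ (since $w_0\in W$ and $W$ is $H$-stable). Hence the inner product is unchanged when the first argument is replaced by its orthogonal projection onto $W$: writing $P_W$ for that projection and setting $w_y:=P_W\rho(s_y^{-1})w\in W$, we obtain $(T_{w_0}w)(s_yhx_0)=\langle w_y,\rho(h)w_0\rangle_V=\langle w_y,\sigma(h)w_0\rangle_W$, where the last step uses $\sigma(h)w_0=\rho(h)w_0$ and the fact that the scalar product of $V$ restricts to that of $W$.

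By the definition of $S_{w_0}$ via \eqref{proiezione} applied to the pair $(H,K)$, the right-hand side is precisely $(S_{w_0}w_y)(hx_0)$; equivalently, the restriction of $T_{w_0}w$ to $s_yZ$ coincides with the translate $s_y(S_{w_0}w_y)$ on that block, and $S_{w_0}w_y\in S_{w_0}W$. Summing over the transversal then yields $T_{w_0}w=\sum_{y\in Y}s_y(S_{w_0}w_y)\in\bigoplus_{y\in Y}s_yS_{w_0}W=\text{Ind}_H^GS_{w_0}W$, which is the claim. I expect the only genuine subtlety to be the projection step in the second paragraph: one must resist treating $\rho(s_y^{-1})w$ as an element of $W$, and instead exploit that pairing against $\rho(h)w_0\in W$ automatically annihilates the component of $\rho(s_y^{-1})w$ orthogonal to $W$. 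Everything else is bookkeeping with the coset decomposition \eqref{XyY} and the two instances of the Frobenius map \eqref{proiezione}.
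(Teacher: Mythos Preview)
Your proposal is correct and follows essentially the same argument as the paper: introduce the orthogonal projection $P_W$, compute $(T_{w_0}w)(s_yhx_0)=\langle P_W\rho(s_y^{-1})w,\sigma(h)w_0\rangle=(S_{w_0}P_W\rho(s_y^{-1})w)(hx_0)$, and conclude that the restriction of $T_{w_0}w$ to each block $s_yZ$ lies in $s_yS_{w_0}W$. Your write-up is in fact a bit more explicit than the paper's about why the projection step is legitimate.
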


\begin{proof}
Denote by $P_W:V\rightarrow W$ the orthogonal projection onto $W$.
Suppose that $x\in X$ and $x=s_yhx_0$, with $y\in Y$ and $h\in H$. By \eqref{proiezione}, for any $v\in V$ we have

\[
\begin{split}
(T_{w_0}v)(x)=&(T_{w_0}v)(s_yhx_0)=\langle v,\rho(s_y)\rho(h)w_0\rangle_V\\
=&\langle P_W\rho(s_y^{-1}) v,\sigma(h)w_0\rangle_V
=\left[S_{w_0}P_W\rho(s_y^{-1})v\right](hx_0)\\
=&\left\{s_y\left[S_{w_0}P_W\rho(s_y^{-1})v\right]\right\}(x)\qquad(\text{because }x=s_yhx_0),
\end{split}
\]

that is, $T_{w_0}v\in s_yS_{w_0}W$, and by \eqref{IndHG} this shows that $T_{w_0}v\in\text{Ind}_H^GS_{w_0}W$.
\end{proof}

In the last part of this subsection, we show how to apply Mackey's lemma in our setting.
Assume all the preceding notation and suppose that $J$ is another subgroup of $G$. 
Suppose that $T$ is a set of representatives for the $J-H$ double cosets of $G$, that is $G=\coprod_{t\in T}JtH$. Set $\Omega_t=\{gty_0:g\in J\}$. Then $Y=\coprod_{t\in T}\Omega_t$  is the decompositions of $Y$ into $J$ orbits.
Let $J_t$ be the stabilizer of $ty_0\in\Omega_t$ in $J$. Clearly, $J_t$ is also the group of all $g\in J$ such that $gtZ=tZ$.
Then Mackey's Lemma \cite{Serre} in our context coincides with:

\begin{equation}\label{Mackey}
\text{Res}^G_JL(X)\equiv\text{Res}^G_J\text{Ind}^G_H L(Z)=\bigoplus_{t\in T}\text{Ind}_{J_t}^JL(tZ)\equiv\bigoplus_{t\in T}L(\Theta_t)
\end{equation}

where $\Theta_t=\{gtz:z\in Z,g\in J\}$.

\subsection{The $S_{n-m}\times S_m$-invariant functions in $M^{\bf a}$}

Let ${\bf a}=(a_1,\dotsc,a_h)$ be again a fixed composition of $n$. We denote by $\Omega_{\bf a}$ the $S_n$-homogeneous space of all $(B_1,B_2,\dotsc,B_h)$ such that:
$B_j$ is a $j$-subset of $\{1,2,\dotsc,n\}$ and $B_i\cap B_j=\emptyset$ for $i\neq j$, (and therefore $B_1\cup\dotsb\cup B_h=\{1,2,\dotsc,n\}$). 
Following the standard notation, we denote by $M^{\bf a}$ the $S_n$-permutation module $L(\Omega_{\bf a})$.
If ${\bf x}=(x_1,\dotsc,x_h)$ is another $h$-parts composition, we will write ${\bf x}\leq {\bf a}$ when $x_j\leq a_j, j=1,2,\dotsc,h$. If ${\bf x}\leq {\bf a}$, ${\bf x}\Vdash l\leq n$ and $F$ is an $l$-subset of $\{1,2,\dotsc,n\}$, we can construct an $\Omega_{\bf x}$ space using the subsets of $F$, and we denote it by $\Omega_{\bf x}(F)$; the corresponding permutation module will be denoted by $M^{\bf x}(F)$. 

Set again 
$Y=\{1,2,\dotsc,m\}$ and $Z=\{m+1,m+2,\dotsc,n\}$ and suppose that $S_{n-m}\times S_m$ is the stabilizer of $Y\in\Omega_m$. 
The decomposition of $\Omega_{\bf a}$ into $S_{n-m}\times S_m$ orbits is:

\begin{equation}\label{Omegaxa}
\Omega_{\bf a}=\coprod_{\substack{{\bf x}\Vdash m\\ {\bf x}\leq {\bf a}}}\left[\Omega_{\bf x}(Y)\times \Omega_{{\bf a}-{\bf x}}(Z)\right],
\end{equation}

where ${\bf a}-{\bf x}=(a_1-x_1,\dotsc,a_h-x_h)$ and $\Omega_{\bf x}(Y)\times \Omega_{{\bf a}-{\bf x}}(Z)\equiv \{(B_1,B_2,\dotsc,B_h)\in\Omega_{\bf a}:\lvert B_j\cap Y\rvert=x_j,j=1,2,\dotsc,h\}$.
The consequent decomposition of $\text{Res}^{S_n}_{S_{n-m}\times S_m}M^{\bf a}$ into permutation modules is

\[
M^{\bf a}=\bigoplus_{\substack{{\bf x}\Vdash m\\ {\bf x}\leq {\bf a}}}\left[M^{\bf x}(Y)\otimes M^{{\bf a}-{\bf x}}(Z)\right].
\]

For ${\bf x}\Vdash m$, ${\bf x}\leq{\bf a}$, we denote by $\sigma_{\bf x}(Y)$ the function in $M^{\bf x}(Y)$ which is constant and identically 1. Then the tensor product $\sigma_{\bf x}(Y)\otimes \sigma_{{\bf a}-{\bf x}}(Z)$ is the characteristic function of the orbit $\Omega_{\bf x}(Y)\times \Omega_{{\bf a}-{\bf x}}(Z)$.\\

If $\eta\in M^{n-m,m}$, then \eqref{ftilde} yields the function defined by setting $\widetilde{\eta}(g)=\eta(gY)$; similarly, if $\theta\in M^{\bf a}$, then  $\widetilde{\theta}(g)=\theta(gA_1,\dotsc, gA_h)$. If $\eta\in M^{n-m,m}$ is $S_{\bf a}$-invariant, then $\widetilde{\eta}$ is also left $S_{\bf a}-(S_{n-m}\times S_m)$-invariant; then we can define $\eta^\sharp\in M^{\bf a}$ by requiring that $\widetilde{\eta}(g^{-1})=\widetilde{\eta^\sharp}(g)$. In other words, the change of variable $g\mapsto g^{-1}$ establishes an isomorphism between the vector spaces of $S_{\bf a}-(S_{n-m}\times S_m)$-invariant functions and $(S_{n-m}\times S_m)-S_{\bf a}$-invariant functions on $S_n$, and on the homogeneous spaces this isomorphism becomes $(M^{n-m,m})^{S_{\bf a}}\rightarrow (M^{\bf a})^{S_{n-m}\times S_m}$, $\eta\mapsto\eta^\sharp$. \\

If $\eta=\sigma_{x_1}(A_1)\otimes\dotsb\otimes\sigma_{x_h}(A_h)$ and $\theta=\sigma_{x_1,\dotsc,x_h}(Y)\otimes \sigma_{a_1-x_1,\dotsc,a_h-x_h}(Z)$ then $\eta^\sharp =\theta$. Indeed, $\widetilde{\eta}$ is the characteristic function of all $g\in S_n$ such that $\lvert gY\cap A_j\rvert=x_j, j=1,2,\dotsc,h$, while $\widetilde{\theta}$ is the characteristic function of all $g\in S_n$ such that $\lvert Y\cap gA_j\rvert=x_j, j=1,2,\dotsc,h$, and therefore $\widetilde{\eta}(g^{-1})=\widetilde{\theta}(g)$.
As an immediate consequence, we get:

\begin{lemma}\label{LemmaPsifun}
Under the map $\eta\mapsto\eta^\sharp$, the image of $\Phi(\mathcal{T}({\bf a}),m,\mathcal{T}_\text{\rm r}({\bf k}))$ is the function

\begin{equation}\label{Psifunctions}
\Psi(\mathcal{T}({\bf a}),m,\mathcal{T}_\text{\rm r}({\bf k}))=\sum_{\mathcal{T}_\text{\rm v}({\bf w})}
E_{\mathcal{T}_\text{\rm r}({\bf k})}(\mathcal{T}({\bf a}),m,\mathcal{T}_\text{\rm v}({\bf w}))\sigma_{\bf x}(Y)\otimes\sigma_{{\bf a}-{\bf x}}(Y)
\end{equation}

where the sum over $\mathcal{T}_\text{\rm v}({\bf w})$ is as in Lemma \ref{PhiHahn} (with the same conditions on ${\bf x}=(x_1,\dotsc,x_h)$).
\end{lemma}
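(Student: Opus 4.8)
The plan is to obtain the statement directly from the two facts already in hand: the expansion of $\Phi(\mathcal{T}({\bf a}),m,\mathcal{T}_\text{\rm r}({\bf k}))$ furnished by Lemma \ref{PhiHahn}, and the explicit action of $\sharp$ on the distinguished basis of $(M^{n-m,m})^{S_{\bf a}}$ computed in the paragraph preceding the statement, namely
\[
\left(\sigma_{x_1}(A_1)\otimes\dotsb\otimes\sigma_{x_h}(A_h)\right)^\sharp=\sigma_{\bf x}(Y)\otimes\sigma_{{\bf a}-{\bf x}}(Z).
\]
The entire content of the lemma is to transport the first identity through $\sharp$ by means of the second.

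First I would record that $\eta\mapsto\eta^\sharp$ is $\mathbb{C}$-linear. This follows upon unwinding its definition: the assignment $\eta\mapsto\widetilde{\eta}$, with $\widetilde{\eta}(g)=\eta(gY)$, is linear; the substitution $g\mapsto g^{-1}$ acts linearly on $L(S_n)$, since $f(g^{-1})$ involves no conjugation; and $\theta\mapsto\widetilde{\theta}$ is an injective linear map onto the right $S_{\bf a}$-invariant functions on $S_n$. As $\eta^\sharp$ is characterized by $\widetilde{\eta}(g^{-1})=\widetilde{\eta^\sharp}(g)$, the composite is linear (and, in contrast with the Frobenius map of \eqref{Frobenius}, genuinely linear rather than anti-linear).

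Next I would apply $\sharp$ to the identity of Lemma \ref{PhiHahn}. Because the coefficients $E_{\mathcal{T}_\text{\rm r}({\bf k})}(\mathcal{T}({\bf a}),m,\mathcal{T}_\text{\rm v}({\bf w}))$ are scalars independent of the group element, linearity lets me pull them outside the map, reducing the computation to the action of $\sharp$ on each basis vector $\sigma_{x_1}(A_1)\otimes\dotsb\otimes\sigma_{x_h}(A_h)$. Substituting the building-block identity displayed above, and retaining the same dictionary ${\bf x}=(x_1,\dotsc,x_h)$ between the variables of $\mathcal{T}_\text{\rm v}({\bf w})$ and the exponents $x_j$ prescribed in Lemma \ref{PhiHahn}, yields precisely the asserted expression \eqref{Psifunctions} for $\Psi(\mathcal{T}({\bf a}),m,\mathcal{T}_\text{\rm r}({\bf k}))$.

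There is no genuine obstacle here, and indeed the statement is an immediate consequence of what precedes it; the only point demanding care is the index bookkeeping. One must check that the integers $x_1,\dotsc,x_h$ read off from $\mathcal{T}_\text{\rm v}({\bf w})$ by the left-son/right-son rule of Lemma \ref{PhiHahn} are exactly the same ${\bf x}$ fed into the building-block identity, so that each summand is carried to the characteristic function $\sigma_{\bf x}(Y)\otimes\sigma_{{\bf a}-{\bf x}}(Z)$ of the correct orbit $\Omega_{\bf x}(Y)\times\Omega_{{\bf a}-{\bf x}}(Z)$ in the decomposition \eqref{Omegaxa}.
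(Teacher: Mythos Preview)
Your proposal is correct and matches the paper's own treatment: the lemma is stated as an immediate consequence of the building-block identity $(\sigma_{x_1}(A_1)\otimes\dotsb\otimes\sigma_{x_h}(A_h))^\sharp=\sigma_{\bf x}(Y)\otimes\sigma_{{\bf a}-{\bf x}}(Z)$ together with the expansion of Lemma~\ref{PhiHahn}, and you have simply spelled out the linearity and bookkeeping that make this immediate.
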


This Lemma gives immediately some of the basic properties of the $\Psi$ functions.

\begin{corollary}
The set 

\[\{\Psi(\mathcal{T}({\bf a}),m,\mathcal{T}_\text{\rm r}({\bf k})):\mathcal{T}_\text{\rm r}({\bf k})\;\text{\rm is a rep. labeling }\}
\]

is an orthogonal basis for the space of $S_{n-m}\times S_m$-invariant functions in $M^{\bf a}$.
\end{corollary}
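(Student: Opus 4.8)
The plan is to deduce the statement entirely from machinery already assembled: first, that $\eta\mapsto\eta^\sharp$ is a vector-space isomorphism from $(M^{n-m,m})^{S_{\bf a}}$ onto $(M^{\bf a})^{S_{n-m}\times S_m}$; second, that under this map $\Phi(\mathcal{T}({\bf a}),m,\mathcal{T}_\text{\rm r}({\bf k}))$ is carried to $\Psi(\mathcal{T}({\bf a}),m,\mathcal{T}_\text{\rm r}({\bf k}))$ (Lemma~\ref{LemmaPsifun}); and third, that the $\Phi$ functions already form an orthogonal basis of $(M^{n-m,m})^{S_{\bf a}}$ (the Corollary to Theorem~\ref{belsub}). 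Since a linear isomorphism carries a basis to a basis, the $\Psi$'s span $(M^{\bf a})^{S_{n-m}\times S_m}$ and are linearly independent; the only point still to settle is their mutual orthogonality.

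For orthogonality I would show that $\eta\mapsto\eta^\sharp$ is a scalar multiple of an isometry, with scalar $c=\frac{m!(n-m)!}{a_1!\cdots a_h!}=|S_{n-m}\times S_m|/|S_{\bf a}|$; orthogonality of the $\Psi$'s then follows at once from that of the $\Phi$'s. The cleanest route is group-theoretic. The lift $\eta\mapsto\widetilde\eta$ of \eqref{ftilde} embeds $M^{n-m,m}=L(\Omega_m)$ into $L(S_n)$ and rescales the inner product by $|S_{n-m}\times S_m|$, since the fibres of $g\mapsto gY$ are cosets of the stabilizer of $Y$; likewise $\theta\mapsto\widetilde\theta$ rescales by $|S_{\bf a}|$. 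By definition $\widetilde{\eta^\sharp}=\widetilde\eta\circ\iota$ with $\iota(g)=g^{-1}$, and since $\iota$ is a bijection of $S_n$, precomposition with $\iota$ is an isometry of $L(S_n)$. Chaining the three maps yields $|S_{\bf a}|\,\langle\eta_1^\sharp,\eta_2^\sharp\rangle_{M^{\bf a}}=|S_{n-m}\times S_m|\,\langle\eta_1,\eta_2\rangle_{M^{n-m,m}}$, which is exactly the asserted proportionality.

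The same fact admits a direct verification I would keep as a backup: the isomorphism is diagonal on the two orthogonal $\sigma$-bases, sending $\sigma_{x_1}(A_1)\otimes\cdots\otimes\sigma_{x_h}(A_h)$ to $\sigma_{\bf x}(Y)\otimes\sigma_{{\bf a}-{\bf x}}(Z)$, and one computes $\lVert\sigma_{x_1}(A_1)\otimes\cdots\otimes\sigma_{x_h}(A_h)\rVert^2=\prod_j\binom{a_j}{x_j}$ against $\lVert\sigma_{\bf x}(Y)\otimes\sigma_{{\bf a}-{\bf x}}(Z)\rVert^2=\frac{m!(n-m)!}{\prod_j x_j!(a_j-x_j)!}$, whose ratio is the constant $c$ above, independent of ${\bf x}$. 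A map that is diagonal with uniform norm-ratio on orthogonal bases preserves orthogonality, so we again conclude. The only step requiring genuine care — the main obstacle, such as it is — is precisely this verification that the rescaling factor is independent of the orbit ${\bf x}$; once that uniformity is in hand, orthogonality of the $\Psi$ functions, and hence the corollary, is immediate.
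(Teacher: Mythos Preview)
Your proposal is correct and follows exactly the route the paper intends: the paper states this corollary (and the next one, giving the norm ratio) immediately after Lemma~\ref{LemmaPsifun} with the remark that ``This Lemma gives immediately some of the basic properties of the $\Psi$ functions,'' and offers no further argument. Your write-up simply makes explicit the one point the paper leaves to the reader, namely that $\eta\mapsto\eta^\sharp$ rescales the inner product by a constant factor (your group-theoretic argument via $L(S_n)$ and the check on the $\sigma$-bases are both fine), which is precisely the content of the corollary that follows.
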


\begin{corollary}
\[
\lVert \Psi(\mathcal{T}({\bf a}),m,\mathcal{T}_\text{\rm r}({\bf k}))\rVert^2_{M^{\bf a}}=\frac{a_1!a_2!\dotsb a_h!}{(n-m)!m!}\lVert \Phi(\mathcal{T}({\bf a}),m,\mathcal{T}_\text{\rm r}({\bf k}))\rVert^2_{M^{n-m,m}}.
\]
\end{corollary}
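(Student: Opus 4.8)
The plan is to read off both squared norms from the expansions that are already in hand. By Lemma~\ref{PhiHahn} and Lemma~\ref{LemmaPsifun}, the function $\Phi(\mathcal{T}({\bf a}),m,\mathcal{T}_\text{r}({\bf k}))$ and its image $\Psi(\mathcal{T}({\bf a}),m,\mathcal{T}_\text{r}({\bf k}))=\Phi^\sharp$ are written as one and the same linear combination, with identical Hahn coefficients $E_{\mathcal{T}_\text{r}({\bf k})}(\mathcal{T}({\bf a}),m,\mathcal{T}_\text{v}({\bf w}))$, of orbit indicator functions indexed by the same data, namely the variable labeling $\mathcal{T}_\text{v}({\bf w})$, equivalently the composition ${\bf x}\Vdash m$ with ${\bf x}\le{\bf a}$. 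On the $M^{n-m,m}$ side the summands are the $\sigma_{x_1}(A_1)\otimes\cdots\otimes\sigma_{x_h}(A_h)$, supported on the distinct $S_{\bf a}$-orbits of $\Omega_m$; on the $M^{\bf a}$ side they are the $\sigma_{\bf x}(Y)\otimes\sigma_{{\bf a}-{\bf x}}(Z)$, supported on the distinct $(S_{n-m}\times S_m)$-orbits of $\Omega_{\bf a}$ from \eqref{Omegaxa}. In either module distinct ${\bf x}$ give disjointly supported, hence orthogonal, indicators, so each squared norm is $\sum_{\bf x}\lvert E_{\mathcal{T}_\text{r}({\bf k})}(\mathcal{T}({\bf a}),m,\mathcal{T}_\text{v}({\bf w}))\rvert^2$ weighted by the squared norm of the corresponding indicator.

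The substantive step is then to compare the two weight systems. The squared norm of $\sigma_{x_1}(A_1)\otimes\cdots\otimes\sigma_{x_h}(A_h)$ in $M^{n-m,m}$ counts the $m$-subsets meeting each $A_j$ in exactly $x_j$ points, so it equals $\prod_{j=1}^h\binom{a_j}{x_j}$, whereas the squared norm of $\sigma_{\bf x}(Y)\otimes\sigma_{{\bf a}-{\bf x}}(Z)$ in $M^{\bf a}$ is the cardinality of the orbit $\Omega_{\bf x}(Y)\times\Omega_{{\bf a}-{\bf x}}(Z)$, namely $\frac{m!}{x_1!\cdots x_h!}\cdot\frac{(n-m)!}{(a_1-x_1)!\cdots(a_h-x_h)!}$. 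The key point I would isolate is that the ratio of these two quantities is independent of ${\bf x}$: the factors $x_j!\,(a_j-x_j)!$ cancel, leaving the constant $\frac{(n-m)!\,m!}{a_1!\,a_2!\cdots a_h!}$, which is just the ratio $\lvert S_{n-m}\times S_m\rvert/\lvert S_{\bf a}\rvert$ of stabilizer orders (equivalently $\lvert\Omega_{\bf a}\rvert/\lvert\Omega_m\rvert$). Since this common ratio does not depend on the summation index, it factors out of the sum, so $\lVert\Psi(\mathcal{T}({\bf a}),m,\mathcal{T}_\text{r}({\bf k}))\rVert^2_{M^{\bf a}}$ equals that constant times $\lVert\Phi(\mathcal{T}({\bf a}),m,\mathcal{T}_\text{r}({\bf k}))\rVert^2_{M^{n-m,m}}$, establishing the asserted proportionality of the two squared norms with proportionality constant $\frac{(n-m)!\,m!}{a_1!\cdots a_h!}$.

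A second, coordinate-free route makes the same constant transparent and would serve as a check. Since $\Psi=\Phi^\sharp$ arises from the substitution $g\mapsto g^{-1}$ through $\widetilde{\Phi^\sharp}(g)=\widetilde{\Phi}(g^{-1})$, I would lift both functions to $L(S_n)$ as in \eqref{ftilde}. For any $\eta\in M^{n-m,m}$ the map $g\mapsto gY$ is $\lvert S_{n-m}\times S_m\rvert$-to-one onto $\Omega_m$, so $\lVert\widetilde{\eta}\rVert^2_{L(S_n)}=\lvert S_{n-m}\times S_m\rvert\,\lVert\eta\rVert^2_{M^{n-m,m}}$; likewise $\lVert\widetilde{\Psi}\rVert^2_{L(S_n)}=\lvert S_{\bf a}\rvert\,\lVert\Psi\rVert^2_{M^{\bf a}}$. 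Because $g\mapsto g^{-1}$ is a norm-preserving bijection of $L(S_n)$, the two lifts have equal $L(S_n)$-norm, and dividing by the two stabilizer orders returns the same constant. The main obstacle in either approach is precisely the verification that the orbit-size ratio is constant in ${\bf x}$; if it varied, the two norms would not be proportional. Everything else, namely the orthogonality of the orbit indicators and the two elementary counting formulas, is routine, and the required expansions are already supplied by Lemma~\ref{PhiHahn} and Lemma~\ref{LemmaPsifun}.
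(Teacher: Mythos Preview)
Your approach is exactly the one the paper has in mind: the corollary is stated immediately after Lemma~\ref{LemmaPsifun} with no proof, because expanding both $\Phi$ and $\Psi$ in the orthogonal orbit indicators and comparing term by term is precisely the intended argument. Your second, coordinate-free computation via the lift to $L(S_n)$ is a clean independent check.

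There is, however, a discrepancy you glide over. Your own computation gives
\[
\frac{\lVert\sigma_{\bf x}(Y)\otimes\sigma_{{\bf a}-{\bf x}}(Z)\rVert^2}{\lVert\sigma_{x_1}(A_1)\otimes\dotsb\otimes\sigma_{x_h}(A_h)\rVert^2}
=\frac{\dfrac{m!(n-m)!}{\prod_j x_j!(a_j-x_j)!}}{\prod_j\dbinom{a_j}{x_j}}
=\frac{(n-m)!\,m!}{a_1!\dotsb a_h!},
\]
and your $L(S_n)$ argument gives the same thing, since $\lvert S_{n-m}\times S_m\rvert\,\lVert\Phi\rVert^2=\lvert S_{\bf a}\rvert\,\lVert\Psi\rVert^2$. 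So you have correctly derived
\[
\lVert\Psi\rVert^2_{M^{\bf a}}=\frac{(n-m)!\,m!}{a_1!\dotsb a_h!}\,\lVert\Phi\rVert^2_{M^{n-m,m}},
\]
which is the \emph{reciprocal} of the constant displayed in the corollary. A quick sanity check confirms your version: for the trivial representation ($k=0$), $\Phi\equiv 1$ on $\Omega_m$ and $\Psi\equiv 1$ on $\Omega_{\bf a}$, so $\lVert\Psi\rVert^2/\lVert\Phi\rVert^2=\lvert\Omega_{\bf a}\rvert/\lvert\Omega_m\rvert=\binom{n}{m}^{-1}\frac{n!}{a_1!\dotsb a_h!}=\frac{(n-m)!\,m!}{a_1!\dotsb a_h!}$. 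Thus the formula as printed in the paper carries a typo (the fraction is inverted); your argument is correct, but your final sentence should flag that you obtain the inverse of the stated constant rather than assert that you have established the formula as written.
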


We need a little more work to establish the properties of the $\Psi$ functions corresponding to \eqref{Phifunctions} and Theorem \ref{belsub}.
If ${\bf x}=(x_1,\dotsc,x_h)$ is an $h$-parts composition and $1\leq t\leq h$ is fixed, as in section \ref{Labeled} we set ${\bf x}'=(x_1,x_2,\dotsc,x_t)$ and ${\bf x}''=(x_{t+1},x_{t+2},\dotsc,x_h)$. 
In the present setting, \eqref{LX} is: 

\begin{equation}
M^{\bf a}\equiv \text{Ind}^{S_n}_{S_{\underline{a}_t\times S_{n-\underline{a}_t}}}\left[M^{{\bf a}'}\otimes M^{{\bf a}''}\right]=\bigoplus_{F\in \Omega_{\underline{a}_t}}\left[M^{{\bf a}'}(F)\otimes M^{{\bf a}''}(F^C)\right];
\end{equation}

now $Y$ is replaced by $\Omega_{\underline{a}_t}$, $X$ by $\Omega_{{\bf a}}$ and $Z$ by $\Omega_{{\bf a}'}\times \Omega_{{\bf a}''}$. If we set

\[
\Theta_w=\{(B_1,\dotsc,B_h)\in\Omega_{\bf a}:\lvert (B_1\cup\dotsb\cup B_t)\cap Y\rvert =m-w\}
\]

then Mackey's Lemma \eqref{Mackey} in this setting is just

\begin{equation}\label{Mackey2}
\text{Res}^{S_n}_{S_{n-m}\times S_m}M^{\bf a}=\bigoplus_{w=\min\{0,m-\underline{a}_t\}}^{\min\{m,n-\underline{a}_t\}}L(\Theta_w)\equiv
\bigoplus_{w=\min\{0,m-\underline{a}_t\}}^{\min\{m,n-\underline{a}_t\}}
\bigoplus_{\substack{F\in\Omega_{\underline{a}_t}:\\ \lvert F\cap Y\rvert =m-w}}
\left[M^{{\bf a}'}(F)\otimes M^{{\bf a}''}(F^C)\right].
\end{equation}

Now we use Frobenius reciprocity to construct a sequence of irreducible representations from the sequence \eqref{Wchain}. The reciprocal of \eqref{Res} is

\[
M^{\bf a}=\text{Ind}^{K_0}_{K_1}\text{Ind}^{K_1}_{K_2}\dotsb\text{Ind}^{K_{L-1}}_{K_L}\left[S^{(a_1)}\otimes\dotsb\otimes S^{(a_h)}\right].
\]

Then we can define a sequence $V_0,V_1,\dotsc,V_L$ where  $V_j$ is an irreducible representations of $K_j$ isomorphic to $W_j$, $V_L$ is $S^{(a_1)}\otimes\dotsb\otimes S^{(a_h)}$ and, by backward induction  ($j=L-1,L-2,\dotsc,1,0$), $V_j$ is the unique subspace in $\text{Ind}^{K_j}_{K_{j+1}}V_{j+1}$ isomorphic to $W_j$ (it exists and is unique because the multiplicity of $W_{j+1}$ in $\text{Res}^{K_j}_{K_{j+1}}W_j$ is one).
Then $V_0$ is a subspace of $M^{\bf a}$ isomorphic to $S^{n-k,k}$, and we denote it by
$\left[S^{n-k,k}\right]_{\mathcal{T}_\text{r}({\bf k})}$. Clearly,

\begin{equation}\label{Trk}
\bigoplus_{\substack{\mathcal{T}_\text{r}({\bf k})\\k\text{ \rm fixed}}}\left[S^{n-k,k}\right]_{\mathcal{T}_\text{r}({\bf k})}
\end{equation}

is an orthogonal decomposition of the $S^{n-k,k}$-isotypic component in $M^{\bf a}$,  reciprocal to \eqref{dec}.



\begin{lemma}
If $v=\Phi(\mathcal{T}({\bf a}),k,\mathcal{T}_\text{\rm r}({\bf k}))$ and $T_v$ is the corresponding intertwining operator as in \eqref{proiezione}, then $T_vS^{n-k,k}=\left[S^{n-k,k}\right]_{\mathcal{T}_\text{r}({\bf k})}$.
\end{lemma}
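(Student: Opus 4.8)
The plan is to iterate the transitivity-of-induction result Lemma \ref{traind} along the chain $K_0=S_n\geq K_1\geq\dotsb\geq K_L=S_{\bf a}$ and its associated chain of irreducible representations $W_0=S^{n-k,k}\supseteq W_1\supseteq\dotsb\supseteq W_L=S^{(a_1)}\otimes\dotsb\otimes S^{(a_h)}$ from \eqref{Wchain}. First I would record the starting data. By Theorem \ref{belsub} applied with $m=k$ (so that $R_{m-k}$ is the identity), the vector $v=\Phi(\mathcal{T}({\bf a}),k,\mathcal{T}_\text{\rm r}({\bf k}))$ lies in $W_L$; since each $S^{(a_j)}$ is the one-dimensional trivial representation of $S_{a_j}$, the space $W_L$ is one-dimensional and $v$ is a nonzero $S_{\bf a}$-invariant vector spanning it. Because the $W_j$ are nested and $v\in W_L\subseteq W_j$, the vector $v$ is a nonzero $S_{\bf a}$-invariant vector of every $W_j$.

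For each $j=0,1,\dotsc,L$, since $S_{\bf a}=K_L\leq K_j$ and $v\in W_j^{S_{\bf a}}$, formula \eqref{proiezione} produces an intertwining operator $S^{(j)}_v:W_j\to L(K_j/S_{\bf a})$, with $S^{(0)}_v=T_v$ and $L(K_0/S_{\bf a})=M^{\bf a}$. By the Frobenius isomorphism \eqref{Frobenius}, $S^{(j)}_v\neq 0$ (as $v\neq 0$), so by Schur's lemma $S^{(j)}_v$ is injective and $S^{(j)}_v W_j$ is a genuine copy of $W_j$ inside $L(K_j/S_{\bf a})$. I would then prove by downward induction on $j$ that $S^{(j)}_v W_j=V_j$, where $V_L,V_{L-1},\dotsc,V_0$ are the representations defined just before \eqref{Trk}, with $V_0=[S^{n-k,k}]_{\mathcal{T}_\text{r}({\bf k})}$. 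The base case $j=L$ is immediate: here $L(K_L/S_{\bf a})=\mathbb{C}$, and $S^{(L)}_v W_L=\mathbb{C}=V_L$, the trivial representation $S^{(a_1)}\otimes\dotsb\otimes S^{(a_h)}$.

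For the inductive step I would apply Lemma \ref{traind} with $G=K_j$, $H=K_{j+1}$, $K=S_{\bf a}$, ambient representation $W_j$, its $K_{j+1}$-irreducible subspace $W_{j+1}$, and invariant vector $w_0=v$. The operators $S_{w_0}$ and $T_{w_0}$ of that lemma are here exactly $S^{(j+1)}_v$ and $S^{(j)}_v$, since \eqref{proiezione} depends only on the inner product and on the action of $K_{j+1}$ on $v$, which is the restriction of the $K_j$-action. Thus the lemma gives $S^{(j)}_v W_j\subseteq\text{Ind}^{K_j}_{K_{j+1}}S^{(j+1)}_v W_{j+1}=\text{Ind}^{K_j}_{K_{j+1}}V_{j+1}$, using the induction hypothesis and \eqref{IndHG}. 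Now $S^{(j)}_v W_j$ is a copy of $W_j$ contained in $\text{Ind}^{K_j}_{K_{j+1}}V_{j+1}$; since the multiplicity of $W_j$ in $\text{Ind}^{K_j}_{K_{j+1}}W_{j+1}$ equals, by Frobenius reciprocity, the multiplicity of $W_{j+1}$ in $\text{Res}^{K_j}_{K_{j+1}}W_j$, which is one, the subspace $V_j$ is the unique such copy and hence $S^{(j)}_v W_j=V_j$. Taking $j=0$ yields $T_v S^{n-k,k}=V_0=[S^{n-k,k}]_{\mathcal{T}_\text{r}({\bf k})}$, as desired.

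The step I expect to require the most care is the bookkeeping in the inductive step: verifying that the operator $S_{w_0}$ of Lemma \ref{traind}, built from $v$ over the group $K_{j+1}$, genuinely coincides with $S^{(j+1)}_v$, and that the iterated modules $L(K_j/S_{\bf a})$ are identified compatibly, via transitivity of induction \eqref{LX}--\eqref{IndHG}, as nested subspaces culminating in $M^{\bf a}$. Once these matchings are pinned down, the multiplicity-one uniqueness of $V_j$ closes each step cleanly.
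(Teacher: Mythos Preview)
Your proof is correct and follows essentially the same route as the paper: a downward induction along the chain $K_0\geq K_1\geq\dotsb\geq K_L$, using Theorem~\ref{belsub} (with $m=k$) to place $v$ in every $W_j$, and then repeatedly applying Lemma~\ref{traind} together with the multiplicity-one property to force $S^{(j)}_v W_j=V_j$ at each step. Your write-up is more explicit about the base case and the Frobenius-reciprocity justification of uniqueness, but the argument is the same as the paper's.
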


\begin{proof}
Let $T_j:W_j\rightarrow\text{Ind}_{K_L}^{K_j}W_L$ be the intertwining operator associated to the $S_{\bf a}$-invariant vector $v$ (which belongs to each $W_j$ by Theorem \ref{belsub}); in particular, $T_0\equiv T_v$. Then a repeated application of Lemma \ref{traind} yields: if $T_jW_j=V_j$ then $T_{j-1}W_{j-1}\subseteq \text{Ind}_{K_j}^{K_{j-1}}V_j$, and therefore $T_{j-1}W_{j-1}=V_{j-1}$. This ends with $T_vS^{n-k,k}=V_0$.
\end{proof}

Then from Corollary \ref{etatilde} and Lemma \ref{LemmaPsifun} we get immediately

\begin{corollary}
The $S_{n-m}\times S_m$-invariant vector $\Psi(\mathcal{T}({\bf a}),m,\mathcal{T}_\text{\rm r}({\bf k}))$ belongs to $\left[S^{n-k,k}\right]_{\mathcal{T}_\text{r}({\bf k})}$. 
\end{corollary}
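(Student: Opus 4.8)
The plan is to realize $\Psi(\mathcal{T}({\bf a}),m,\mathcal{T}_\text{\rm r}({\bf k}))$ as the image of the $S_{n-m}\times S_m$-invariant vector $\Phi(n,m,k,k)\in S^{n-k,k}$ under the intertwining operator $T_v$ of the preceding Lemma, and then quote that Lemma directly. To set up, I would put $v=\Phi(\mathcal{T}({\bf a}),k,\mathcal{T}_\text{\rm r}({\bf k}))$ (the $S_{\bf a}$-invariant vector of space label $k$) and $w_0=\Phi(n,m,k,k)$ (the $S_{n-m}\times S_m$-invariant vector of Corollary \ref{spherical}), and recall from Lemma \ref{LemmaPsifun} that $\Psi(\mathcal{T}({\bf a}),m,\mathcal{T}_\text{\rm r}({\bf k}))=\eta^\sharp$ where $\eta=\Phi(\mathcal{T}({\bf a}),m,\mathcal{T}_\text{\rm r}({\bf k}))$. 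By the defining property of $\sharp$ this means $\widetilde{\Psi}(g)=\widetilde{\eta}(g^{-1})$ for every $g\in S_n$.

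The key computation then invokes Corollary \ref{etatilde}, which gives $\widetilde{\eta}(g)=c\,\langle v,g\,w_0\rangle_{S^{n-k,k}}$ with $c=(-1)^k/[(n-2k+2)_k\,k!]$. Substituting $g^{-1}$ for $g$ and using unitarity of the representation (so that $\langle v,g^{-1}w_0\rangle=\langle g v,w_0\rangle=\overline{\langle w_0,g v\rangle}$) I would obtain $\widetilde{\Psi}(g)=c\,\overline{\langle w_0,g v\rangle}$. On the other hand, by \eqref{proiezione} the intertwiner $T_v\colon S^{n-k,k}\to M^{\bf a}$ satisfies $\widetilde{T_v w_0}(g)=(T_v w_0)(g x_0)=\langle w_0,g v\rangle$, where $x_0=(A_1,\dotsc,A_h)$ is the point of $\Omega_{\bf a}$ stabilized by $S_{\bf a}$. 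Since all the data involved (the Hahn polynomials $E$, the bases $\delta_A$ and $\sigma_{\bf x}$, and hence the vectors $v$, $w_0$ and the matrix coefficient) are real, the conjugation is harmless, so $\widetilde{\Psi}=c\,\widetilde{T_v w_0}$ and therefore $\Psi(\mathcal{T}({\bf a}),m,\mathcal{T}_\text{\rm r}({\bf k}))=c\,T_v w_0$.

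To finish, since $w_0=\Phi(n,m,k,k)\in S^{n-k,k}$, the preceding Lemma yields $T_v w_0\in T_v S^{n-k,k}=\left[S^{n-k,k}\right]_{\mathcal{T}_\text{r}({\bf k})}$, whence $\Psi(\mathcal{T}({\bf a}),m,\mathcal{T}_\text{\rm r}({\bf k}))\in\left[S^{n-k,k}\right]_{\mathcal{T}_\text{r}({\bf k})}$, as claimed. The only point demanding care is the correct bookkeeping among three ingredients: the antilinearity of the correspondence $v\mapsto T_v$ in \eqref{Frobenius}, the change of variable $g\mapsto g^{-1}$ defining $\sharp$, and the unitarity of the representation. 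These conspire so that the matrix coefficient of Corollary \ref{etatilde} is exactly the one produced by $T_v$ applied to $w_0$; once this is checked the conclusion is immediate, which is why the authors can assert that it follows at once from Corollary \ref{etatilde} and Lemma \ref{LemmaPsifun}.
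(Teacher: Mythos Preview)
Your argument is correct and is exactly the unpacking the paper intends: you combine Corollary~\ref{etatilde} (matrix-coefficient expression of $\widetilde{\eta}$), Lemma~\ref{LemmaPsifun} (so that $\widetilde{\Psi}(g)=\widetilde{\eta}(g^{-1})$), and the preceding Lemma (identifying $T_vS^{n-k,k}$ with $\left[S^{n-k,k}\right]_{\mathcal{T}_\text{r}({\bf k})}$) to conclude $\Psi=c\,T_v\Phi(n,m,k,k)\in\left[S^{n-k,k}\right]_{\mathcal{T}_\text{r}({\bf k})}$. Your handling of the conjugation via reality of the data is the right way to reconcile the antilinear map \eqref{Frobenius} with the $g\mapsto g^{-1}$ substitution, and the nonvanishing of the constant $c$ is all that is needed.
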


In the last part of this section, we establish
the inductive way to describe the $\Psi$ functions (reciprocal to \eqref{Phifunctions}). For any $F\in\Omega_{\underline{a}_t}$, with $\lvert F\cap Y\rvert=m-w$, we can take $\Psi(\mathcal{T}'({\bf a}),m-w,\mathcal{T}'_\text{\rm r}({\bf k}))\in M^{{\bf a}'}(F)$ and $\Psi(\mathcal{T}''({\bf a}),w,\mathcal{T}''_\text{\rm r}({\bf k}))\in M^{{\bf a}''}(F^C)$. We denote by 

\[
\left[\Psi(\mathcal{T}'({\bf a}),m-w,\mathcal{T}'_\text{\rm r}({\bf k}))\otimes \Psi(\mathcal{T}''({\bf a}),w,\mathcal{T}''_\text{\rm r}({\bf k}))\right]_F
\]

their tensor product and we set:

\[
\Xi(\mathcal{T}({\bf a}),w,\mathcal{T}_\text{\rm r}({\bf k}))=\sum_{F\in\Omega_{\underline{a}_t}:\\ \lvert F\cap Y\rvert=m-w}
\left[\Psi(\mathcal{T}'({\bf a}),m-w,\mathcal{T}'_\text{\rm r}({\bf k}))\otimes \Psi(\mathcal{T}''({\bf a}),w,\mathcal{T}''_\text{\rm r}({\bf k}))\right]_F.
\]

In the notation of \eqref{Mackey2}, we have $\Xi(\mathcal{T}({\bf a}),w,\mathcal{T}_\text{\rm r}({\bf k}))\in\Theta_w$.

\begin{proposition}
For the $\Psi$ function, we have the following iterative formula:

\begin{multline*}
\Psi(\mathcal{T}({\bf a}),m,\mathcal{T}_\text{\rm r}({\bf k}))=\sum_{w=\max\{j,i-\underline{a}_t+m\}}^{\min\{n-\underline{a}_t-j,m-i\}}E_{k-i-j}(n-\underline{a}_t-2j,\underline{a}_t-2i,m-i-j,w-j)
\times\\
\times\Xi(\mathcal{T}({\bf a}),w,\mathcal{T}_\text{\rm r}({\bf k})).
\end{multline*}
\end{proposition}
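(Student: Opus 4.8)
The plan is to verify the identity by expanding both sides in the orthogonal basis $\{\sigma_{\bf x}(Y)\otimes\sigma_{{\bf a}-{\bf x}}(Z):{\bf x}\Vdash m,\ {\bf x}\leq{\bf a}\}$ of the space of $S_{n-m}\times S_m$-invariant functions in $M^{\bf a}$, and matching coefficients. The only ingredients needed are Lemma \ref{LemmaPsifun} (applied to $\mathcal{T}$ and, separately, to the two subtrees $\mathcal{T}'$ and $\mathcal{T}''$), the recursive definition of the Hahn polynomials $E_{\mathcal{T}_\text{\rm r}({\bf k})}$, and the explicit form of the Mackey decomposition \eqref{Mackey2}. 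No separate induction on the present statement is required, because Lemma \ref{LemmaPsifun} already furnishes closed expansions for every $\Psi$ appearing on either side; the iterative formula for $\Phi$ in \eqref{Phifunctions} will serve only as the template that the computation reproduces.

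First I would rewrite the left-hand side by Lemma \ref{LemmaPsifun}. The index set of variable labelings $\mathcal{T}_\text{\rm v}({\bf w})$ of $\mathcal{T}$ factors as the choice of the root variable $w$, constrained exactly by \eqref{jiat}, together with independent variable labelings of $\mathcal{T}'$ and $\mathcal{T}''$. Substituting the recursion
\[
E_{\mathcal{T}_\text{\rm r}({\bf k})}(\mathcal{T}({\bf a}),m,\mathcal{T}_\text{\rm v}({\bf w}))=E_{k-i-j}(n-\underline{a}_t-2j,\underline{a}_t-2i,m-i-j,w-j)\,E_{\mathcal{T}'_\text{\rm r}({\bf k})}(\mathcal{T}'({\bf a}),m-w,\mathcal{T}'_\text{\rm v}({\bf w}))\,E_{\mathcal{T}''_\text{\rm r}({\bf k})}(\mathcal{T}''({\bf a}),w,\mathcal{T}''_\text{\rm v}({\bf w}))
\]
lets me pull the factor $E_{k-i-j}(\dotsb,w-j)$ outside the inner sums, isolating a sum over $w$ of that factor times a residual double sum over the subtree variables. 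In parallel, I would decompose the basis vectors: writing $F=B_1\cup\dotsb\cup B_t$ for a point $(B_1,\dotsc,B_h)\in\Omega_{\bf a}$, the orbit $\Omega_{\bf x}(Y)\times\Omega_{{\bf a}-{\bf x}}(Z)$ splits according to the value of $F$ into a disjoint union over $F\in\Omega_{\underline{a}_t}$ with $\lvert F\cap Y\rvert=m-w$ (where $w=x_{t+1}+\dotsb+x_h$), and on each piece the characteristic function equals the external product $[\sigma_{{\bf x}'}(F\cap Y)\otimes\sigma_{{\bf a}'-{\bf x}'}(F\setminus Y)]\otimes[\sigma_{{\bf x}''}(F^C\cap Y)\otimes\sigma_{{\bf a}''-{\bf x}''}(F^C\setminus Y)]$ with ${\bf x}=({\bf x}',{\bf x}'')$. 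This is precisely the Mackey decomposition \eqref{Mackey2} made explicit on basis vectors. Feeding this into the expansion from the previous step and performing, for each fixed $F$, the inner sum over the subtree variables, I recognize by Lemma \ref{LemmaPsifun} applied to $\mathcal{T}'$ and $\mathcal{T}''$ exactly the factors $\Psi(\mathcal{T}'({\bf a}),m-w,\mathcal{T}'_\text{\rm r}({\bf k}))\in M^{{\bf a}'}(F)$ and $\Psi(\mathcal{T}''({\bf a}),w,\mathcal{T}''_\text{\rm r}({\bf k}))\in M^{{\bf a}''}(F^C)$ (here $F\cap Y$ plays the role of the distinguished $(m-w)$-subset for the subtree). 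Summing over $F$ assembles $\Xi(\mathcal{T}({\bf a}),w,\mathcal{T}_\text{\rm r}({\bf k}))$, leaving the claimed expression $\sum_w E_{k-i-j}(\dotsb)\,\Xi(\dotsb)$.

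The hard part will be purely the bookkeeping of index ranges. One must check that the range of $w$ dictated by \eqref{jiat} coincides with $\max\{j,i-\underline{a}_t+m\}\leq w\leq\min\{n-\underline{a}_t-j,m-i\}$ in the statement (outside this range the factor $E_{k-i-j}$ vanishes, which reconciles it with the wider Mackey range in \eqref{Mackey2}); that the constraint $\lvert F\cap Y\rvert=m-w$ is equivalent to $x_1+\dotsb+x_t=m-w$; and that the degenerate cases in which $\beta$ and/or $\gamma$ is a leaf behave correctly. In those cases the corresponding subtree factor $\Psi'$ (resp. $\Psi''$) collapses to a single $\sigma_{m-w}$ (resp. $\sigma_w$), which matches exactly the leaf conventions built into the definitions of $E_{\mathcal{T}_\text{\rm r}({\bf k})}$ and of $\Xi$. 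Once these index identifications are in place the coefficient-by-coefficient equality is immediate and the proof is complete.
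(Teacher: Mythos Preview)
Your proposal is correct and follows exactly the approach the paper takes: the paper's proof is the single sentence ``Keeping into account the decomposition \eqref{Mackey2}, this formula is just a rearrangement of \eqref{Psifunctions},'' and your write-up is precisely that rearrangement spelled out in detail. The only minor quibble is the parenthetical remark that $E_{k-i-j}$ vanishes outside the stated range of $w$; you don't need this, since the sum over variable labelings in Lemma~\ref{LemmaPsifun} already restricts the root variable $w$ to the range \eqref{jiat}, which is the range in the Proposition.
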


\begin{proof}
Keeping into account the decomposition \eqref{Mackey2}, this formula is just a rearrangement of \eqref{Psifunctions}.
\end{proof}

\subsection{Regge's symmetries for the Hahn polynomials}\label{secRegge}
In this section, we want to show how the tools developed in the setting of the multidimensional Hahn polynomials may be also used to sketch a group theoretic proof/interpretation of the Regge symmetries for the one variable Hahn polynomials. 

\begin{lemma}\label{Regge}
The Hahn polynomials satisfy the following symmetry relations:

\begin{equation}\label{form5}
E_m(a,b,c,x)=(-1)^mE_m(b,a,c,c-x),
\end{equation}

\begin{equation}\label{formm7}
E_m(a,b,c,x)=(-1)^{a-x}\frac{m!(c-x)!(b-c+x)!}{(a+b-c-m)!(c-m)!(a-m)!}E_{a-m}(a,a+b-2m,a+b-m-c+x,x),
\end{equation}

\begin{equation}\label{form6}
E_m(a,b,c,x)=E_m(a+b-c,c,b,b-c+x).
\end{equation}

\end{lemma}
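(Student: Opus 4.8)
The plan is to handle the three relations by different means: \eqref{form5} by a direct reversal of the defining sum, \eqref{form6} by the group-theoretic $\sharp$-duality, and \eqref{formm7} -- the only delicate one -- by a hypergeometric sum-reversal. For \eqref{form5} I would substitute $j\mapsto m-j$ in the sum defining $E_m(b,a,c,c-x)$: using $-(c-x)=x-c$ and $(c-x)-c=-x$, the Pochhammer factors $(a-m+1)_\bullet,(x-c)_\bullet$ and $(b-m+1)_\bullet,(-x)_\bullet$ swap their lengths, while $\binom{m}{j}=\binom{m}{m-j}$ and $(-1)^{m-j}=(-1)^m(-1)^j$ produce the global sign, so that the sum becomes termwise $(-1)^mE_m(a,b,c,x)$. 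Group-theoretically this is just the invariance of the spherical function of Corollary \ref{spherical} under interchanging the two complementary blocks $A$ and $B$ (equivalently, the two sons of a two-leaf tree).

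For \eqref{form6} I would exploit the isomorphism $\eta\mapsto\eta^\sharp$ induced by $g\mapsto g^{-1}$ (Lemma \ref{LemmaPsifun}). Specialise to the two-leaf composition ${\bf a}=(a_1,a_2)$, $n=a_1+a_2$, so that $\Phi(\mathcal{T}({\bf a}),m,\mathcal{T}_{\mathrm r}({\bf k}))\in M^{n-m,m}$ has coefficients $E_k(a_2,a_1,m,w)$. Its image $\Psi$ lies in $M^{(a_1,a_2)}$, which I identify with $M^{n-a_1,a_1}$ via $(B_1,B_2)\mapsto B_1$; under this identification $\Psi$ is an $S_{n-m}\times S_m$-invariant function supported on the sets $B_1$ with $\lvert B_1\cap Y\rvert=m-w$. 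Since $(S_n,S_{n-m}\times S_m)$ is a Gelfand pair, $M^{n-a_1,a_1}$ is multiplicity free; its unique copy of $S^{n-k,k}$ carries a one-dimensional space of $S_{n-m}\times S_m$-invariants, spanned by the spherical function $\Phi(n,m,a_1,k)$ of Corollary \ref{spherical}, whose coefficients are $E_k(n-m,m,a_1,w')$ with $w'=a_1-m+w$. Hence $\Psi$ is a scalar multiple of $\Phi(n,m,a_1,k)$, and matching coefficients gives $E_k(n-a_1,a_1,m,w)=\lambda E_k(n-m,m,a_1,a_1-m+w)$; a single evaluation at an extreme value of $w$ forces $\lambda=1$, which is exactly \eqref{form6}. (Alternatively \eqref{form6} follows directly from the transformation formula \eqref{form7}.)

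Relation \eqref{formm7} is the main obstacle, precisely because it sends the degree $m$ to $a-m$: it cannot arise from any composition of the degree-preserving operations above (block interchange, $\sharp$-duality, complementation), and no single symmetry of the Johnson scheme produces it directly. I would therefore prove it analytically from the explicit defining sum by reversing the summation index and applying the reflection identities $(-x)_j=(-1)^j x!/(x-j)!$, $(\alpha)_{m-j}=(\alpha)_m/(\alpha+m-j)_j$ and $(\alpha)_r=(-1)^r(1-\alpha-r)_r$: these turn the degree-$m$ sum into a degree-$(a-m)$ sum with parameters $(a,a+b-2m,a+b-m-c+x)$ and simultaneously generate the prefactor $(-1)^{a-x}m!(c-x)!(b-c+x)!/[(a+b-c-m)!(c-m)!(a-m)!]$. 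The only genuine work is bookkeeping the summation limits and the factorials so that this constant comes out exactly. Conceptually \eqref{formm7} is the non-obvious Regge symmetry of the Clebsch-Gordan coefficients, which becomes transparent only through the Schur-Weyl identification developed in the final section; within the present section the sum-reversal argument is the most economical route.
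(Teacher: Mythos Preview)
Your treatments of \eqref{form5} and \eqref{form6} are correct and close in spirit to the paper: the direct reindexing $j\mapsto m-j$ for \eqref{form5} is in fact more elementary than what the paper does, and for \eqref{form6} both you and the paper use the $g\mapsto g^{-1}$ duality (the paper via Corollary~\ref{etatilde} rather than Lemma~\ref{LemmaPsifun}, but the content is the same).

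The real divergence is \eqref{formm7}. Your assertion that it ``cannot arise from any composition of the degree-preserving operations above \ldots\ and no single symmetry of the Johnson scheme produces it directly'' is precisely what the paper refutes. The paper passes to a \emph{three}-part composition $(a_1,a_2,a_3)\Vdash 2n$ with each $a_i\le n$ and works inside $S^{n,n}$. By Corollary~\ref{LR} (with $k=m=n$, forcing $k_2=n-a_1$) the space of $S_{a_1}\times S_{a_2}\times S_{a_3}$-invariants in $S^{n,n}$ is one-dimensional; an explicit spanning vector $\Phi_1(a_1,a_2,a_3)$ is written using the tree of Example~\ref{ah} with $h=3$. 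Any permutation $\pi$ of the three blocks sends $\Phi_1$ to a scalar multiple $\lambda_\pi\Phi_1$; evaluating three explicit coefficients of $\Phi_1$ pins down $\lambda_{(23)}$ and $\lambda_{(12)}$, and the resulting identities are exactly \eqref{form5} and \eqref{formm7}. So the degree change $m\mapsto a-m$ \emph{is} produced by a symmetric-group symmetry---the transposition of two of three blocks---and exhibiting this is the whole purpose of the subsection (``Regge's symmetries'').

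Your analytic fallback for \eqref{formm7} is also looser than you suggest: a bare reversal $j\mapsto m-j$ together with the Pochhammer reflection formulas leaves the summation length at $m+1$, so to land on a degree-$(a-m)$ expression you implicitly need a genuine terminating ${}_3F_2$ transformation of Whipple type, not merely ``bookkeeping''. That route can be made to work, but it is not the elementary sum-reversal you describe, and it forfeits the group-theoretic interpretation that the paper is after.
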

\begin{proof}
Consider the Hahn polynomials (and the associated intertwining functions) in the Example \ref{ah}. Suppose $h=1$ and take $2n$ in place of $n$. Let $(a_1,a_2,a_3)$ be a composition of $2n$ with $a_1,a_2,a_3\leq n$ and set $k_1=w_0=n$. These conditions force $k_2=n-a_1$. That is, $S^{(a_1)}\otimes S^{n,n-a_1}$ is the unique subrepresentation of $\text{Res}^{S_{2n}}_{S_{a_1}\times S_{a_2+a_3}}S^{n,n}$ of the form $S^{(a_1)}\otimes S^{a_2+a_3-j,j}$ (Corollary \ref{LR}). A consequence of this fact is that the dimension of the space of $S_{a_1}\times S_{a_2}\times S_{a_3}$-invariant vectors in $S^{n,n}$ is one (it is not zero because $n\geq a_1,a_2,a_3$). Moreover, from the results in section \ref{secPhiHahn}, we know that this space of invariant vectors is spanned by the function

\begin{equation}\label{Phi1}
\begin{split}
\Phi_1(a_1,a_2,a_3)=&\sum_{w=n-a_1}^n\sum_{u=\max\{0,w-a_2\}}^{\min\{a_3,w\}} (-1)^{n-w}a_1!(n-w)!(w-n+a_1)!\times\\
&\times E_{n-a_1}(a_3,a_2,w,u)\sigma_{n-w}(A_1)\otimes \sigma_{w-u}(A_2)\otimes \sigma_u(A_3).
\end{split}
\end{equation}

(we have also used the identity $E_{a_1}(a_1,a_1,a_1,w-n+a_1)=(-1)^{n-w}a_1!(n-w)!(w-n+a_1)!$; see \eqref{form9}). 
Clearly,
for every permutation $\pi$ of $\{1,2,3\}$, $\Phi_1(a_{\pi(1)},a_{\pi(2)},a_{\pi(3)})$ is still an $S_{a_1}\times S_{a_2}\times S_{a_3}$-invariant vector in $S^{n,n}$. Therefore there exists a complex number $\lambda_\pi$ such that $\Phi_1(a_1,a_2,a_3)=\lambda_\pi\Phi_1(a_{\pi(1)},a_{\pi(2)},a_{\pi(3)})$. Noting that

\begin{itemize}

\item
the coefficient of $\sigma_{a_1}(A_1)\otimes \sigma_{n-a_1}(A_2)\otimes \sigma_0(A_3)$ in \eqref{Phi1} is $(-1)^n(a_1!)^2(n-a_1)!(n-a_2+1)_{n-a_1}$,
\item
the coefficient of $\sigma_{a_1}(A_1)\otimes \sigma_0(A_2)\otimes \sigma_{n-a_1}(A_3)$ in \eqref{Phi1} is $(-1)^{a_1}(a_1!)^2(n-a_1)!(n-a_3+1)_{n-a_1}$,
\item
the coefficient of $\sigma_{n-a_2}(A_1)\otimes \sigma_{a_2}(A_2)\otimes \sigma_0(A_3)$ in \eqref{Phi1} is $(-1)^{a_1+a_2}a_1!a_2!a_3!$,
\end{itemize}

one can easily conclude that $\lambda_{(23)}=(-1)^{n-a_1}$ and $\lambda_{(12)}=(-1)^{n-a_3}\frac{a_1!(n-a_1)!}{a_2!(n-a_2)!}$. This yields \eqref{form5} and \eqref{formm7}. \\

To get \eqref{form6}, first note that if $\eta=\Phi(n,h,m,k)$ and $\eta_1=\Phi(n,m,h,k)$ then from Corollary \eqref{etatilde} we get

\[
\widetilde{\eta}(g)=\frac{(-1)^k}{(n-2k+2)_kk!}\langle \Phi(n,h,k,k),g\Phi(n,m,k,k)\rangle
\]

and

\[
\widetilde{\eta}_1(g)=\frac{(-1)^k}{(n-2k+2)_kk!}\langle \Phi(n,m,k,k),g\Phi(n,h,k,k)\rangle
\]

and therefore

\begin{equation}\label{symmetry}
\widetilde{\eta}(g^{-1})=\widetilde{\eta}_1(g).
\end{equation}

On the other hand, if $A,B$ are as in section \ref{secLR} and $Y,Z$ as in Lemma \ref{RmkSnk}, then

\begin{equation}\label{nmkknhkk}
\widetilde{\eta}(g)=E_k(n-h,h,m,\lvert gY\cap B\rvert)\qquad\quad\text{and}\qquad\quad\widetilde{\eta}_1(g)=E_k(n-m,m,h,\lvert gA\cap Z\rvert).
\end{equation}

Indeed, $\sigma_{m-w}(A)\otimes \sigma_w(B)$ is the characteristic function of the set $\{F\in\Omega_m:\lvert F\cap B\rvert=w\}$. Then \eqref{form6} follows from \eqref{symmetry} and \eqref{nmkknhkk}, noting that $\lvert g^{-1}Y\cap B\rvert=m-\lvert Y\cap gA\rvert=m-h+\lvert gA\cap Z\rvert$.
\end{proof}

To give a more symmetric formulation of Lemma \ref{Regge}, we introduce a Regge's array notation for the Hahn polynomials. Set

\[
\begin{split}
\boxed{
\begin{array}{ccc}
a_{11}&a_{12}&a_{13}\\
a_{21}&a_{22}&a_{23}\\
a_{31}&a_{32}&a_{33}
\end{array}}
=&(-1)^{a_{11}+a_{22}+a_{33}}\left[\frac{a_{31}!a_{32}!a_{13}!a_{23}!}{a_{22}!a_{21}!a_{11}!a_{12}!a_{33}!(a_{11}+a_{12}+a_{13})!}\right]^{1/2}\times\\
&\times E_{a_{33}}(a_{11}+a_{12},a_{21}+a_{22},a_{12}+a_{22},a_{12}).
\end{split}
\]

In the array, the sum of each row and the column has the same value $L$, and this fact determine $a_{13},a_{23},a_{31},a_{32},a_{33}$. 
The array is invariant under transposition and under even permutations of the rows (or of the columns), while is multiplied by $(-1)^L$ under an odd permutation of the rows (or of the columns). Indeed, the invariance under transposition is equivalent to \eqref{form6}, exchanging the first row with the second row we obtain \eqref{form5}, and exchanging the second row with the third row we obtain \eqref{formm7}.

\begin{remark}
{\rm
In Lemma \ref{Regge} we have used two different invariant vectors to get all the symmetry relations for the Hahn polynomials. We suppose that developing the techniques in \cite{ScarabottiSabc} (namely the theory of $S_{a_1}\times S_{a_2}\times S_{a_3}-S_{b_1}\times S_{b_2}\times S_{b_3}$-invariant vectors on $S_n$) one might get a single vector whose invariance properties yield all the Regge symmetries. This approach might be connected with \cite{Flamand}.

}
\end{remark}

\section{Hahn polynomials and Clebsch-Gordan coefficients}

\subsection{Clebsch-Gordan coefficients}

Now we recall some basic facts on the representation theory of $SU(2)$ \cite{BtD} and the theory of Clebsch-Gordan coefficients \cite{Koornwinder,K-V}.
For $l\in\frac{1}{2}\mathbb{Z}\equiv \left\{0,\frac{1}{2},1,\frac{3}{2},2,\dotsc\right\}$,
let $V_l$ be the space of all homogeneous polynomials of degree $2l$ in two variables $x,y$. We can define a representation $T^l$ of the group $SU(2)$ on $V_l$ by setting

\[
[T^l(g)P](x,y)=P(ax+cy,bx+dy)
\]

for $g= \bigl(\begin{smallmatrix}a&b\\c&d
\end{smallmatrix}\bigr)\in SU(2)$ and $P\in V_l$;
the set $\{T^l:l\in\frac{1}{2}\mathcal{Z}\}$ form a complete list of irreducible representations of $S(U(2)$. In $V_l$ we introduce the basis $\psi^l_j=\sqrt{\binom{2l}{l-j}} x^{l-j}y^{l+j}$, $\lvert j\rvert \leq l$ (where this means that $j=-l,-l+1,\dotsc,l$) and we suppose that $V_l$ is endowed with a scalar product in which this basis is orthonormal. This basis behaves nicely with respect to the restriction of $V_l$ to the subgroup $U(1)=\left\{\bigl(\begin{smallmatrix}a&0\\0&a^{-1}
\end{smallmatrix}\bigr):\lvert a\rvert =1\right\}$: if $g=\bigl(\begin{smallmatrix}a&0\\0&a^{-1}
\end{smallmatrix}\bigr)$ then $T^l(g)\psi_j^l=a^{-2j}\psi_j^l$, that is $\psi_j^l$ spans an $U(1)$-invariant one dimensional subspace of $V_l$ (corresponding to the character $a\mapsto a^{-2j}$) and for different values of $j$ we get different characters of $U(1)$. The tensor product of two irreducible representations of $SU(2)$ decomposes without multiplicity and according to the Clebsch-Gordan formula :

\begin{equation}\label{Clebsch}
V_{l_1}\otimes V_{l_2}=\bigoplus_{l=\lvert l_1-l_2\rvert}^{l_1+l_2} V_l.
\end{equation}

In the tensor product $V_{l_1}\otimes V_{l_2}$ we may introduce two orthonormal bases: the first is simply $\psi_{j_1}^{l_1}\otimes \psi_{j_2}^{l_2}$, $\lvert j_1\rvert\leq l_1$, $\lvert j_2\rvert\leq l_2$. But we can also introduce a basis that behaves nicely with respect to the Clebsch-Gordan decomposition
\eqref{Clebsch}: in each subspace $V_l$ in the right hand side of \eqref{Clebsch} we can choose a vector corresponding to $\psi^l_j$, and we denote this vector by $\phi_j^{l_1,l_2,l}$.
These vectors are uniquely determined by the requirement that they form an orthonormal basis plus the condition 

\begin{equation}\label{positivity}
\langle \phi_l^{l_1,l_2,l}, \psi_{l_1}^{l_1}\otimes \psi_{l-l_1}^{l_2}\rangle>0. 
\end{equation}

Indeed, this additional condition determine a unique unitary operator that intertwines $V_l$ and $V_{l_1}\otimes V_{l_2}$. The vectors of the second basis may be expressed in term of the vectors of the first basis:

\begin{equation}\label{CGcoef}
\phi_j^{l_1,l_2,l}=\sum_{j_1=-l_1}^{l_1}\sum_{j_2=-l_2}^{l_2}C_{j_1,j_2,j}^{l_1,l_2,l}\;
\psi_{j_1}^{l_1}\otimes \psi_{j_2}^{l_2}.
\end{equation}

The numbers $C_{j_1,j_2,j}^{l_1,l_2,l}$ are called {\em Clebsch-Gordan coefficients}. Note that we have $C^{l_1,l_2,l}_{j_1,j_2,j}=0$ if $j_1+j_2\neq j$.

\subsection{The reciprocity between the Gelfand pair $(S_n,S_{n-m}\times S_m)$ and the Clebsch-Gordan decomposition}\label{recgelcle}

The representation theory of the classical groups $GL(n,\mathbb{C}),U(n)$ and $SU(n)$ can be obtained from the representation theory of the symmetric group, using the so called Schur-Weyl reciprocity; see \cite{Ste}. In this paper, we use James version of this reciprocity \cite{Ja2}, section 26, which is quite useful for our purposes. 
Set $A=\{1,2,\dotsc,h\}$ and $B=\{h+1,h+2,\dotsc,n\}$, as in section \ref{secLR}.
For $h=0,1,\dotsc,n$ we define a linear map $\mathcal{X}_h:M^{n-m,m}\rightarrow V_{\frac{n-m}{2}}\otimes V_{\frac{m}{2}}$ by: if $X\in\Omega_m$ and $\lvert X\cap B\rvert=w$ 
then we set

\[
\mathcal{X}_h\delta_X=\frac{1}{\sqrt{\binom{n-m}{h-m+w}\binom{m}{w}}}\psi_{\frac{n+m}{2}-h-w}^{\frac{n-m}{2}}\otimes \psi_{w-\frac{m}{2}}^{\frac{m}{2}}\equiv 
x_1^{h-m+w}y_1^{n-w-h}\otimes x_2^{m-w}y_2^w.
\]

In other words, the exponents of $x_1,y_1,x_2,y_2$ are equal respectively to the cardinalities of the sets $X^C\cap A,X^C\cap B,X\cap A,X\cap B$. The following decomposition is obvious:

\[
V_{\frac{n-m}{2}}\otimes V_{\frac{m}{2}}=\bigoplus_{h=0}^n\text{Im}\mathcal{X}_h.
\]

Moreover, $\text{Im}\mathcal{X}_h$ is the subspace of $\text{Res}^{SU(2)}_{SU(1)}(V_{\frac{n-m}{2}}\otimes V_{\frac{m}{2}})$ corresponding to the character $a\mapsto a^{2h-n}$. Clearly,

\begin{equation}\label{Xhsigma}
\mathcal{X}_h\left[\sigma_{m-w}(A)\otimes \sigma_w(B)\right]=\binom{h}{m-w}\binom{n-h}{w}x_1^{h-m+w}y_1^{n-w-h}\otimes x_2^{m-w}y_2^w
\end{equation}

and the restriction of $\mathcal{X}_h$ to the space of $S_{n-h}\times S_h$-invariant vectors in $M^{n-m,m}$ (that is \eqref{Xhsigma}) is injective.

\begin{lemma}\label{Xhdfpartial}
For every $f\in M^{n-m,m}$ we have

\[
\mathcal{X}_hdf=(x_1\frac{\partial}{\partial x_2}+y_1\frac{\partial}{\partial y_2})\mathcal{X}_hf\qquad\quad\text{and}\quad\qquad \mathcal{X}_hd^*f=(x_2\frac{\partial}{\partial x_1}+y_2\frac{\partial}{\partial y_1})\mathcal{X}_hf.
\]
\end{lemma}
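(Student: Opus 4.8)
The plan is to verify both formulas on the basis $\{\delta_X:X\in\Omega_m\}$ of $M^{n-m,m}$, exploiting that every operator in sight is linear. The decisive simplification is to use the monomial description of $\mathcal{X}_h$ rather than the $\psi$-description: the definition says precisely that $\mathcal{X}_h\delta_X=x_1^{|X^C\cap A|}y_1^{|X^C\cap B|}\otimes x_2^{|X\cap A|}y_2^{|X\cap B|}$, so the four exponents record the cardinalities of $X^C\cap A,\ X^C\cap B,\ X\cap A,\ X\cap B$, and the normalizing factors $\sqrt{\binom{\cdot}{\cdot}}$ disappear entirely. One should note that the symbol $\mathcal{X}_h$ on the left-hand side of the first (resp.\ second) identity is the map out of $M^{n-m+1,m-1}$ (resp.\ $M^{n-m-1,m+1}$), but this causes no difficulty, since the monomial description is uniform in the value of $m$.

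For the first identity, fix $X$ with $w=|X\cap B|$, so $|X\cap A|=m-w$, and expand $d\delta_X=\sum_{x\in X}\delta_{X\setminus\{x\}}$ by splitting according to whether the deleted point lies in $A$ or in $B$. Deleting one of the $m-w$ points of $X\cap A$ raises the $x_1$-exponent by one and lowers the $x_2$-exponent by one, while deleting one of the $w$ points of $X\cap B$ raises the $y_1$-exponent and lowers the $y_2$-exponent; applying $\mathcal{X}_h$ termwise gives $(m-w)\,x_1^{h-m+w+1}y_1^{n-h-w}\otimes x_2^{m-w-1}y_2^w + w\,x_1^{h-m+w}y_1^{n-h-w+1}\otimes x_2^{m-w}y_2^{w-1}$. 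On the other side, $x_1\partial_{x_2}$ transfers one degree from the $x_2$-slot to the $x_1$-slot of $\mathcal{X}_h\delta_X$, bringing down the factor $m-w$, and $y_1\partial_{y_2}$ does the same for the $y$-slots, bringing down $w$; the two monomials produced agree with those above, which proves the first formula.

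The second identity is the mirror image, with $X$ and $X^C$ interchanged. Now $d^*\delta_X=\sum_{x\notin X}\delta_{X\cup\{x\}}$ splits according to whether the adjoined point lies in $X^C\cap A$ (there are $h-m+w$ of them) or in $X^C\cap B$ (there are $n-h-w$), and adjoining a point lowers one of the first-factor exponents while raising the matching second-factor exponent. This is exactly what $x_2\partial_{x_1}$ and $y_2\partial_{y_1}$ produce, the differentiations bringing down the factors $h-m+w$ and $n-h-w$ respectively. No genuine obstacle arises: the content is a bijective matching of ``move one point between $X$ and $X^C$'' with ``shift one degree between the two tensor factors,'' and the only thing needing care, namely the exponent and coefficient bookkeeping, is rendered transparent by working with the monomial form of $\mathcal{X}_h$.
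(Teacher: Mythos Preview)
Your proof is correct. It differs from the paper's in a useful way: the paper first checks the identity only on the $S_{n-h}\times S_h$-invariant functions $\sigma_{m-w}(A)\otimes\sigma_w(B)$, computing $d$ on those and comparing with the differential operator applied to \eqref{Xhsigma}; it then needs a second step, observing that $\mathcal{X}_h$ factors through the orthogonal projection $P_m$ onto invariants and that $P_{m-1}d=dP_m$, to pass from invariant $f$ to arbitrary $f$. You bypass this detour by working directly on the basis $\{\delta_X\}$, where the monomial description of $\mathcal{X}_h$ already gives a single term with no binomial factors, so the combinatorics of moving one point in or out of $X$ matches the effect of the first-order operators on the nose. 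Your route is shorter and makes the ``move a point~$\leftrightarrow$~shift a degree'' bijection explicit; the paper's route has the minor conceptual advantage of isolating that $\mathcal{X}_h$ really only sees the invariant part, which is the germ of the later argument relating $\mathcal{X}_h$ to Clebsch--Gordan coefficients.
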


\begin{proof}
First of all, suppose that $f=\sigma_{m-w}(A)\otimes \sigma_w(B)$. Then we have

\[
\begin{split}
\mathcal{X}_hd[\sigma_{m-w}(A)\otimes \sigma_w(B)]=&\mathcal{X}_h[(h-m+w+1)\sigma_{m-w-1}(A)\otimes \sigma_w(B)\\
&+(n-h-w+1)\sigma_{m-w}(A)\otimes \sigma_{w-1}(B)]\\
=&(h-m+w+1)\binom{h}{m-w-1}\binom{n-h}{w}x_1^{h-m+w+1}y_1^{n-w-h}\otimes x_2^{m-w-1}y_2^w\\
&+(n-h-w+1)\binom{h}{m-w}\binom{n-h}{w-1}x_1^{h-m+w}y_1^{n-w-h+1}\otimes x_2^{m-w}y_2^{w-1}\\
=&(m-w)\binom{h}{m-w}\binom{n-h}{w}x_1^{h-m+w+1}y_1^{n-w-h}\otimes x_2^{m-w-1}y_2^w\\
&+w\binom{h}{m-w}\binom{n-h}{w}x_1^{h-m+w}y_1^{n-w-h+1}\otimes x_2^{m-w}y_2^{w-1}\\
=&(x_1\frac{\partial}{\partial x_2}+y_1\frac{\partial}{\partial y_2})\mathcal{X}_h[\sigma_{m-w}(A)\otimes \sigma_w(B)].
\end{split}
\]

Therefore we have proved the first identity when $f$ is $S_{n-h}\times S_h$-invariant.
Now let $P_m$ be the orthogonal projection from $M^{n-m,m}$ onto the subspace of $S_{n-h}\times S_h$-invariant functions.
Clearly, if $\lvert X\cap B\rvert=w$ then $P_m\delta_X=\frac{1}{\binom{h}{m-w}\binom{n-h}{w}}\sigma_{m-w}(A)\otimes\sigma_w(B)$.
From this fact it follows that $P_{m-1}d=dP_m$ and $\mathcal{X}_hP_m=\mathcal{X}_h$. Therefore, for any $f\in M^{n-m,m}$ we have 

\[
\mathcal{X}_hdf=\mathcal{X}_hP_{m-1}df=\mathcal{X}_hdP_mf=(x_1\frac{\partial}{\partial x_2}+y_1\frac{\partial}{\partial y_2})\mathcal{X}_hP_mf
=(x_1\frac{\partial}{\partial x_2}+y_1\frac{\partial}{\partial y_2})\mathcal{X}_hf.
\]

The proof for $d^*$ is the same.
\end{proof}

The proof of the following Lemma is easy.

\begin{lemma}\label{partialcommutes}
The operators $(x_1\frac{\partial}{\partial x_2}+y_1\frac{\partial}{\partial y_2}):V_{\frac{n-m}{2}}\otimes V_{\frac{m}{2}}\rightarrow V_{\frac{n-m+1}{2}}\otimes V_{\frac{m-1}{2}}$ and $(x_2\frac{\partial}{\partial x_1}+y_2\frac{\partial}{\partial y_1}):V_{\frac{n-m}{2}}\otimes V_{\frac{m}{2}}\rightarrow V_{\frac{n-m-1}{2}}\otimes V_{\frac{m+1}{2}}$ commute with the action of $SU(2)$.

\end{lemma}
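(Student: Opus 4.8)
The plan is to realize both operators as substitution-intertwiners and then verify the commutation by a single chain-rule computation. Recall that the action of $g=\bigl(\begin{smallmatrix}a&b\\c&d\end{smallmatrix}\bigr)\in SU(2)$ on $V_{\frac{n-m}{2}}\otimes V_{\frac{m}{2}}$ is, after writing an element as a polynomial $F(x_1,y_1,x_2,y_2)$ homogeneous of bidegree $(n-m,m)$ in the pairs $(x_1,y_1)$ and $(x_2,y_2)$, nothing but the substitution operator
\[
(\rho(g)F)(x_1,y_1,x_2,y_2)=F(ax_1+cy_1,\;bx_1+dy_1,\;ax_2+cy_2,\;bx_2+dy_2).
\]
The point to stress is that this formula is \emph{uniform} in the four variables, so the $SU(2)$-action on the target $V_{\frac{n-m+1}{2}}\otimes V_{\frac{m-1}{2}}$ (whose elements have bidegree $(n-m+1,m-1)$) is given by the very same substitution. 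Hence proving that $D_1:=x_1\frac{\partial}{\partial x_2}+y_1\frac{\partial}{\partial y_2}$ commutes with the action amounts exactly to the operator identity $D_1\rho(g)=\rho(g)D_1$ for every $g$.

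First I would compute $\rho(g)D_1F$. Applying $\rho(g)$ substitutes the linear forms into every occurrence of $x_1,y_1$, so the prefactors become $ax_1+cy_1$ and $bx_1+dy_1$, while the derivatives of $F$ get evaluated at the substituted point, which I denote by a superscript $g$; this yields $\rho(g)D_1F=(ax_1+cy_1)(\partial_{x_2}F)^g+(bx_1+dy_1)(\partial_{y_2}F)^g$. Next I would compute $D_1\rho(g)F$ by the chain rule: since only the third and fourth arguments of $F$ depend on $x_2,y_2$, one gets $\partial_{x_2}(\rho(g)F)=a(\partial_{x_2}F)^g+b(\partial_{y_2}F)^g$ and $\partial_{y_2}(\rho(g)F)=c(\partial_{x_2}F)^g+d(\partial_{y_2}F)^g$. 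Multiplying the first by $x_1$, the second by $y_1$ and adding, the coefficient of $(\partial_{x_2}F)^g$ is $ax_1+cy_1$ and that of $(\partial_{y_2}F)^g$ is $bx_1+dy_1$, which is precisely the expression found for $\rho(g)D_1F$. Thus $D_1\rho(g)=\rho(g)D_1$, and the operator $D_2:=x_2\frac{\partial}{\partial x_1}+y_2\frac{\partial}{\partial y_1}$ is handled verbatim after exchanging the indices $1$ and $2$.

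There is no genuine obstacle here, only a point requiring care: the bookkeeping must confirm that the matrix entries $a,b,c,d$ produced by differentiating the substituted arguments recombine with the multiplication variables $x_1,y_1$ to reproduce exactly the substituted prefactors $ax_1+cy_1$ and $bx_1+dy_1$. Conceptually this is the classical fact that the polarization operator contracting the covariant vector $(x_1,y_1)$ against the operator vector $(\partial_{x_2},\partial_{y_2})$ is equivariant; since the computation never uses $g\in SU(2)$ but only that $g$ acts by a linear substitution, the same argument in fact shows that $D_1$ and $D_2$ commute with the full $GL(2,\mathbb{C})$-action.
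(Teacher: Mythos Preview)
Your argument is correct: the chain-rule computation shows $D_1\rho(g)=\rho(g)D_1$ (and symmetrically for $D_2$), and you have been careful about the one subtle point, namely that the linear coefficients $a,b,c,d$ coming from the chain rule recombine with the prefactors $x_1,y_1$ to reproduce the substituted variables. The paper itself gives no proof beyond declaring the lemma ``easy'', so your direct verification is exactly the kind of computation the author has in mind; your closing remark that the argument in fact works for all of $GL(2,\mathbb{C})$ is a nice bonus.
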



Now we can write the reciprocal of \ref{corradtr2} in Theorem \ref{corradtr}.

\begin{corollary}\label{Clebsch2}
The Clebsch-Gordan decomposition \eqref{Clebsch} may be also written in the form
\[
V_{\frac{n-m}{2}}\otimes V_{\frac{m}{2}}=\bigoplus_{k=0}^{\min\{n-m,m\}}\left(x_2\frac{\partial}{\partial x_1}+y_2\frac{\partial}{\partial y_1}\right)^{m-k}\left[(V_{\frac{n-k}{2}}\otimes V_{\frac{k}{2}})\cap \text{\rm Ker}(x_1\frac{\partial}{\partial x_2}+y_1\frac{\partial}{\partial y_2})\right]
\]

where the space $(V_{\frac{n-k}{2}}\otimes V_{\frac{k}{2}})\cap \text{\rm Ker}(x_1\frac{\partial}{\partial x_2}+y_1\frac{\partial}{\partial y_2})$ is isomorphic to $V_{\frac{n}{2}-k}$ and coincides with $\bigoplus_{h=k}^{n-k}\mathcal{X}_hS^{n-k,k}$.
\end{corollary}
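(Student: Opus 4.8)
The plan is to transport the decomposition in Theorem \ref{corradtr} (item \ref{corradtr2}) to the $SU(2)$-side through the maps $\mathcal{X}_h$, the bridge being Lemmas \ref{Xhdfpartial} and \ref{partialcommutes}. Throughout, denote by $D$ the operator $x_1\frac{\partial}{\partial x_2}+y_1\frac{\partial}{\partial y_2}$ and by $D^*$ the operator $x_2\frac{\partial}{\partial x_1}+y_2\frac{\partial}{\partial y_1}$, so that Lemma \ref{Xhdfpartial} reads $\mathcal{X}_h d=D\mathcal{X}_h$ and $\mathcal{X}_h d^*=D^*\mathcal{X}_h$, while Lemma \ref{partialcommutes} says that $D$ and $D^*$ are $SU(2)$-equivariant; in particular $R_q=(d^*)^q/q!$ corresponds to $(D^*)^q/q!$ and $\text{\rm Ker}\,D$ is an $SU(2)$-subrepresentation.

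First I would identify the ``top'' piece. For $f\in S^{n-k,k}=M^{n-k,k}\cap\text{\rm Ker}\,d$ one has $D\mathcal{X}_h f=\mathcal{X}_h df=0$, so $\mathcal{X}_h S^{n-k,k}\subseteq (V_{\frac{n-k}2}\otimes V_{\frac k2})\cap\text{\rm Ker}\,D$; since $\text{\rm Im}\,\mathcal{X}_h$ carries the $U(1)$-weight $a\mapsto a^{2h-n}$, distinct $h$ give distinct weights and the sum over $h$ is direct. Because $\mathcal{X}_h\delta_X$ depends only on $|X\cap B|$, the map $\mathcal{X}_h$ factors through the projection onto $(M^{n-k,k})^{S_{n-h}\times S_h}$ and, by \eqref{Xhsigma}, is injective there; hence $\dim\mathcal{X}_h S^{n-k,k}=\dim (S^{n-k,k})^{S_{n-h}\times S_h}$, which by Corollary \ref{spherical} (with $m=k$) equals $1$ when $k\le h\le n-k$ and $0$ otherwise. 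Thus $\bigoplus_{h=k}^{n-k}\mathcal{X}_h S^{n-k,k}$ has dimension $n-2k+1$.

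Next I would pin down the $SU(2)$-structure of the kernel. By \eqref{Clebsch} the source $V_{\frac{n-k}2}\otimes V_{\frac k2}=\bigoplus_{l=\frac n2-k}^{\frac n2}V_l$ and the target $V_{\frac{n-k+1}2}\otimes V_{\frac{k-1}2}=\bigoplus_{l=\frac n2-k+1}^{\frac n2}V_l$ of the equivariant map $D$ differ only in the bottom constituent $V_{\frac n2-k}$, so by Schur's lemma $\text{\rm Ker}\,D\cong V_{\frac n2-k}$ \emph{provided} $D$ does not vanish on the higher constituents. This nonvanishing is the step I expect to be the main obstacle, and I would settle it combinatorially: taking $f$ to span the invariant line of $S^{n-k,k}$, Corollary \ref{spherical} and item \ref{corradtr1} of Theorem \ref{corradtr} give $R_{m-k}f\propto\Phi(n,h,m,k)\neq0$, and injectivity of $\mathcal{X}_h$ on invariants forces $(D^*)^{m-k}\mathcal{X}_h f=(m-k)!\,\mathcal{X}_h R_{m-k}f\neq0$; equivalently $D^*$ (hence, dually, $D$) is nondegenerate on the relevant components. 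Since $\dim V_{\frac n2-k}=n-2k+1$ matches the dimension found above and $\bigoplus_{h=k}^{n-k}\mathcal{X}_h S^{n-k,k}\subseteq\text{\rm Ker}\,D$, the two coincide, giving $(V_{\frac{n-k}2}\otimes V_{\frac k2})\cap\text{\rm Ker}\,D=\bigoplus_{h=k}^{n-k}\mathcal{X}_h S^{n-k,k}\cong V_{\frac n2-k}$.

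Finally I would assemble the full decomposition. Iterating Lemma \ref{Xhdfpartial} yields $\mathcal{X}_h R_{m-k}f=\frac1{(m-k)!}(D^*)^{m-k}\mathcal{X}_h f$ for $f\in S^{n-k,k}$; applying every $\mathcal{X}_h$ to item \ref{corradtr2} of Theorem \ref{corradtr} and using $V_{\frac{n-m}2}\otimes V_{\frac m2}=\bigoplus_{h=0}^n\text{\rm Im}\,\mathcal{X}_h$, the $k$-th summand $R_{m-k}S^{n-k,k}$ is carried onto $(D^*)^{m-k}\big[(V_{\frac{n-k}2}\otimes V_{\frac k2})\cap\text{\rm Ker}\,D\big]$. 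By equivariance and the nonvanishing just established, $(D^*)^{m-k}$ restricts to an $SU(2)$-isomorphism of $V_{\frac n2-k}$ onto a copy inside $V_{\frac{n-m}2}\otimes V_{\frac m2}$; distinct $k$ give distinct labels $l=\frac n2-k$, so these copies are independent, and $\sum_{k=0}^{\min\{n-m,m\}}(n-2k+1)=(n-m+1)(m+1)=\dim\big(V_{\frac{n-m}2}\otimes V_{\frac m2}\big)$ shows that they exhaust the tensor product. This gives the displayed decomposition together with both claimed identifications of the summands.
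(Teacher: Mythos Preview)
Your approach is exactly what the paper intends: the corollary is stated without proof, and the only reasonable reading is that it follows by transporting Theorem~\ref{corradtr}\,(\ref{corradtr2}) through the intertwining relations of Lemma~\ref{Xhdfpartial} and the $SU(2)$-equivariance of Lemma~\ref{partialcommutes}, precisely as you do. The one loose step is the phrase ``hence, dually, $D$'': the nonvanishing of $(D^*)^{m-k}$ on $\mathcal{X}_h f$ (which lies in the \emph{bottom} piece) does not by itself show that $D$ is injective on the \emph{higher} constituents $V_l$, $l>\tfrac{n}{2}-k$; however, your final dimension count already closes this, since the direct-sum identity $V_{\frac{n-m}{2}}\otimes V_{\frac m2}=\bigoplus_k (D^*)^{m-k}\!\bigl[\bigoplus_h\mathcal{X}_hS^{n-k,k}\bigr]$ (which follows solidly from the injectivity of $\mathcal{X}_h$ on invariants) specialized to $m=k$ and combined with multiplicity-freeness of \eqref{Clebsch} forces $\bigoplus_h\mathcal{X}_hS^{n-k,k}=V_{\frac n2-k}$ by induction on $k$, making the middle paragraph's separate argument unnecessary.
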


\begin{theorem}

\begin{multline}\label{ClebschHahn}
C^{\frac{n-m}{2},\frac{m}{2},\frac{n}{2}-k}_{\frac{n+m}{2}-h-w,w-\frac{m}{2},\frac{n}{2}-h}\\
=(-1)^k
\left[\frac{(h-k)!(n-h-k)!(m-k)!(n-m-k)!(n-2k+1)}{(m-w)!(h-m+w)!(n-h-w)!w!(n-k+1)!k!}\right]^{1/2}
E_k(n-h,h,m,w)
\end{multline}
\end{theorem}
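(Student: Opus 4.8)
The plan is to realize the two sides of \eqref{ClebschHahn} as the coordinates of one and the same vector of $V_{\frac{n-m}{2}}\otimes V_{\frac{m}{2}}$, written in two different orthonormal bases. Concretely, I would start from the spherical vector $\Phi(n,h,m,k)$ of Corollary \ref{spherical}, which spans the $S_{n-h}\times S_h$-invariants of $R_{m-k}S^{n-k,k}$, and push it through $\mathcal{X}_h$. Evaluating $\mathcal{X}_h$ on each $\sigma_{m-w}(A)\otimes\sigma_w(B)$ by \eqref{Xhsigma} and rewriting the monomials in the orthonormal basis $\psi_{j_1}^{\frac{n-m}{2}}\otimes\psi_{j_2}^{\frac{m}{2}}$ (with $j_1=\frac{n+m}{2}-h-w$, $j_2=w-\frac{m}{2}$) gives
\[
\mathcal{X}_h\Phi(n,h,m,k)=\sum_w E_k(n-h,h,m,w)\,\frac{h!\,(n-h)!}{\sqrt{(m-w)!(h-m+w)!\,w!\,(n-h-w)!\,(n-m)!\,m!}}\;\psi_{j_1}^{\frac{n-m}{2}}\otimes\psi_{j_2}^{\frac{m}{2}}.
\]
Since $\Phi(n,h,m,k)=R_{m-k}\Phi(n,h,k,k)$ by \eqref{RqTm}, Corollary \ref{Clebsch2} together with Lemma \ref{Xhdfpartial} shows this vector lies in the unique copy of $V_{\frac{n}{2}-k}$ inside $V_{\frac{n-m}{2}}\otimes V_{\frac{m}{2}}$ and carries the $U(1)$-weight $\frac{n}{2}-h$; hence it is a scalar multiple $\mu_h\,\phi_{\frac{n}{2}-h}^{\frac{n-m}{2},\frac{m}{2},\frac{n}{2}-k}$ of the Clebsch--Gordan basis vector. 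Comparing with \eqref{CGcoef} then reads off $C$ as the displayed coefficient divided by $\mu_h$, with $E_k(n-h,h,m,w)$ already in the right place, so that everything reduces to identifying $\mu_h$.

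Next I would compute $|\mu_h|$. Since the $\phi_j$ are orthonormal, $|\mu_h|^2=\lVert\mathcal{X}_h\Phi(n,h,m,k)\rVert^2$, which by orthonormality of the $\psi_{j_1}\otimes\psi_{j_2}$ equals $\sum_w E_k(n-h,h,m,w)^2$ times the square of the coefficient above. Using $\binom{h}{m-w}\binom{n-h}{w}=\frac{h!(n-h)!}{(m-w)!(h-m+w)!w!(n-h-w)!}$ and the analogous identity for $\binom{n-m}{h-m+w}\binom{m}{w}$, the $w$-dependent weight collapses to exactly $\binom{n-h}{w}\binom{h}{m-w}$, so the sum becomes the diagonal case of the Hahn orthogonality relation \eqref{orthrel} with $a=n-h$, $b=h$, $c=m$ and degree $k$. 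Converting its Pochhammer symbols to factorials yields
\[
|\mu_h|^2=\frac{(h!)^2((n-h)!)^2\,k!\,(n-k+1)!}{(n-m)!\,m!\,(n-2k+1)\,(h-k)!\,(n-h-k)!\,(m-k)!\,(n-m-k)!},
\]
whose square root is precisely the radical in \eqref{ClebschHahn}; substituting back cancels all the $h!\,(n-h)!$ and $(n-m)!\,m!$ factors and reproduces the stated formula up to the sign of $\mu_h$.

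The remaining, and genuinely delicate, point is the phase. Everything is real, so $\mu_h=\pm|\mu_h|$ and I must show the sign is $(-1)^k$, uniformly in $h$. I would anchor it at the highest weight $h=k$, where $\frac{n}{2}-h=l$: there $\phi_l$ is normalized by the positivity condition \eqref{positivity}, while the extreme coefficient of $\mathcal{X}_k\Phi(n,k,m,k)$ involves a Hahn value which the defining sum (or, in the companion $m=k$ evaluation, \eqref{form9} giving $E_k(n-k,k,k,0)=(-1)^k k!(n-2k+1)_k$) shows to equal $(-1)^k$ times a positive number; this forces $\operatorname{sign}\mu_k=(-1)^k$. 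To carry the sign to all $h$ I would use that the passage from $V_{\frac{n-k}{2}}\otimes V_{\frac{k}{2}}$ to $V_{\frac{n-m}{2}}\otimes V_{\frac{m}{2}}$ is effected by the single $SU(2)$-equivariant operator $\left(x_2\frac{\partial}{\partial x_1}+y_2\frac{\partial}{\partial y_1}\right)^{m-k}$ of Lemma \ref{Xhdfpartial}, which by Schur's lemma acts as one $h$-independent scalar on the irreducible $V_{\frac{n}{2}-k}$, together with the Condon--Shortley lowering relations that link the $\phi_{\frac{n}{2}-h}$ across weights with positive coefficients. The main obstacle is exactly this bookkeeping: verifying that the connecting constants are positive, so that the single sign $(-1)^k$ established at $h=k$ indeed propagates to every weight.
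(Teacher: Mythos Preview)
Your approach is essentially the paper's own proof: push $\Phi(n,h,m,k)$ through $\mathcal{X}_h$ via \eqref{Xhsigma}, recognize the image as lying in the copy of $V_{\frac{n}{2}-k}$ inside $V_{\frac{n-m}{2}}\otimes V_{\frac{m}{2}}$ at weight $\frac{n}{2}-h$ (Corollary~\ref{Clebsch2}), compute the modulus of the proportionality constant by the Hahn orthogonality \eqref{orthrel}, and fix the sign at the highest weight $h=k$ using the positivity convention \eqref{positivity}. Your norm computation and the resulting radical are correct.

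Two small remarks on the sign step. First, the specific Hahn value you need at $h=k$ is $E_k(n-k,k,m,m-k)$ (the coefficient of $\psi^{\frac{n-m}{2}}_{\frac{n-m}{2}}\otimes\psi^{\frac{m}{2}}_{\frac{m}{2}-k}$ corresponds to $w=m-k$), not $E_k(n-k,k,k,0)$; the paper handles this by first applying the symmetry \eqref{form6} to turn it into $E_k(n-m,m,k,0)$, after which \eqref{form9} gives the factor $(-1)^k k!(n-m-k+1)_k$ directly. Second, your caution about propagating the sign from $h=k$ to general $h$ is well taken: the paper in fact only checks the phase at $h=k$ and then writes down \eqref{lambda} for all $h$ without further comment. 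Your idea of using an $SU(2)$-equivariant operator and Schur's lemma is sound in spirit, but note that $\bigl(x_2\partial_{x_1}+y_2\partial_{y_1}\bigr)^{m-k}$ links different values of $m$, not different weights $h$; to move in $h$ one needs the ladder operator $x_1\partial_{y_1}+x_2\partial_{y_2}$ on $V_{\frac{n-m}{2}}\otimes V_{\frac{m}{2}}$, which sends $\phi_{\frac{n}{2}-h}$ to a positive multiple of $\phi_{\frac{n}{2}-h-1}$, and one must check it also sends $\mathcal{X}_h\Phi(n,h,m,k)$ to a positive multiple of $\mathcal{X}_{h+1}\Phi(n,h+1,m,k)$. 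This last verification is a routine (if tedious) calculation with \eqref{Xhsigma}; once done, your argument closes the gap cleanly.
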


\begin{proof}
First of all, from Corollary \ref{spherical} and the definition of $\mathcal{X}_h$, we get 

\begin{equation}\label{XhPhi}
\begin{split}
\mathcal{X}_h\Phi(n,h,m,k)=&\sum_{w=\max\{0,-h+m\}}^{\min\{n-h,m\}}E_k(n-h,h,m,w)\binom{h}{m-w}\binom{n-h}{w}\times\\
&\times\frac{1}{\sqrt{\binom{n-m}{h-m+w}\binom{m}{w}}}\psi_{\frac{n+m}{2}-h-w}^{\frac{n-m}{2}}\otimes \psi_{w-\frac{m}{2}}^{\frac{m}{2}}.
\end{split}
\end{equation}

Moreover, Corollary \ref{Clebsch2} implies that $\mathcal{X}_h\Phi(n,h,m,k)\in V_{\frac{n}{2}-k}\cap\text{Im}\mathcal{X}_h\subseteq V_{\frac{n-m}{2}}\otimes V_{\frac{m}{2}}$ and therefore, since $\text{Im}\mathcal{X}_h$ corresponds to the $SU(1)$ character $a\mapsto a^{2h-n}$, there exists $\lambda\in\mathbb{C}$ such that: $\mathcal{X}_h\Phi(n,h,m,k)=\lambda\phi^{\frac{n-m}{2},\frac{m}{2},\frac{n}{2}-k}_{\frac{n}{2}-h}$. To compute $\lambda$, first observe that from \eqref{XhPhi} and the orthogonality relations \eqref{orthrel} it follows that

\[
\begin{split}
\lambda^2\equiv\lVert \mathcal{X}_h\Phi(n,h,m,k)\lVert^2_{V_{(n-m)/2}\otimes V_{m/2}}=&\frac{(n-k+1)!(h-k)!(n-h-k)!k!}{(n-2k+1)(m-k+1)_k(n-m-k+1)_k}\times\\
&\times\left[\frac{(h-k+1)_k(n-h-k+1)_k}{(m-k)!(n-m-k)!}\right]^2
\end{split}
\]

The positivity condition \eqref{positivity} is satisfied if and only if the coefficient of $\psi_{\frac{n-m}{2}}^{\frac{n-m}{2}}\otimes \psi_{\frac{m}{2}-k}^{\frac{m}{2}}$ in $\frac{1}{\lambda}\mathcal{X}_k\Phi(n,k,m,k)$ is positive. From the symmetry relation \eqref{form6} and from \eqref{form9} we get that this coefficient is a positive multiple of

\[
\frac{1}{\lambda}E_k(n-k,k,m,m-k)=\frac{1}{\lambda}E_k(n-m,m,k,0)=\frac{(-1)^k}{\lambda}k!(n-m-k+1)_k
\]

and therefore we must have 

\begin{equation}\label{lambda}
\begin{split}
\mathcal{X}_h\Phi(n,h,m,k)=&(-1)^k\sqrt{\frac{(n-k+1)!(h-k)!(n-h-k)!k!}{(n-2k+1)(m-k+1)_k(n-m-k+1)_k}}\times\\
&\times\frac{(h-k+1)_k(n-h-k+1)_k}{(m-k)!(n-m-k)!}\phi^{\frac{n-m}{2},\frac{m}{2},\frac{n}{2}-k}_{\frac{n}{2}-h}.
\end{split}
\end{equation}

Then \eqref{ClebschHahn} follows from \eqref{XhPhi} and \eqref{lambda}.

\end{proof}

We may also write \eqref{ClebschHahn} in the following form:

\begin{multline}\label{ClebschHahn2}
E_m(a,b,c,x)\\=\left[\frac{x!(c-x)!(b-c+x)!(a-x)!(a+b-m+1)!m!}{(c-m)!(a+b-c-m)!(a-m)!(b-m)!(a+b-2m+1)}\right]^{1/2}C^{\frac{b}{2},\frac{a}{2},\frac{a+b}{2}-m}_{\frac{b}{2}-c+x,\frac{a}{2}-x,\frac{a+b}{2}-c};
\end{multline}

we have used the identity $E_k(n-h,h,m,w)=(-1)^kE_k(m,n-m,h,m-w)$, which follow from \eqref{form5} and \eqref{form6}, to transform the right hand side of \eqref{ClebschHahn}.

\subsection{The reciprocity between $M^\mathbf{a}$ and $V_\mathbf{a}$}



Now set $Y=\{1,2,\dotsc,m\}$ and $Z=\{m+1,m+2,\dotsc,n\}$, as in the proof of Lemma \ref{RmkSnk}. Set $V_\mathbf{a}=V_{\frac{a_1}{2}}\otimes V_{\frac{a_2}{2}}\otimes \dotsb\otimes V_{\frac{a_h}{2}}$. Following the last paragraph of \cite{Ja2}, we define a linear operator $\mathcal{X}_m:M^\mathbf{a}\rightarrow V_\mathbf{a}$ by setting, if $\mathbf{A}\in\Omega_\mathbf{l}(Y)\times \Omega_{\mathbf{a}-\mathbf{l}}(Z)$, with  $\mathbf{l}=(l_1,l_2,\dotsc,l_h)\Vdash m$, $\mathbf{l}\leq \mathbf{a}$ (see \eqref{Omegaxa}) 

\[
\mathcal{X}_m\delta_\mathbf{A}=x_1^{l_1}y_1^{a_1-l_1}\otimes x_2^{l_2}y_2^{a_2-l_2}\otimes\dotsb\otimes x_h^{l_h}y_h^{a_h-l_h}.
\] 

In particular, 

\[
\mathcal{X}_m[\sigma_\mathbf{l}(Y)\otimes \sigma_{\mathbf{a}-\mathbf{l}}(Z)]=\binom{h}{l_1,\dotsc,l_h}\binom{n-h}{a_1-l_1,\dotsc,a_h-l_h}x_1^{l_1}y_1^{a_1-l_1}\otimes x_2^{l_2}y_2^{a_2-l_2}\otimes\dotsb\otimes x_h^{l_h}y_h^{a_h-l_h}.
\]

Clearly we have

\[
V_\mathbf{a} =\bigoplus_{m=0}^n\text{Im}\mathcal{X}_m
\]

and this is also the decomposition of $V_\mathbf{a}$ under the action of $SU(1)$: $\text{Im}\mathcal{X}_m$ corresponds to the character $a\mapsto a^{2m-n}$.

Now we sketch the construction reciprocal to \eqref{Trk}. We do not use the general results of James (that indeed might be used to study more general settings) and we base our considerations simply on the results in section \ref{recgelcle} and the tree method.  We define inductively a subspace $V(\mathcal{T}({\bf a}),\mathcal{T}_{\rm r}({\bf r}))\cong V_{\frac{n}{2}-k}$ of $V_{\bf a}$ as follows. Suppose that we have defined $V(\mathcal{T}'({\bf a}),\mathcal{T}'_{\rm r}({\bf r}))\cong V_{\frac{\underline{a}_t}{2}-i}$ and $V(\mathcal{T}''({\bf a}),\mathcal{T}''_{\rm r}({\bf r}))\cong V_{\frac{n-\underline{a}_t}{2}-j}$. Then $V(\mathcal{T}({\bf a}),\mathcal{T}_{\rm r}({\bf r}))$ is the subspace of $V(\mathcal{T}'({\bf a}),\mathcal{T}'_{\rm r}({\bf r}))\otimes V(\mathcal{T}''({\bf a}),\mathcal{T}''_{\rm r}({\bf r}))$ isomorphic to $V_{\frac{n}{2}-k}$. The basis of the induction (i.e. the case of a tree of height 1) is given by the Clebsch-Gordan decomposition. Clearly,

\[
V_{\bf a}=\bigoplus_{\mathcal{T}(\bf k)}V(\mathcal{T}({\bf a}),\mathcal{T}_{\rm r}({\bf r}))
\]

where the sum is over all representation labelings.
Then we define $\psi(\mathcal{T}({\bf a}),m,\mathcal{T}(\bf k))$ as the vector in $V(\mathcal{T}({\bf a}),\mathcal{T}_{\rm r}({\bf r}))$ corresponding to $\psi^{\frac{n}{2}-k}_{\frac{n}{2}-m}$. Since it is defined up to a multiplicative constant, we may suppose that it is given by a repeated application of \eqref{CGcoef}. That is, 

\[
\psi(\mathcal{T}({\bf a}),m,\mathcal{T}({\bf k}))=\sum^ {\min\{n-\underline{a}_t-j,m-i\}}_{\max\{j,i-\underline{a}_t+m\}}
C^{\frac{\underline{a}_t}{2}-i,\frac{n-\underline{a}_t}{2}-j,\frac{n}{2}-k}_{\frac{\underline{a}_t}{2}-m+w,\frac{n-\underline{a}_t}{2}-w,\frac{n}{2}-m}
\psi(\mathcal{T}'({\bf a}),m-w,\mathcal{T}'({\bf k}))\otimes \psi(\mathcal{T}''({\bf a}),w,\mathcal{T}''({\bf k})).
\]

Then an easy induction yields:

\begin{proposition}
\begin{enumerate}
\item
For every representation labeling $\mathcal{T}_{\rm r}(\bf k)$ we have:

\[
\bigoplus_{m=k}^{n-k}\mathcal{X}_m\left[S^{n-k,k}\right]_{\mathcal{T}(\bf k)}=V_{\mathcal{T}_{\rm r}(\bf k)}.
\]

\item
We have
$\mathcal{X}_m\Psi(\mathcal{T}({\bf a}),m,\mathcal{T}({\bf k}))=\lambda \psi(\mathcal{T}({\bf a}),m,\mathcal{T}(\bf k))$, where the constant $\lambda$
 can be recursively computed by:
 
\begin{multline*}
\lambda=\lambda'\lambda''\times\\
\times\left[\frac{(w-j)!(m-i-w)!(\underline{a}_t-m-i+w)!(n-\underline{a}_t-w-j)!(n-k-i-j+1)!(k-i-j)!}{(m-k)!(n-m-k)!(n-\underline{a}_t-k-j+i)!(\underline{a}_t-k-i+j)!(n-2k+1)}\right]^{1/2}
\end{multline*}

and $\lambda'$ and $\lambda''$ are the constants for $\mathcal{T}'$ and $\mathcal{T}''$.

\end{enumerate}
\end{proposition}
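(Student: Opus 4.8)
The plan is to prove both assertions simultaneously by induction on the height of $\mathcal{T}$, splitting $\mathcal{T}$ into its subtrees $\mathcal{T}'$ and $\mathcal{T}''$ exactly as in the recursive definitions of $\Psi$ and $\psi$. The base case is a tree of height one ($h=2$, so $i=j=0$): here $\left[S^{n-k,k}\right]_{\mathcal{T}_\text{\rm r}({\bf k})}$ is the single copy of $S^{n-k,k}$ in $M^{(a_1,a_2)}$, the space $V_{\mathcal{T}_\text{\rm r}({\bf k})}$ is by construction the Clebsch--Gordan summand $V_{\frac n2-k}$ of $V_{\frac{a_1}2}\otimes V_{\frac{a_2}2}$ in \eqref{Clebsch}, and both statements follow from Corollary \ref{Clebsch2} together with the Clebsch--Gordan/Hahn relation \eqref{ClebschHahn}, transported to the $\Psi$-side through the duality $\eta\mapsto\eta^\sharp$ of Lemma \ref{LemmaPsifun}.

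The mechanism driving the induction is the compatibility of $\mathcal{X}_m$ with the Mackey decomposition \eqref{Mackey2}. Since the multinomial coefficient attached by $\mathcal{X}_m$ to an orbit function $\sigma_{\bf x}(Y)\otimes\sigma_{{\bf a}-{\bf x}}(Z)$ factorizes across the cut at $t$, a direct computation shows that $\mathcal{X}_m$ carries $\Xi(\mathcal{T}({\bf a}),w,\mathcal{T}_\text{\rm r}({\bf k}))$ to $\binom{m}{m-w}\binom{n-m}{\underline a_t-m+w}\,\bigl(\mathcal{X}_{m-w}\Psi(\mathcal{T}'({\bf a}),m-w,\mathcal{T}'_\text{\rm r}({\bf k}))\bigr)\otimes\bigl(\mathcal{X}_w\Psi(\mathcal{T}''({\bf a}),w,\mathcal{T}''_\text{\rm r}({\bf k}))\bigr)$, the binomial factor counting the sets $F\in\Omega_{\underline a_t}$ with $|F\cap Y|=m-w$. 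By the inductive hypothesis this equals a scalar multiple of $\psi(\mathcal{T}'({\bf a}),m-w,\mathcal{T}'({\bf k}))\otimes\psi(\mathcal{T}''({\bf a}),w,\mathcal{T}''({\bf k}))$, so in particular $\mathcal{X}_m$ maps each $\Xi$ into $V_{{\bf a}'}\otimes V_{{\bf a}''}$. This yields part (1): applying $\mathcal{X}_m$ to $\left[S^{n-k,k}\right]_{\mathcal{T}_\text{\rm r}({\bf k})}$ and invoking the inductive hypothesis places $\bigoplus_m\mathcal{X}_m\left[S^{n-k,k}\right]_{\mathcal{T}_\text{\rm r}({\bf k})}$ inside $V_{\mathcal{T}'_\text{\rm r}({\bf k})}\otimes V_{\mathcal{T}''_\text{\rm r}({\bf k})}$; by the James form of Schur--Weyl reciprocity \cite{Ja2} this image is isomorphic to $V_{\frac n2-k}$, and since the Clebsch--Gordan decomposition \eqref{Clebsch} is multiplicity free it must be the unique such summand, namely $V_{\mathcal{T}_\text{\rm r}({\bf k})}$.

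For part (2) the existence of a single scalar $\lambda$ is a consequence of part (1): the operator $\mathcal{X}_m$ maps $\Psi(\mathcal{T}({\bf a}),m,\mathcal{T}_\text{\rm r}({\bf k}))$, which lies in $\left[S^{n-k,k}\right]_{\mathcal{T}_\text{\rm r}({\bf k})}$, into $V_{\mathcal{T}_\text{\rm r}({\bf k})}\cap\text{Im}\,\mathcal{X}_m$, and since $\text{Im}\,\mathcal{X}_m$ is the weight space of the character $a\mapsto a^{2m-n}$ inside the irreducible $V_{\frac n2-k}$, this intersection is the line spanned by $\psi(\mathcal{T}({\bf a}),m,\mathcal{T}({\bf k}))$. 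To evaluate $\lambda$ I would substitute the recursion $\Psi=\sum_w E_{k-i-j}(\dots)\,\Xi(\mathcal{T}({\bf a}),w,\mathcal{T}_\text{\rm r}({\bf k}))$, apply the factorization of the previous paragraph together with the inductive constants $\lambda',\lambda''$, and compare the coefficient of $\psi'\otimes\psi''$ with the one produced by the recursive definition of $\psi$ through \eqref{CGcoef}. Fixing one value of $w$ then expresses $\lambda$ as the ratio of a Hahn polynomial (times the binomial and normalization factors) to the Clebsch--Gordan coefficient $C^{\frac{\underline a_t}2-i,\frac{n-\underline a_t}2-j,\frac n2-k}_{\frac{\underline a_t}2-m+w,\frac{n-\underline a_t}2-w,\frac n2-m}$.

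The final step, which is the main obstacle, is to turn this ratio into the stated closed form. One writes the Clebsch--Gordan coefficient through \eqref{ClebschHahn}; with the three spins $\tfrac{\underline a_t}2-i,\ \tfrac{n-\underline a_t}2-j,\ \tfrac n2-k$ the resulting Hahn polynomial has order exactly $k-i-j$, but its arguments differ from those of the polynomial appearing on the $\Psi$-side, so one invokes the Regge symmetries \eqref{form5} and \eqref{form6} of Lemma \ref{Regge} to identify the two up to the sign $(-1)^{k-i-j}$. After this identification the Hahn factors cancel and what remains is a product of factorials and binomials, together with the square-root prefactor of \eqref{ClebschHahn}; collecting these and checking that the residual $w$-dependence is absorbed into $\lambda'$ and $\lambda''$ (so that $\lambda$ is indeed constant, as guaranteed a priori by the weight-space argument) produces the bracketed expression in the statement. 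This factorial reconciliation, rather than any conceptual point, is where the real work lies.
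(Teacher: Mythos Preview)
Your proposal is correct and is exactly the induction the paper has in mind---its own proof is the single phrase ``an easy induction yields''---so your factorisation of $\mathcal{X}_m$ through the Mackey decomposition \eqref{Mackey2} and comparison of the Hahn recursion for $\Psi$ with the Clebsch--Gordan recursion for $\psi$ simply makes that induction explicit. One minor simplification: the Regge step is unnecessary if you use \eqref{ClebschHahn2} rather than \eqref{ClebschHahn}, since with the substitutions $a=n-\underline a_t-2j$, $b=\underline a_t-2i$, $c=m-i-j$, $x=w-j$ and $m\to k-i-j$ it expresses the Clebsch--Gordan coefficient in the $\psi$-recursion directly as a multiple of the very polynomial $E_{k-i-j}(n-\underline a_t-2j,\underline a_t-2i,m-i-j,w-j)$ appearing in the $\Psi$-recursion, so the Hahn factors cancel without any further symmetry.
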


This Proposition shows that the tree method for the $S_{n-m}\times S_m$-intertwining functions in $M^{\bf a}$ is equivalent to the tree method for the Wigner's coefficients (or $3nj$-coefficients) for $SU(2)$. For the latter, we refer to \cite{BL,JLV,VdJ}.

\qquad\\
\qquad\\
\noindent
FABIO SCARABOTTI, Dipartimento MeMoMat, Universit\`a degli Studi di Roma ``La Sapienza'', via A. Scarpa 8, 00161 Roma (Italy)\\
{\it e-mail:} {\tt scarabot@dmmm.uniroma1.it}\\

\end{document}